\newtheorem{Theorem}{Theorem}[section]
\newtheorem{Proposition}{Proposition}[section]
\newtheorem{Lemma}{Lemma}[section]
\theoremstyle{definition}
\newtheorem{Definition}{Definition}[section]
\newtheorem{Assumptions}{Hypothesis}[section]
\def\R{{\mathbb{R}}}
\def\cU{{\mathcal{U}}}
\def\t\cU{{\widetilde{{\mathcal{U}}}}}
\newcommand\norm[1]{\left\lVert#1\right\rVert}
\title {Stability for some classes of degenerate nonlinear hyperbolic equations with time delay}
\author{{{\sc Alessandro Camasta}}\\
	Department of Mathematics\\ University of Bari Aldo Moro\\
	Via
	E. Orabona 4\\ 70125 Bari - Italy\\ e-mail: alessandro.camasta@uniba.it\\
	{\sc Genni Fragnelli}\\
	Department of Ecology and Biology\\ Tuscia University\\
	Largo dell'Universit\`a, 01100 Viterbo - Italy\\ e-mail: genni.fragnelli@unitus.it\\
	{\sc Cristina Pignotti}\\
Dipartimento di Ingegneria e Scienze dell'Informazione e Matematica\\ Universit\`a di L'Aquila\\
Via Vetoio, Loc. Coppito,  67100 L'Aquila - Italy\\ e-mail: cristina.pignotti@univaq.it
}
\begin{document}
	
	\maketitle
	\begin{abstract}
		We consider several classes of degenerate hyperbolic equations involving delay terms and
suitable nonlinearities. The idea is to rewrite the problems in an abstract way and, using semigroup theory and energy method, we study well posedness and stability. Moreover, some illustrative examples are given.
	\end{abstract}
	\noindent 
	Keywords: fourth order
degenerate operator, second order degenerate operator, operator in divergence or in non divergence form, exponential stability, nonlinear equation, time delay.
\vspace{0.2cm}

\noindent 2000AMS Subject Classification: 35L80, 93D23, 93D15, 93B05, 93B07

	\section{Introduction}
In this paper, we study well-posedness and  stability for the following degenerate problems with time delay and  nonlinear source:	
\begin{equation}\label{P0}
	\begin{cases}
		y_{tt}(t,x)-A_iy(t,x)+k(t)BB^*y_t(t-\tau,x)=f(y(t,x)), &(t,x)\in Q\\
		y(0,x)=y^0(x),\,\,y_t(0,x)=y^1(x),&x\in(0,1),\\
		\mathcal B_iy(t,0)=0, & t >0,\\
		\mathcal C_iy(t,1)=0, & t>0,\\
		B^*y_t(s,x)=g(s), &s\in [-\tau,0],
	\end{cases}
\end{equation}
$i =1,2,3,4$.
Here $Q:= (0, +\infty) \times (0,1),$ $\mathcal B_iy(t,0)=0$ and $\mathcal C_iy(t,1)=0$ are suitable boundary conditions related to the operators $A_i$, $i =1,2,3,4$.
In particular,
\[
A_iy:= \begin{cases}
	-a(x)y_{xxxx}(t,x), &i=1,\\
	-(ay_{xx})_{xx}(t,x), &i=2,\\
	a(x)y_{xx}(t,x), &i=3,\\
	(ay_{x})_x(t,x), &i=4,
\end{cases}
\]
\[
\mathcal B_iy(t,0):=\begin{cases} y(t,0)=y_x(t,0)=0, &i=1,\\
	y(t,0)= \begin{cases}
		y_x(t,0)=0, &\text{ if } a \text{ is (WD)},\\
		(ay_{xx})(t,0)=0, &\text{ if } a \text{ is (SD)},\\ 
	\end{cases} &i=2,\\
	y(t,0)=0, & i=3,\\
	\begin{cases}
		y(t,0)=0, &\text{ if } a \text{ is (WD)},\\
		\lim_{x\to 0}(ay_{x})(t,x)=0, &\text{ if } a \text{ is (SD)},\\ 
	\end{cases} &i=4
\end{cases}
\]
and
\[
\mathcal C_iy(t,1):=  \begin{cases} \begin{cases} \beta y(t,1)-y_{xxx}(t,1)+y_t(t,1)=0, \\
		\gamma y_x(t,1)+y_{xx}(t,1)+y_{tx}(t,1)=0,
	\end{cases} &i=1,\\
	\begin{cases}
		\beta y(t,1)-(ay_{xx})_x(t,1)+y_t(t,1)=0,\\
		\gamma y_x(t,1)+(ay_{xx})(t,1)+y_{tx}(t,1)=0, \\
	\end{cases} &i=2,\\
	\beta y(t,1)+y_{x}(t,1)+y_t(t,1)=0, & i=3,4.\\
\end{cases}
\]
The bounded linear operator $B$ that appears in \eqref{P0} is defined on a real Hilbert space with adjoint $B^*$ and $f$ is a nonlinear function satisfying suitable hypotheses that will be specified in the next sections. 
Moreover,  $\tau >0$ is the time delay, $\beta$ and $\gamma$ are nonnegative constants, $y^0, y^1,g$ are initial data given in suitable spaces, the damping coefficient $k$ belongs to $L^1_{loc}([-\tau,+\infty))$
and satisfies
\begin{equation*}
	\int_{t-\tau}^{t}|k(s)|ds\le \Lambda, \,\,\,\,\,\,\,\,\forall\; t\ge 0,
\end{equation*}
for some $\Lambda>0$.  The main feature in these problems is that the coefficient $a$ is a positive function that degenerates at $x = 0$ according to the following definitions:
\begin{Definition}
	A real function $g$ is {\it weakly degenerate at $0$}, $(WD)$ for short, if $g\in\mathcal{C}[0,1]\cap\mathcal{C}^1(0,1]$, $g(0)=0$, $g>0$ on $(0,1]$ and if
	\begin{equation}\label{sup}
		K:=\sup_{x\in (0,1]}\frac{x|g'(x)|}{g(x)},
	\end{equation}
	then $K\in (0,1)$.
\end{Definition}
\begin{Definition}
	A real function $g$ is {\it strongly degenerate at $0$}, $(SD)$ for short,  if $g\in\mathcal{C}^1[0,1]$, $g(0)=0$, $g>0$ on $(0,1]$ and in \eqref{sup} we have $K\in [1,2)$.
\end{Definition}

As a consequence, classical methods cannot be used directly to study such problems and a different approach is needed. 

Degenerate differential equations similar to \eqref{P0} have attracted a lot of attention during the last decades, mainly because they allow us to give accurate descriptions of several complex phenomena in numerous fields of science, especially in biology and engineering. Indeed, some interdisciplinary applications can be described by degenerate equations which provide an excellent instrument for the description of the properties of different processes. For parabolic degenerate problems the pioneering papers are \cite{acf}, \cite{cfr1}, \cite{cfv}, \cite{cmv}, \cite{fmANONA}, \cite{mv}, \cite{mrv} (see also \cite{fmbook} and the references therein); for hyperbolic degenerate problems  the most important paper is \cite{ACL} (see also the arxiv version of 2015), where a general degenerate function is considered (see also \cite{g}, \cite{zhang}, and the references mentioned within), and \cite{bfm} for the non divergence case (see also \cite{FM}). On the other hand, for degenerate beam problems the first results can be found in \cite{CF_Beam}, \cite{CF_Stabil} and \cite{CF_ConStab}.
However, it is important to underline that in all the previous papers there is not a delay term and the equations are linear, except for \cite{cfv} where there is a semilinear term.

Indeed, it is well known that time delay effects always exist in real systems, which may be caused, e.g., by computation of control forces. Time delay arises in many biological and physical applications and leads to a subclass of differential equations in which the derivative of the unknown function at a certain time is given in terms of the values of the function at previous times, i.e. delay differential equations. Since time delay may destroy stability (see \cite{11 datko}, \cite{12 datko1}), even if it is small, the stabilization problem of equations with delay terms becomes a very important topic.

More physical justifications together with some mathematical results on well-posedness and stability of solutions for this kind of equations are discussed, e.g., in \cite{FP}, \cite{13 NP}, for wave equations, and in \cite{Feng}, \cite{15 raposo}, \cite{17 }, \cite{18 }, \cite{16 } (see also the references therein) for beam equations. 

Here, we apply to the degenerate case a powerful tool introduced in \cite{14 NP1} and then generalized in  \cite{KP}, to analyze a class of abstract evolution equations in the presence of a  time delay when the related undelayed system is uniformly exponentially stable. Such techniques have then been refined to deal with semilinear equations with time dely \cite{PaoPi} (cf. \cite{CP} for the case of time variable time delay).

Indeed, a common feature in many delayed equations coming from applications is  the presence of a nonlinear term. There are a lot of papers dealing with second or fourth-order hyperbolic equations with suitable nonlinear sources.  For example, referring  to second-order problems,  in \cite{FP}, \cite{PaoPi} and \cite{CP}  the authors study the stability of solutions for some abstract wave equations incorporating a nonlinear term of gradient type satisfying a local Lipschitz condition. In concrete applications, such as the theory of elasticity and viscoelasticity, this nonlinear term gives rise to a nonlinear source term (see, e.g., \cite{ACS} and \cite{8 berrimi}, \cite{luo xiao} where some decay estimates are proved). 
If we keep in mind the fourth order case, then we can consider, for example, \cite{1 yang}  where the authors consider the nonlinear extensible beam equation 
\begin{equation}\label{1}
	y_{tt}+\Delta^2y-M(\norm{\nabla y}^2_{L^2(\Omega)})\Delta y-\Delta y_t+|y_t|^{m-1}y_t=|y|^{p-1}y \text{ in } \Omega\times (0,T)
\end{equation}
in order to study the evolution of the transverse deflection of an extensible beam derived from the connection mechanics. In \eqref{1} $\Omega$ is a bounded domain of $\mathbb{R}^N$, $N>1$, with smooth boundary $\partial\Omega$, $m\ge 1$, $M(s):=1+\beta s^{\gamma}$, $\gamma\ge 0$, $s\ge 0$ and $\beta, \gamma, p$ satisfy suitable hypotheses. The terms appearing in the previous equation possess a precise physical meaning; more precisely, the nonlinear term $M(\norm{\nabla y}^2_{L^2(\Omega)})\Delta y$ represents the extensibility effects on the beam, the dissipative terms $\Delta y_t$ and $|y_t|^{m-1}y_t$ represent the friction force and the nonlinear source term $|y|^{p-1}y$ represents the external load distribution. Another example is given in \cite{Radul}, where the authors make specific assumptions on the generalized source term $f(y)$; in particular, $f$ is of class $\mathcal{C}^1$, $f(0)=f'(0)=0$, $f(y)$ is monotone and convex for $y>0$, concave for $y<0$ and it satisfies suitable estimates together with $F(y)=\int_0^yf(s)ds$. In this perspective other important contributions on nonlinear beam equations with source terms are given in \cite{19 chen},  \cite{21 }, \cite{23 messaoudi}, \cite{20 }, \cite{22 } and in \cite{24 log}, where a logarithmic nonlinearity source in the right-hand side of the equation is considered. We recall also \cite{2 bainov}, where the authors give sufficient conditions for the non-existence of smooth solutions with negative initial energy,  \cite{3 park}, where existence, uniqueness, and uniform convergence of solutions are addressed,  and \cite{4 wu}, where the authors show that the local solutions blow up in a finite time with positive subcritical initial energy improving the results obtained in \cite{5 ono}.

Well posedness and stability analysis for beam problems with nonlinear source terms are treated employing different techniques. This is the case of \cite{9 ackleh}, where existence and uniqueness of weak solutions to a nonlinear beam equation are established under relaxed assumptions (locally Lipschitz plus affine domination) on the nonlinearity. In \cite{10 cavalcanti} the authors use a fixed point method and a continuity argument to establish global existence and asymptotic stability for a beam problem with nonlinear source term (see also  \cite{28}, \cite{25 pereira}, \cite{26}, \cite{27}). In \cite{luo xiao} the authors focus the attention on a second order non-autonomous hyperbolic equation in an abstract Hilbert space and apply the obtained results to Neumann or Dirichlet problems for non-autonomous, semilinear wave equations. More precisely, they consider two concrete models in a bounded domain $\Omega\subset\mathbb{R}^N, N\in\mathbb{N}$: the first one is concerned with a nonlinear source term of the type $|y(t,x)|^py(t,x)$, where $p>0$ is a positive exponent, with no further restrictions if $N=1,2$, and $p\le \frac{2}{N-2}$ if $N\ge 3$; in the second application they discuss an integro-differential wave equation with a nonlinearity of the form $(\int_{\Omega}|y(t,x)|^2dx)^{\frac{p}{2}}y(t,x), p\ge 1$. Finally, in \cite{29} the authors consider more general nonlinear source terms giving vacuum isolating phenomena of the solution and extending at the same time the results of \cite{28}.

However, in all the previous papers the studied equations are always \textit{non degenerate}, in the sense that they do not take into account a leading fourth order (or a second order) operator affected by a coefficient which degenerates somewhere in the space domain (according to the definitions given above). 

To our best knowledge, the case of degenerate wave or beam type equations with delay and suitable nonlinearities is never considered in literature.  For this reason, the aim of this work is to fill this gap; thus this is the first paper where  \textit{well posedness} and \textit{stability} for a {\it degenerate} wave or beam equation with \textit{time delay} and \textit{nonlinear source terms} are studied.
Our arguments extend to the new functional setting the method introduced in \cite{PaoPi} to deal with semilinear wave-type equations with viscoelastic damping and delay feedback. Here, the presence of the degeneracy requires a more careful analysis in the technical preliminary lemmas (see Lemma 
\ref{Lemma 3.1} and Proposition \ref{Prop E} below). Furthermore, an appropriate energy has to be defined in order to deal with the degeneracy, the delay feedback and the boundary conditions. However, with respect to \cite{PaoPi} we give a simpler proof by shortening some step by step procedure   (cf. \cite{CP} for the time-dependent time delay case).

The paper is organized as follows. In Section \ref{sezione2} we recall some useful results on the linear undelayed equation governed by a fourth order degenerate operator in non divergence form. Thanks to these results, in Section \ref{section} we deduce well posedness and stability for the delayed nonlinear problem writing it in an abstract way and using the Duhamel formula. In Section \ref{Sez 4} some illustrative examples are given. Finally, in Section \ref{Sez 5} we extend the  results obtained in Section \ref{section} to problems governed by a degenerate fourth order operator in divergence form or by a degenerate second order operator in divergence or in non divergence form.

	\section{The linear undelayed problem}\label{sezione2}
	In this section we consider  the undelayed problem
	\begin{equation}\label{(P_undelay)}
		\begin{cases}
			y_{tt}(t,x)+a(x)y_{xxxx}(t,x)=0, &(t,x)\in Q,\\
			y(t,0)=0,\,\,y_x(t,0)=0, &t>0,\\
			\beta y(t,1)-y_{xxx}(t,1)+y_t(t,1)=0, &t >0,\\
			\gamma y_x(t,1)+y_{xx}(t,1)+y_{tx}(t,1)=0, &t >0,\\
			y(0,x)=y^0(x),\,\,y_t(0,x)=y^1(x),&x\in(0,1),
		\end{cases}
	\end{equation}
	where	$Q:=(0,+\infty) \times (0,1)$ and $\beta, \gamma \ge 0$. In particular, following \cite{CF_Stabil}, we present some functional spaces and some results crucial for the following.
		
	As in  \cite{CF}, \cite{CF_Neumann}, \cite{CF_Wentzell} or \cite{CF_Beam}, let us consider the following weighted Hilbert spaces with the related inner products:
	\begin{equation*}
		L^2_{\frac{1}{a}}(0, 1):=\biggl \{u\in L^2(0, 1):\int_{0}^{1}\frac{u^2}{a}\,dx<+\infty \biggr \},
		\end{equation*}
		\begin{equation*}	\left\langle u,v\right\rangle_{L^2_{\frac{1}{a}}(0,1)}:=\int_0^1\frac{uv}{a}dx,
	\end{equation*}
	for every $u,v \in L^2_{\frac{1}{a}}(0,1)$, and
	\begin{equation*}
		H^i_{\frac{1}{a}}(0, 1):= L^2_{\frac{1}{a}}(0, 1)\cap H^i(0, 1),
			\end{equation*}
		\begin{equation*} \langle u,v\rangle_{H^i_{\frac{1}{a}}(0,1)}:=\langle u,v\rangle_{L^2_{\frac{1}{a}}(0,1)}+\sum_{k=1}^{i}\langle u^{(k)}, v^{(k)}\rangle_{L^2(0,1)},
	\end{equation*}
	for every $u,v \in 	H^i_{\frac{1}{a}}(0, 1)$, $i=1,2$. Obviously, the previous inner products induce the related respective norms
	\begin{equation*}
		\norm{u}^2_{L^2_{\frac{1}{a}}(0, 1)}:= \int_{0}^{1}\frac{u^2}{a}\,dx,\quad\,\,\,\,\,\,\,\forall\,u\in L^2_{\frac{1}{a}}(0, 1)
	\end{equation*}
	and
	\begin{equation*}
		\norm{u}_{H^i_{\frac{1}{a}}(0, 1)}^2:=\norm{u}^2_{L^2_{\frac{1}{a}}(0, 1)} + \sum_{k=1}^{i}\Vert u^{(k)}\Vert^2_{L^2(0, 1)}, \quad\,\,\,\,\,\,\,\forall\,u\in H^i_{\frac{1}{a}}(0, 1),
	\end{equation*}
	$i=1,2$.  	Observe that  $\|\cdot\|_{H^2_{\frac{1}{a}}(0, 1)}$ is equivalent to $\|\cdot\|_2$ in $H^2_{\frac{1}{a}}(0, 1)$, where
	\[
	\| u \|_2^2:= \|u\|_{L^2_{\frac{1}{a}}(0, 1)}^2+ \|u''\|_{L^2(0,1)}^2,
	\]
	for all $u \in H^2_{\frac{1}{a}}(0, 1)$
	(see, e.g., \cite{CF_Wentzell}).  In addition to the previous Hilbert spaces, we introduce the following ones:
	\begin{equation*}
		H^1_{\frac{1}{a},0}(0, 1):= \biggl \{u\in H^1_{\frac{1}{a}}(0, 1):u(0)=0 \biggr \},	\end{equation*}
	\begin{equation*}
		H^2_{\frac{1}{a},0}(0, 1):= \biggl \{u\in H^1_{\frac{1}{a},0}(0, 1)\cap H^2(0,1):u'(0)=0 \biggr \},
	\end{equation*}
	with norms $\|\cdot\|_{H^i_{\frac{1}{a}}(0, 1)}$, $i=1,2$. Assuming  that $a$ is (WD) or (SD), one can prove that  $\|\cdot\|_{H^2_{\frac{1}{a}}(0, 1)}$ and $\|\cdot\|_2$ are equivalent to the next one
	\begin{equation}\label{stimanorme}
	\|u\|_{2, \sim }:= \|u''\|_{L^2(0,1)}, \quad \forall\, u\in H^2_{\frac{1}{a},0}(0, 1),
	\end{equation}
	 (see \cite{CF_Stabil}).  Indeed, as proved in \cite[Proposition 2.1]{CF_Stabil}, 
	\begin{equation}\label{stimanormeequi}
	\|u\|_2^2 \le (4C_{HP}+1) \|u\|^2_{2,\sim}, \quad \forall \; u \in H^2_{\frac{1}{a},0}(0, 1),
	\end{equation}
	where $C_{HP}$ is the best constant of the Hardy-Poincar\'e inequality introduced in \cite[Proposition 2.6]{cfr1}. 	\noindent	Finally, we introduce the last important Hilbert space:
	\begin{equation*}
		\mathcal{H}_0:=H^2_{\frac{1}{a},0}(0,1)\times L^2_{\frac{1}{a}}(0,1),
	\end{equation*}
	endowed with inner product
	\begin{equation*}
		\langle (u,v),(\tilde{u},\tilde{v})\rangle_{\mathcal{H}_0}:=\int_{0}^{1}u''\tilde{u}''dx+\int_{0}^{1}\frac{v\tilde{v}}{a}dx+\beta u(1)\tilde{u}(1)+\gamma u'(1)\tilde{u}'(1)
	\end{equation*}
	and with  norm
	\begin{equation*}
		\|(u,v)\|^2_{\mathcal{H}_0}:=\int_{0}^{1}(u'')^2dx+\int_{0}^{1}\frac{v^2}{a}dx+\beta u^2(1)+\gamma (u'(1))^2
	\end{equation*}
	for every $(u,v), \;(\tilde{u},\tilde{v})\in\mathcal{H}_0$. 
On $\mathcal H_0$ we can define the matrix operator 
$\mathcal{A}_{nd}:D(\mathcal{A}_{nd})\subset\mathcal{H}_0\to \mathcal{H}_0$ given by
	\begin{equation}\label{matrix A}
		\mathcal{A}_{nd}:=\begin{pmatrix}
			0 & Id \\
			-A_{nd} & 0
		\end{pmatrix}
	\end{equation}
	with domain
	\begin{equation*}
		\begin{aligned}
			D(\mathcal{A}_{nd}):=\{(u,v)\in D(A_{nd})\times H^2_{\frac{1}{a},0}(0,1): &\beta u(1)-u'''(1)+v(1)=0,\\ &\gamma u'(1)+u''(1)+v'(1)=0 \},
		\end{aligned}
	\end{equation*}
where
	\begin{equation*}
		A_{nd}u:=au'''' \quad \text{for all } u \in
		D(A_{nd}):=\left\{u\in H^2_{\frac{1}{a},0}(0, 1): au''''\in L^2_{\frac{1}{a}}(0, 1) \right\}.
	\end{equation*}	Using the previous spaces, one can prove the following Gauss Green formula	\begin{equation}\label{GF1}
		\int_0^1 u''''v dx= u'''(1)v(1)-u''(1)v'(1) +\int_0^1 u''v''dx
	\end{equation}
	for all $(u,v) \in D(A_{nd}) \times H^2_{\frac{1}{a},0}(0, 1)$.
	Thanks to \eqref{GF1} one can prove that $(\mathcal A_{nd}, D(\mathcal A_{nd}))$ is non positive with dense domain and generates a contraction semigroup  $(S(t))_{t \ge 0}$ (see \cite{CF_Stabil}) as soon as
		 $a$ is (WD) or (SD). Moreover, as in \cite{CF_Beam} or \cite{CF_Stabil}, one can prove the following well posedness result:
	\begin{Theorem}\label{Theorem regol}
		Assume $a$ (WD) or (SD).
		If $(y^0,y^1)\in\mathcal{H}_0$, then there exists a unique mild solution
		\begin{equation*}
			y\in \mathcal{C}^1([0,+\infty);L^2_{\frac{1}{a}}(0,1))\cap \mathcal{C}([0,+\infty);H^2_{\frac{1}{a},0}(0,1))
		\end{equation*}
		of \eqref{(P_undelay)} which depends continuously on the initial data $(y^0,y^1)\in \mathcal{H}_0$. Moreover, if $(y^0,y^1)\in D(\mathcal{A}_{nd})$, then the solution $y$ is classical, in the sense that
	\begin{equation*}
		y\in \mathcal{C}^2([0,+\infty);L^2_{\frac{1}{a}}(0,1))\cap \mathcal{C}^1([0,+\infty);H^2_{\frac{1}{a},0}(0,1))\cap \mathcal{C}([0,+\infty);D(A_{nd}))
	\end{equation*}
		and the equation of \eqref{(P_undelay)} holds for all $t\ge 0$.
	\end{Theorem}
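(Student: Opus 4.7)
The plan is to reformulate \eqref{(P_undelay)} as an abstract Cauchy problem on the Hilbert space $\mathcal{H}_0$ and then invoke the standard theory of linear $C_0$-semigroups. Setting $U(t)=(y(t,\cdot),y_t(t,\cdot))^T$ and $U_0=(y^0,y^1)$, the system \eqref{(P_undelay)} rewrites as
\begin{equation*}
U'(t)=\mathcal{A}_{nd}U(t),\qquad U(0)=U_0,
\end{equation*}
where the boundary conditions $\beta y(t,1)-y_{xxx}(t,1)+y_t(t,1)=0$ and $\gamma y_x(t,1)+y_{xx}(t,1)+y_{tx}(t,1)=0$ are encoded in the definition of $D(\mathcal{A}_{nd})$, while the Dirichlet conditions at $x=0$ are built into $H^2_{\frac{1}{a},0}(0,1)$.

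The first step is to confirm the equivalence between the PDE formulation and the abstract one: any $U=(u,v)\in D(\mathcal{A}_{nd})$ automatically carries all four boundary conditions and the degenerate regularity $au''''\in L^2_{\frac{1}{a}}(0,1)$. Once this is in place, I would invoke what is already stated in the excerpt (from \cite{CF_Stabil}): under the (WD) or (SD) assumption on $a$, the operator $(\mathcal{A}_{nd},D(\mathcal{A}_{nd}))$ is densely defined, dissipative in $\mathcal{H}_0$, and satisfies the range condition, so by Lumer--Phillips it generates a contraction semigroup $(S(t))_{t\ge 0}$. The dissipativity is verified via a direct computation using the Gauss--Green identity \eqref{GF1} together with the boundary conditions, which makes the boundary terms from $y_t(t,1)$ and $y_{tx}(t,1)$ combine into $-(v(1))^2-(v'(1))^2\le 0$. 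The range condition reduces, after eliminating $v=\lambda u-f$, to solving $\lambda^2 u/a+u''''=(g+\lambda f)/a$ in $H^2_{\frac{1}{a},0}(0,1)$, which is handled by Lax--Milgram on the coercive bilinear form $\int_0^1 u''\varphi''\,dx+\lambda^2\int_0^1 u\varphi/a\,dx+\beta u(1)\varphi(1)+\gamma u'(1)\varphi'(1)$, using the equivalence of norms \eqref{stimanormeequi} coming from the Hardy--Poincar\'e inequality.

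Granting the generation result, the conclusion is immediate. For $U_0\in\mathcal{H}_0$ the mild solution $U(t)=S(t)U_0$ belongs to $\mathcal{C}([0,+\infty);\mathcal{H}_0)$, which unfolds componentwise into $y\in\mathcal{C}([0,+\infty);H^2_{\frac{1}{a},0}(0,1))$ and $y_t\in\mathcal{C}([0,+\infty);L^2_{\frac{1}{a}}(0,1))$, the latter giving $y\in\mathcal{C}^1([0,+\infty);L^2_{\frac{1}{a}}(0,1))$. The continuous dependence on initial data follows from $\|S(t)\|_{\mathcal{L}(\mathcal{H}_0)}\le 1$. If $U_0\in D(\mathcal{A}_{nd})$, then the standard regularity of semigroups yields $U\in\mathcal{C}^1([0,+\infty);\mathcal{H}_0)\cap \mathcal{C}([0,+\infty);D(\mathcal{A}_{nd}))$, which translates exactly into $y\in\mathcal{C}^2([0,+\infty);L^2_{\frac{1}{a}}(0,1))\cap\mathcal{C}^1([0,+\infty);H^2_{\frac{1}{a},0}(0,1))\cap\mathcal{C}([0,+\infty);D(A_{nd}))$, and the identity $U'(t)=\mathcal{A}_{nd}U(t)$ gives pointwise the equation in \eqref{(P_undelay)} for every $t\ge 0$.

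The main obstacle is really hidden in the generation step: verifying the range condition requires the weighted Lax--Milgram argument, where one must exploit precisely the (WD)/(SD) hypothesis to control $\int_0^1 u^2/a\,dx$ by $\|u''\|_{L^2}^2$ via \eqref{stimanormeequi}; without the Hardy--Poincar\'e estimate the coercivity of the associated bilinear form would fail in the degenerate weighted setting. Since this step is carried out in \cite{CF_Stabil}, the present proof reduces to the abstract reformulation and the semigroup bookkeeping described above.
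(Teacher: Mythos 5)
Your proposal follows essentially the same route as the paper: both reduce \eqref{(P_undelay)} to the abstract Cauchy problem $U'=\mathcal{A}_{nd}U$ on $\mathcal{H}_0$, obtain generation of a contraction semigroup from the Gauss--Green formula \eqref{GF1} and the results of \cite{CF_Stabil} (dissipativity plus the range condition via the Hardy--Poincar\'e--based norm equivalence \eqref{stimanormeequi}), and then read off mild and classical solutions from standard $C_0$-semigroup theory. The paper simply cites \cite{CF_Beam} and \cite{CF_Stabil} for these steps rather than spelling them out, so your more detailed sketch is consistent and correct.
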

		
	Hence, thanks to the previous result, if  $a$ is (WD) or (SD), then there exists a unique mild solution $y$  of (\ref{(P_undelay)}) and we can define its energy as
		\begin{equation*}
			\mathcal{E}_y(t):=\frac{1}{2}\int_0^1 \Biggl (\frac{y^2_t(t,x)}{a(x)}+y^2_{xx}(t,x) \Biggr )dx+\frac{\beta}{2}y^2(t,1)+\frac{\gamma}{2}y_x^2(t,1),\quad\,\,\,\,\,\,\forall\;t\ge 0,
		\end{equation*}
		where $\beta,\gamma \ge 0$. In addition, if $y$ is classical, then the energy is non increasing and		\[
		\frac{d\mathcal{E}_y(t)}{dt}=-y^2_t(t,1)-y^2_{tx}(t,1),\quad\,\,\,\,\,\,\,\forall\;t\ge 0.
		\]
In particular the following stability result holds.
	
	\begin{Theorem}\label{teoremaprincipale}\cite[Theorem 3.2]{CF_Stabil}
		Assume $a$ (WD) or (SD) and let $y$ be a mild solution of \eqref{(P_undelay)}. Then, there exists a suitable constant $T_0>0$ such that 		\begin{equation*}
			\mathcal{E}_y(t)\le \mathcal{E}_y(0)e^{1-\frac{t}{T_0}},
		\end{equation*}
		for all $t\ge T_0$.	\end{Theorem}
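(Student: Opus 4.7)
By density it suffices to treat classical solutions, i.e.\ $(y^0,y^1)\in D(\mathcal{A}_{nd})$: the identity
\[
\frac{d\mathcal{E}_y(t)}{dt}=-y^2_t(t,1)-y^2_{tx}(t,1)
\]
already gives monotonicity of $\mathcal{E}_y$ along classical trajectories, and any integral inequality deduced below will extend to mild solutions by continuous dependence on initial data.

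The heart of the argument is a multiplier estimate. I would multiply $y_{tt}+ay_{xxxx}=0$ by $\varphi(x)y_x/a(x)$ with $\varphi(x)=x$ (or a $C^{1}$-truncation behaving like $x$ near the origin, chosen so that $\varphi/a$ remains integrable in the (SD) case as well) and integrate over $(0,T)\times(0,1)$. Two successive applications of the Gauss--Green formula \eqref{GF1}, together with the boundary conditions at $x=0$ (which annihilate the traces of $y$ and $y_x$) and at $x=1$ (which let one substitute $y_{xxx}(t,1)=\beta y(t,1)+y_t(t,1)$ and $y_{xx}(t,1)=-\gamma y_x(t,1)-y_{tx}(t,1)$), should yield an observability-type inequality
\[
\int_0^T\mathcal{E}_y(t)\,dt\le C_1\bigl(\mathcal{E}_y(0)+\mathcal{E}_y(T)\bigr)+C_2\int_0^T\bigl(y^2_t(t,1)+y^2_{tx}(t,1)\bigr)dt,
\]
the constants $C_1,C_2$ depending only on $K$, $\beta$, $\gamma$ and on the Hardy--Poincar\'e constant $C_{HP}$. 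The norm equivalence \eqref{stimanormeequi} is essential here: it converts the weighted $L^2_{1/a}$ contributions of $y$ and $y_t$ arising from the computation into $\|y_{xx}\|^2_{L^2(0,1)}$ terms and boundary energy terms.

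Combining this observability inequality with the dissipation identity $\int_0^T(y_t^2(t,1)+y_{tx}^2(t,1))dt=\mathcal{E}_y(0)-\mathcal{E}_y(T)$ and with the monotonicity bound $T\mathcal{E}_y(T)\le\int_0^T\mathcal{E}_y(t)\,dt$, one obtains $\mathcal{E}_y(T)\le\rho(T)\mathcal{E}_y(0)$ with $\rho(T)\to 0$ as $T\to+\infty$. Choose $T_0$ so that $\rho(T_0)\le e^{-1}$: the semigroup property then gives $\mathcal{E}_y((n+1)T_0)\le e^{-1}\mathcal{E}_y(nT_0)$, hence $\mathcal{E}_y(nT_0)\le e^{-n}\mathcal{E}_y(0)$ for every $n\in\N$, and interpolating by monotonicity on each interval $[nT_0,(n+1)T_0]$ yields the claimed bound $\mathcal{E}_y(t)\le\mathcal{E}_y(0)e^{1-t/T_0}$ for all $t\ge T_0$.

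The main obstacle will be the multiplier identity itself in the degenerate setting: near $x=0$ one has to show that the chosen weight $\varphi$ keeps every intermediate integral finite in both the (WD) and (SD) regimes, which is precisely where the dichotomy $K\in(0,1)$ versus $K\in[1,2)$ enters through the Hardy--Poincar\'e inequality used to control $\int_0^1 y_x^2/a\,dx$ by $\int_0^1 y_{xx}^2\,dx$; near $x=1$, the feedback conditions couple the dissipative traces with higher-order spatial traces, and one must absorb them into $C_1\mathcal{E}_y$ via Young's inequality with a small parameter carefully tuned. The bookkeeping that produces constants $C_1,C_2$ depending only on the structural quantities above is the most delicate part of the proof.
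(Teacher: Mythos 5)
You should first note that the paper does not prove this theorem at all: it is imported verbatim from \cite[Theorem 3.2]{CF_Stabil}, so there is no internal proof to compare against, and the cited reference is where the work actually happens. Your plan does reproduce the strategy of that reference: dissipation identity for classical solutions plus density, a multiplier of the form $xy_x/a$ adapted to the $L^2_{1/a}$ pivot space, an observability-type inequality, and absorption of the dissipative boundary traces. The one structural difference is the concluding step: the specific form $\mathcal{E}_y(t)\le\mathcal{E}_y(0)e^{1-t/T_0}$ is the signature of Komornik's integral inequality (show $\int_S^{\infty}\mathcal{E}_y(t)\,dt\le T_0\,\mathcal{E}_y(S)$ for all $S\ge 0$ and conclude directly), which is how \cite{CF_Stabil} finishes, whereas you iterate $\mathcal{E}_y((n+1)T_0)\le e^{-1}\mathcal{E}_y(nT_0)$ and interpolate by monotonicity. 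Both routes are legitimate and deliver the stated bound.

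That said, as a proof the proposal has a genuine gap: the observability inequality, which you correctly identify as ``the heart of the argument,'' is asserted rather than derived, and it \emph{is} the theorem. Two points in particular are not merely bookkeeping. First, the intermediate integrations by parts require trace information at $x=0$ (e.g.\ the vanishing of $xy_xy_{xxx}$, $y_xy_{xx}$ and $xy_{xx}^2$ there) that is not supplied by the Gauss--Green formula \eqref{GF1}, which only produces boundary terms at $x=1$ for pairs in $D(A_{nd})\times H^2_{\frac 1a,0}(0,1)$; justifying these limits for elements of $D(A_{nd})$ is a nontrivial part of the degenerate analysis. Second, the condition $K<2$ enters not only through the Hardy--Poincar\'e inequality, as you suggest, but more decisively through the sign and size of $\bigl(\tfrac{x}{a}\bigr)'=\tfrac1a\bigl(1-\tfrac{xa'}{a}\bigr)$ when the term $\int_0^1\tfrac{x}{a}\,y_ty_{tx}\,dx$ is integrated by parts: one needs $1-\tfrac{xa'}{a}\ge 1-K>-1$ to keep the resulting kinetic contribution absorbable. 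Until the multiplier identity is actually carried out with these points settled and the constants tracked, the argument remains a correct outline of the known proof rather than a proof.
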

	We underline that in the previous result the condition $K <2$ is only a technical hypothesis. Indeed, as written in \cite[Section 4]{CF_Stabil}, the stability for \eqref{(P_undelay)} when $K \ge 2$ is still an {\it open problem}.
	Moreover, under the conditions provided in the previous theorem, the exponential decay of solutions for \eqref{(P_undelay)} is uniform. In particular, as a consequence of Theorem \ref{teoremaprincipale}, we know that the  $\mathcal{C}_0$-semigroup generated by $(\mathcal{A}_{nd}, D(\mathcal A_{nd}))$, $(S(t))_{t\ge 0}$,  is exponentially stable, i.e. there exist $M,\omega >0$ such that
\begin{equation}\label{stima S(t)}
	\norm{S(t)}_{\mathcal{L}(\mathcal{H}_0)}\le Me^{-\omega t},\,\,\,\,\,\,\,\,\,\forall\; t\ge 0
\end{equation}
(see, for example, \cite[Theorems 2.1 and 2.2]{CF_Stabil}).

\section{The delayed equation}\label{section}
	In this section we analyze well posedness and stability for
		\begin{equation}\label{(P)}
		\begin{cases}
			y_{tt}(t,x)+a(x)y_{xxxx}(t,x)+k(t)BB^*y_t(t-\tau,x)=f(y(t,x)), &(t,x)\in Q,\\
			y(t,0)=0,\,\,y_x(t,0)=0, &t>0,\\
			\beta y(t,1)-y_{xxx}(t,1)+y_t(t,1)=0, &t >0,\\
			\gamma y_x(t,1)+y_{xx}(t,1)+y_{tx}(t,1)=0, &t >0,\\
			y(0,x)=y^0(x),\,\,y_t(0,x)=y^1(x),&x\in(0,1),\\
			B^*y_t(s,x)=g(s), &s\in [-\tau,0],
		\end{cases}
	\end{equation}
	where  $\tau >0$ is the time delay, $g$ is defined in $[-\tau,0]$ with values on a real  Hilbert space $H$, $B: H \rightarrow L^2_{\frac{1}{a}}(0,1)$ is a bounded linear operator with adjoint $B^*$ and $Q$, $\beta, \gamma$ are as in the previous section.
	
Now,
defining  $v(t,x):=y_t(t,x)$,  $Y^0(x):= \begin{pmatrix} y^0(x)\\ y^1(x)\end{pmatrix}$, $Y(t,x):=\begin{pmatrix} y(t,x)\\ v(t,x)\end{pmatrix} $ and using the  operators
\begin{equation*}
\psi(s):=\begin{pmatrix}
	0  \\
	Bg(s)\end{pmatrix}, \;  \mathcal{B}Y(t):=\begin{pmatrix}
	0  \\
	BB^*v(t)
\end{pmatrix}, \; \mathcal{F}(Y(t)):=\begin{pmatrix}
	0  \\
	f(y(t,x))
\end{pmatrix},
\end{equation*}
and $(\mathcal A_{nd}, D(\mathcal A_{nd}))$ defined in \eqref{matrix A},
 \eqref{(P)} can be formulated in the following abstract form
\begin{equation}\label{(P_abstract)}
	\begin{cases}
		\dot{Y}(t)=\mathcal{A}_{nd}Y(t)-k(t)\mathcal{B}Y(t-\tau)+\mathcal{F}(Y(t)), &(t,x) \in Q,\\
		Y(0)=Y^0,& x \in (0,1),\\
		\mathcal{B}Y(s)=\psi(s), &s \in [-\tau,0].
	\end{cases}
\end{equation}
Observe that, if $Y^0\in\mathcal{H}_0$, then the following Duhamel formula holds:
\begin{equation}\label{duhamel}
	Y(t)=S(t)Y^0+\int_0^tS(t-s)k(s)\mathcal{B}Y(s-\tau)ds+\int_0^tS(t-s)\mathcal{F}(Y(s))ds.
\end{equation}Moreover, setting
\begin{equation}\label{b}
b:=	\norm{B}_{\mathcal{L}(H,L^2_{\frac{1}{a}}(0,1))}=\norm{B^*}_{\mathcal{L}(L^2_{\frac{1}{a}}(0,1),H)},
\end{equation}we have
\begin{equation*}
	\norm{\mathcal{B}}_{\mathcal{L}(\mathcal{H}_0)}=b^2,
\end{equation*}
by \cite{ANP} and \cite{TucWei}.

In order to treat  \eqref{(P_abstract)}, we make the following assumptions on $k$ and $f$:
	\begin{Assumptions}\label{ipo k}
		The function $k: [-\tau, +\infty) \rightarrow \R$ belongs to
$L^1_{loc}([-\tau,+\infty))$ and there exists  $\Lambda>0$ such that
		\begin{equation*}
			\int_{t-\tau}^{t}|k(s)|ds\le \Lambda, \,\,\,\,\,\,\,\,\forall\; t\ge 0.
		\end{equation*}
	\end{Assumptions}
	
	\begin{Assumptions}\label{ipo f}
		Let $f:H^2_{\frac 1 a, 0}(0,1)\to L^2_{\frac 1 a}(0,1)$ be a continuous function such that		\begin{enumerate}
		\item $f(0)=0$;
			\item for all $r>0$ there exists a constant $L(r)>0$ such that, for all $u,v\in H^2_{\frac 1 a, 0}(0,1)$ satisfying $\norm{u''}_{L^2(0,1)}\le r$ and $\norm{v''}_{L^2(0,1)}\le r$, one has
			\begin{equation*}
				\norm{f(u)-f(v)}_{L^2_{\frac{1}{a}}(0,1)}\le L(r)\norm{u''-v''}_{L^2(0,1)};
			\end{equation*}
			\item there exists a strictly increasing continuous function $h: \R_+ \rightarrow \R_+ $ such that 
			\begin{equation}\label{condizione}
				\langle f(u), u\rangle_{L^2_{\frac{1}{a}}(0,1)}\le h(\norm{u''}_{L^2(0,1)})\norm{u''}^2_{L^2(0,1)}
			\end{equation}
		for all $u\in H^2_{\frac 1 a, 0}(0,1)$.
		\end{enumerate}
	\end{Assumptions}

\begin{Assumptions}\label{ipo Mbe+WP}
	Suppose that:
	\begin{enumerate}
		\item for any $t>0$ \begin{equation}\label{ipo Mbe}
			Mb^2e^{\omega \tau}\int_0^t|k(s+\tau)|ds\le \alpha +\omega't
		\end{equation}
		for suitable constants $\alpha\ge 0$ and $\omega'\in [0,\omega)$, where $M$, $\omega$  and $b$ are the constants in \eqref{stima S(t)} and \eqref{b}, respectively;
		\item there exist $T, \rho , C_\rho>0$, with $L(C_\rho)<\frac{\omega - \omega'}{M}$, such that if $Y^0\in\mathcal{H}_0$ and $g: [-\tau,0] \rightarrow H$ satisfy
		\begin{equation}\label{ipo WP}
			\norm{Y^0}^2_{\mathcal{H}_0}+\int_0^\tau |k(s)|\cdot \norm{g(s-\tau)}^2_Hds<\rho^2,
		\end{equation}
	then \eqref{(P_abstract)} has a unique solution $Y\in\mathcal{C}([0, T);\mathcal{H}_0)$ satisfying $\norm{Y(t)}_{\mathcal{H}_0}\le C_\rho$ for all $t\in [0, T)$.
	\end{enumerate}
\end{Assumptions}
In particular, Hypothesis \ref{ipo k} is crucial to prove the existence of a local unique solution for \eqref{(P_abstract)} (see Proposition \ref{th esist loc}); while Hypotheses \ref{ipo k}, \ref{ipo f} are needed to prove that \eqref{(P_abstract)} satisfies Hypothesis \ref{ipo Mbe+WP}.2 and, if the initial data are sufficiently small,  the corresponding solutions exist and decay  exponentially  in $(0, +\infty)$ (see Theorem \ref{global}).
On the other hand, the exponential stability is proved thanks to Hypotheses \ref{ipo f} and \ref{ipo Mbe+WP} (see Theorem \ref{Th Duhamel}).
Moreover, observe that Hypothesis \ref{ipo Mbe+WP}.1 is satisfied, in particular, if  $k\in L^1[0,+\infty)$ or $k\in L^{\infty}[0,+\infty)$ and $\norm{k}_{L^\infty(0,1)}$ is smaller than a suitable constant depending on $M,\omega, b$ and $\tau$.

Furthermore, thanks to Hypothesis \ref{ipo f}, $\mathcal{F}(0)=0$ and for any $r>0$ there exists a constant $L(r)>0$ such that
\begin{equation*}
	\norm{\mathcal{F}(Y)-\mathcal{F}(Z)}_{\mathcal{H}_0}\le L(r)\norm{Y-Z}_{\mathcal{H}_0}
\end{equation*} 
whenever $\norm{Y}_{\mathcal{H}_0}\le r$ and $\norm{Z}_{\mathcal{H}_0}\le r$. In particular, 

\[
	\norm{\mathcal{F}(Y)}_{\mathcal{H}_0}\le L(r)\norm{Y}_{\mathcal{H}_0}.
\]

\vspace{0,3cm}

To conclude this section, define
\begin{equation}\label{Fgrande}
F(y):=\int_0^y f(s) ds, \quad y\in H^2_{\frac 1 a, 0}(0,1)
\end{equation}
and
observe that it is possible to prove the following estimate on the nonlinear term $\displaystyle\int_0^1\frac{F(y(x))}{a(x)}dx$ thanks to Hypothesis \ref{ipo f}:
\begin{Lemma}\label{Lemma 3.1}
	Assume Hypothesis \ref{ipo f} and $a$ (WD) or (SD). Then
	\begin{equation}\label{stima nonlin}
		\Biggl |\int_0^1\frac{F(y(x))}{a(x)}dx\Biggr |\le \frac{1}{2}h(\norm{y''}_{L^2(0,1)})\norm{y''}^2_{L^2(0,1)},
	\end{equation}
for all $y\in H^2_{\frac 1 a, 0}(0,1)$.
\end{Lemma}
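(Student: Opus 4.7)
The plan is to reduce the bound on $\int_0^1 F(y)/a\,dx$ to the pointwise-in-parameter inner-product bound supplied by Hypothesis \ref{ipo f}.3, using a one-parameter line integral from $0$ to $y$. The key algebraic identity is the ``radial'' fundamental theorem of calculus
\begin{equation*}
F(y)(x)=\int_0^1 \frac{d}{d\lambda}F(\lambda y)(x)\,d\lambda = \int_0^1 f(\lambda y)(x)\, y(x)\,d\lambda,
\end{equation*}
which is how the primitive defined in \eqref{Fgrande} should be read in this abstract setting: it is the integral of $f$ along the segment $\{\lambda y : \lambda \in [0,1]\}$ in $H^2_{\frac{1}{a},0}(0,1)$.

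Given that identity, I would divide by $a(x)$, integrate over $(0,1)$, and apply Fubini to obtain
\begin{equation*}
\int_0^1\frac{F(y)(x)}{a(x)}\,dx=\int_0^1 \langle f(\lambda y), y\rangle_{L^2_{\frac{1}{a}}(0,1)}\,d\lambda
=\int_0^1 \frac{1}{\lambda}\langle f(\lambda y),\lambda y\rangle_{L^2_{\frac{1}{a}}(0,1)}\,d\lambda,
\end{equation*}
the last equality being trivial for $\lambda\in(0,1]$. Since $\lambda y\in H^2_{\frac{1}{a},0}(0,1)$ with $\|(\lambda y)''\|_{L^2(0,1)}=\lambda\|y''\|_{L^2(0,1)}$, Hypothesis \ref{ipo f}.3 applied to $u=\lambda y$ yields
\begin{equation*}
\langle f(\lambda y),\lambda y\rangle_{L^2_{\frac{1}{a}}(0,1)}\le h(\lambda\|y''\|_{L^2(0,1)})\,\lambda^{2}\|y''\|^2_{L^2(0,1)}.
\end{equation*}
Using the strict monotonicity of $h$ (so $h(\lambda\|y''\|_{L^2})\le h(\|y''\|_{L^2})$ for $\lambda\in[0,1]$) and $\int_0^1 \lambda\,d\lambda=\frac{1}{2}$ gives the desired upper bound $\frac{1}{2}h(\|y''\|_{L^2(0,1)})\|y''\|^2_{L^2(0,1)}$.

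The main obstacle is the absolute value in \eqref{stima nonlin}: Hypothesis \ref{ipo f}.3 is a one-sided estimate on $\langle f(u),u\rangle$, whereas the claim requires a two-sided bound. The natural way to close this gap is to apply the same line-integral argument to $-F$, which reduces the lower estimate to a bound on $-\langle f(\lambda y),\lambda y\rangle$. This is consistent with how \eqref{condizione} is used later (where only moduli matter, because it controls a term to be absorbed in the energy), so the bound \eqref{condizione} can be read as an absolute-value estimate; once that is in place, the argument above gives both inequalities and hence \eqref{stima nonlin}. The role of the (WD)/(SD) hypothesis on $a$ in this lemma is only to guarantee that $H^2_{\frac{1}{a},0}(0,1)\hookrightarrow L^2_{\frac{1}{a}}(0,1)$ is well defined (via \eqref{stimanormeequi}), so that all inner products above make sense; beyond that, the proof is purely algebraic and does not interact with the degeneracy in a delicate way.
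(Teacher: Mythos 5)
Your proposal is correct and follows essentially the same route as the paper: the identity $F(y)=\int_0^1 f(sy)\,y\,ds$, Fubini, Hypothesis \ref{ipo f}.3 applied to $u=sy$, the monotonicity of $h$, and $\int_0^1 s\,ds=\tfrac12$. Your observation about the absolute value is well taken --- the paper's own proof silently treats \eqref{condizione} as a two-sided bound at the step ``Thus, by \eqref{condizione}'' --- so your explicit remark that the hypothesis must be read as an estimate on $|\langle f(u),u\rangle|$ is, if anything, more careful than the original.
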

\begin{proof}
	Fix $y\in H^2_{\frac 1 a, 0}(0,1)$. Observing that $\frac{d}{ds}F(sy)=F'(sy)y=f(sy)y$, we have
	\begin{equation*}
		\begin{aligned}
			\int_0^1\frac{F(y(x))}{a(x)}dx&=\int_0^1\frac{1}{a(x)}\int_0^1f(sy(x))y(x)\,ds\,dx=\int_0^1\langle f(sy), sy\rangle_{L^2_{\frac{1}{a}}(0,1)}\frac{ds}{s}.
		\end{aligned}
	\end{equation*}
Thus, by \eqref{condizione},
\begin{equation*}
	\begin{aligned}
		\Biggl |\int_0^1\frac{F(y(x))}{a(x)}dx\Biggr |&\le \int_0^1h(\norm{sy''}_{L^2(0,1)})s^2\norm{y''}^2_{L^2(0,1)}\frac{ds}{s}	\\
		&\le \frac{1}{2}h(\norm{y''}_{L^2(0,1)})\norm{y''}^2_{L^2(0,1)}.
	\end{aligned}
\end{equation*}
\end{proof}

\subsection{Exponential stability}
	Under the well posedness assumption \eqref{ipo WP}, in this subsection we will give the exponential decay result for problem \eqref{(P_abstract)}. 
As a first step, we give an abstract stability result. This is similar to \cite[Theorem 2.1]{PaoPi}, but here we give the proof for the reader's convenience.

	\begin{Theorem}\label{Th Duhamel}
		Assume Hypotheses \ref{ipo f} and \ref{ipo Mbe+WP}, $a$ (WD) or (SD) and consider  the initial data $(Y^0,g)$ satisfying \eqref{ipo WP}. Then every solution $Y$ of \eqref{(P_abstract)} is such that
		\begin{equation}\label{exp decay}
			\norm{Y(t)}_{\mathcal{H}_0}\le Me^\alpha \Biggl (	\norm{Y^0}_{\mathcal{H}_0}+\int_0^\tau e^{\omega s}|k(s)|\cdot \norm{\psi(s-\tau)}_{\mathcal{H}_0}ds \Biggr )e^{-(\omega -\omega'-ML(C_\rho))t},
		\end{equation}
		for any $t\in [0, T)$.
	\end{Theorem}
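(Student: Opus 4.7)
The proof is a Gronwall argument applied to the Duhamel formula \eqref{duhamel}, exploiting the exponential decay \eqref{stima S(t)} of the unperturbed semigroup and the local Lipschitz behaviour of $\mathcal{F}$. The assumption \eqref{ipo WP} together with Hypothesis \ref{ipo Mbe+WP}.2 guarantees the a priori bound $\|Y(t)\|_{\mathcal{H}_0}\le C_\rho$ on the interval of existence, so that the Lipschitz constant $L(C_\rho)$ controls the nonlinear term uniformly.

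First, I would take the $\mathcal{H}_0$--norm in \eqref{duhamel}, use $\|S(t)\|_{\mathcal{L}(\mathcal{H}_0)}\le Me^{-\omega t}$, the identity $\|\mathcal{B}\|_{\mathcal{L}(\mathcal{H}_0)}=b^2$, and the bound $\|\mathcal{F}(Y(s))\|_{\mathcal{H}_0}\le L(C_\rho)\|Y(s)\|_{\mathcal{H}_0}$. Multiplying the resulting inequality by $e^{\omega t}$ and letting $\phi(t):=e^{\omega t}\|Y(t)\|_{\mathcal{H}_0}$, the nonlinear contribution becomes $ML(C_\rho)\int_0^t\phi(s)\,ds$. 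For the delay term I would split the integration at $s=\tau$: on $[0,\tau]$ the initial condition gives $\mathcal{B}Y(s-\tau)=\psi(s-\tau)$, producing the contribution
\begin{equation*}
M\int_0^\tau e^{\omega s}|k(s)|\,\|\psi(s-\tau)\|_{\mathcal{H}_0}\,ds,
\end{equation*}
which is understood as zero when $t<\tau$. On $[\tau,t]$ I would estimate $\|\mathcal{B}Y(s-\tau)\|_{\mathcal{H}_0}\le b^2\|Y(s-\tau)\|_{\mathcal{H}_0}$ and perform the change of variable $u=s-\tau$, obtaining $Mb^2e^{\omega\tau}\int_0^{t-\tau}|k(u+\tau)|\phi(u)\,du$, which I would enlarge to an integral on $[0,t]$.

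Collecting the three pieces, $\phi$ satisfies the integral inequality
\begin{equation*}
\phi(t)\le C_0+\int_0^t\bigl(Mb^2e^{\omega\tau}|k(s+\tau)|+ML(C_\rho)\bigr)\phi(s)\,ds,
\end{equation*}
with $C_0=M\|Y^0\|_{\mathcal{H}_0}+M\int_0^\tau e^{\omega s}|k(s)|\,\|\psi(s-\tau)\|_{\mathcal{H}_0}\,ds$. Gronwall's inequality then yields
\begin{equation*}
\phi(t)\le C_0\exp\Bigl(Mb^2e^{\omega\tau}\int_0^t|k(s+\tau)|\,ds+ML(C_\rho)\,t\Bigr),
\end{equation*}
and inserting the bound $Mb^2e^{\omega\tau}\int_0^t|k(s+\tau)|\,ds\le \alpha+\omega' t$ from Hypothesis \ref{ipo Mbe+WP}.1 gives $\phi(t)\le C_0e^{\alpha}e^{(\omega'+ML(C_\rho))t}$. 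Dividing by $e^{\omega t}$ produces \eqref{exp decay}; the smallness condition $L(C_\rho)<(\omega-\omega')/M$ ensures that the exponent $\omega-\omega'-ML(C_\rho)$ is strictly positive.

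The main technical point is the correct handling of the delay integral on the two subintervals, and in particular recognising that on $[0,\tau]$ the quantity $\mathcal{B}Y(s-\tau)$ must be identified with $\psi(s-\tau)$ via the history initial datum, rather than with an operator applied to an element of $\mathcal{H}_0$; this is what yields the precise form of $C_0$ in \eqref{exp decay}. The degeneracy of $a$ enters only indirectly, through the exponential stability of the semigroup $(S(t))_{t\ge 0}$ on the weighted space $\mathcal{H}_0$ provided by Theorem \ref{teoremaprincipale}, so that the argument has essentially the same structure as in the non-degenerate case treated in \cite{PaoPi}.
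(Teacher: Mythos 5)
Your proposal is correct and follows essentially the same route as the paper's proof: a norm estimate on the Duhamel formula using \eqref{stima S(t)} and the local Lipschitz bound on $\mathcal{F}$, splitting the delay integral at $s=\tau$, the substitution $z=s-\tau$, Gronwall's lemma, and then Hypothesis \ref{ipo Mbe+WP}.1. The only slip is the clause saying the $\psi$-contribution ``is understood as zero when $t<\tau$'' --- it is the integral over $[\tau,t]$ that is vacuous in that case, while the $\psi$-term survives (and is correctly retained in your constant $C_0$), exactly matching the paper's case distinction between $t<\tau$ and $t\ge\tau$.
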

\begin{proof}
	Since $\norm{\mathcal{F}(Y(t))}_{\mathcal{H}_0}\le L(C_\rho)\norm{Y(t)}_{\mathcal{H}_0}$ for every $t\in [0, T)$, by \eqref{duhamel} we have
	\begin{equation*}
		\begin{aligned}
			\norm{Y(t)}_{\mathcal{H}_0}&\le Me^{-\omega t}\norm{Y^0}_{\mathcal{H}_0}+Me^{-\omega t}\int_0^te^{\omega s}|k(s)|\cdot \norm{\mathcal{B}Y(s-\tau)}_{\mathcal{H}_0}ds\\
			&+ML(C_\rho)e^{-\omega t}\int_0^te^{\omega s}\norm{Y(s)}_{\mathcal{H}_0}ds,
		\end{aligned}
	\end{equation*}
	where we recall that $M,\omega$ and $b$ are the parameters appearing in \eqref{stima S(t)} and \eqref{b}, respectively. In particular, we obtain
	\begin{equation}\label{norma Y(t)}
		\begin{aligned}
			\norm{Y(t)}_{\mathcal{H}_0}&\le Me^{-\omega t}\norm{Y^0}_{\mathcal{H}_0}+Me^{-\omega t}\int_0^\tau e^{\omega s}|k(s)|\cdot \norm{\psi(s-\tau)}_{\mathcal{H}_0}ds\\
			&+Me^{-\omega t}\int_\tau^t e^{\omega s}b^2|k(s)|\cdot \norm{Y(s-\tau)}_{\mathcal{H}_0}ds\\
			&+ML(C_\rho)e^{-\omega t}\int_0^te^{\omega s}\norm{Y(s)}_{\mathcal{H}_0}ds,
		\end{aligned}
	\end{equation}
	if $\tau \le t$ and
	\begin{equation}\label{norma Y(t)1}
		\begin{aligned}
			\norm{Y(t)}_{\mathcal{H}_0}&\le Me^{-\omega t}\norm{Y^0}_{\mathcal{H}_0}+Me^{-\omega t}\int_0^\tau e^{\omega s}|k(s)|\cdot \norm{\psi(s-\tau)}_{\mathcal{H}_0}ds\\
			&+ML(C_\rho)e^{-\omega t}\int_0^te^{\omega s}\norm{Y(s)}_{\mathcal{H}_0}ds,
		\end{aligned}
	\end{equation}
	if $t< \tau$.
Setting $z:=s-\tau$ in the second integral of \eqref{norma Y(t)} or \eqref{norma Y(t)1} and multiplying the previous inequality by $e^{\omega t}$, we get
\begin{equation*}
	\begin{aligned}
		e^{\omega t}\norm{Y(t)}_{\mathcal{H}_0}&\le M\norm{Y^0}_{\mathcal{H}_0}+M\int_0^\tau e^{\omega s}|k(s)|\cdot \norm{\psi(s-\tau)}_{\mathcal{H}_0}ds\\
		&+Mb^2e^{\omega \tau}\int_0^t e^{\omega z}|k(z+\tau)|\cdot \norm{Y(z)}_{\mathcal{H}_0}dz\\
		&+ML(C_\rho)\int_0^te^{\omega s}\norm{Y(s)}_{\mathcal{H}_0}ds,
	\end{aligned}
\end{equation*}
if $\tau \le t$ and 
\begin{equation*}
	\begin{aligned}
		e^{\omega t}\norm{Y(t)}_{\mathcal{H}_0}&\le M\norm{Y^0}_{\mathcal{H}_0}+M\int_0^\tau e^{\omega s}|k(s)|\cdot \norm{\psi(s-\tau)}_{\mathcal{H}_0}ds\\
		&+ML(C_\rho)\int_0^te^{\omega s}\norm{Y(s)}_{\mathcal{H}_0}ds,
	\end{aligned}
\end{equation*}
if $t < \tau$.
Now, let us denote $M_0:=M\norm{Y^0}_{\mathcal{H}_0}+M\int_0^\tau e^{\omega s}|k(s)|\cdot \norm{\psi(s-\tau)}_{\mathcal{H}_0}ds$. Thus, by Gronwall's Lemma and \eqref{ipo Mbe}, we have 
	\begin{equation*}
		\norm{Y(t)}_{\mathcal{H}_0}\le M_0e^{Mb^2	e^{\omega \tau}\int_0^t|k(s+\tau)|ds+ML(C_\rho)t-\omega t}\le M_0e^\alpha e^{[ML(C_\rho)-(\omega -\omega')]t},
	\end{equation*}
	if $\tau \le t$ and
	\begin{equation*}
		\norm{Y(t)}_{\mathcal{H}_0}\le M_0e^{ML(C_\rho)t-\omega t}\le M_0e^{[ML(C_\rho)-\omega]t},
	\end{equation*}
if $t < \tau$. In any case \eqref{exp decay}  holds.
\end{proof}
For the next step, we define the appropriate  energy functional.
		\begin{Definition}
	Let $y$ be a mild solution of (\ref{(P)}) and define its energy  as
	\begin{equation*}
	\begin{aligned}
		E_y(t)&:=\frac{1}{2}\int_0^1 \Biggl (\frac{y^2_t(t,x)}{a(x)}+y^2_{xx}(t,x) \Biggr )dx+\frac{\beta}{2}y^2(t,1)+\frac{\gamma}{2}y_x^2(t,1)\\
		&-\int_0^1\frac{F(y(t,x))}{a(x)} dx+\frac{1}{2}\int_{t-\tau}^t |k(s+\tau)|\cdot \norm{B^*y_t(s)}^2_{H}ds,\quad\,\,\,\,\,\,\forall\;t\ge 0,
	\end{aligned}
	\end{equation*}
where $F$ is defined in \eqref{Fgrande}.
\end{Definition}

The following result holds.

\begin{Theorem}\label{th E}
Assume  Hypothesis \ref{ipo Mbe+WP}.2, $a$ (WD) or (SD)	and let $y$ be a mild solution of \eqref{(P)} defined on $[0, T).$ If $E_y(t)\ge\frac{1}{4}\norm{y_t(t)}^2_{L^2_{\frac{1}{a}}(0,1)}$ for any $t\in [0, T),$ then
	\begin{equation*}
		E_y(t)\le C(t)E_y(0),\,\,\,\,\,\,\,\,\,\,\forall\; t\in [0, T),
	\end{equation*}
where 
\begin{equation}\label{function C}
C(t):=e^{2\int_0^tb^2(|k(s)|+|k(s+\tau)|)ds}.
\end{equation}
\end{Theorem}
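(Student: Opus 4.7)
The natural strategy is the energy method: differentiate $E_y(t)$ in time, substitute the PDE for $y_{tt}$, integrate by parts to produce the boundary dissipation, and then absorb the delay contribution by means of the auxiliary integral term built into $E_y$. I would carry this out on classical solutions and extend to mild solutions by density afterwards.

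First I would compute $\tfrac{d}{dt}\bigl[\tfrac12\int_0^1 y_t^2/a\,dx\bigr] = \int_0^1 y_t y_{tt}/a\,dx$ and substitute $y_{tt}/a = -y_{xxxx}-\tfrac{k(t)}{a}BB^*y_t(t-\tau)+\tfrac{f(y)}{a}$. Integration by parts on $-\int_0^1 y_t y_{xxxx}\,dx$ (performed twice) produces interior terms $-\tfrac12 \tfrac{d}{dt}\int_0^1 y_{xx}^2\,dx$ plus boundary terms at $x=1$ (the boundary terms at $x=0$ vanish because $y(t,0)=y_x(t,0)=0$ force $y_t(t,0)=y_{tx}(t,0)=0$). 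Using $\mathcal{C}_1 y(t,1)=0$ the boundary contribution at $x=1$ becomes exactly $-\beta y(t,1)y_t(t,1)-y_t^2(t,1)-\gamma y_x(t,1)y_{tx}(t,1)-y_{tx}^2(t,1)$. The $\beta$- and $\gamma$-terms are precisely cancelled by the time derivatives of $\tfrac{\beta}{2}y^2(t,1)$ and $\tfrac{\gamma}{2}y_x^2(t,1)$, while $\tfrac{d}{dt}[-\int_0^1 F(y)/a\,dx]=-\int_0^1 f(y)y_t/a\,dx$ cancels the contribution of the source. What survives, before handling the delay, is
\begin{equation*}
\frac{d E_y}{dt}=-y_t^2(t,1)-y_{tx}^2(t,1)-k(t)\langle B^*y_t(t),B^*y_t(t-\tau)\rangle_H+\frac{d}{dt}\Bigl[\tfrac12\int_{t-\tau}^t|k(s+\tau)|\,\|B^*y_t(s)\|_H^2 ds\Bigr].
\end{equation*}

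Next I would apply Young's inequality to the cross term, obtaining
\begin{equation*}
-k(t)\langle B^*y_t(t),B^*y_t(t-\tau)\rangle_H\le \tfrac{|k(t)|}{2}\|B^*y_t(t)\|_H^2+\tfrac{|k(t)|}{2}\|B^*y_t(t-\tau)\|_H^2 .
\end{equation*}
The second summand is killed exactly by the $-\tfrac12|k(t)|\|B^*y_t(t-\tau)\|_H^2$ produced by differentiating the delay integral, while the first combines with the positive contribution $\tfrac12|k(t+\tau)|\|B^*y_t(t)\|_H^2$ from the upper limit. Dropping the negative boundary dissipation and using $\|B^*y_t(t)\|_H\le b\|y_t(t)\|_{L^2_{1/a}(0,1)}$ (cf.\ \eqref{b}), I get
\begin{equation*}
\frac{dE_y}{dt}\le \tfrac{b^2}{2}\bigl(|k(t)|+|k(t+\tau)|\bigr)\|y_t(t)\|_{L^2_{1/a}(0,1)}^2 .
\end{equation*}

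Finally, the standing assumption $E_y(t)\ge \tfrac14\|y_t(t)\|_{L^2_{1/a}(0,1)}^2$ upgrades the right-hand side to $2b^2(|k(t)|+|k(t+\tau)|)E_y(t)$, and Gronwall's lemma yields $E_y(t)\le E_y(0)\exp\bigl(2\int_0^t b^2(|k(s)|+|k(s+\tau)|)ds\bigr)=C(t)E_y(0)$. The only genuine technical point is the boundary integration by parts: one must verify that each boundary term at $x=0$ really vanishes in the degenerate setting (this follows from $y,y_x\in H^2_{1/a,0}$ and the regularity provided by Theorem \ref{Theorem regol}) and that the substitutions from $\mathcal{C}_1 y(t,1)=0$ produce exactly the cancellation structure described. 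The passage from classical to mild solutions is standard by density of $D(\mathcal{A}_{nd})$ in $\mathcal{H}_0$ together with continuous dependence.
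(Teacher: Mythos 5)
Your proposal is correct and follows essentially the same route as the paper: formal differentiation of $E_y$, substitution of the PDE together with the Gauss--Green formula \eqref{GF1} to isolate the boundary dissipation $-y_t^2(t,1)-y_{tx}^2(t,1)$, Young's inequality on the cross term $-k(t)\langle B^*y_t(t),B^*y_t(t-\tau)\rangle_H$ with exact cancellation of the delayed contribution by the auxiliary integral, the bound $\|B^*y_t(t)\|_H\le b\|y_t(t)\|_{L^2_{1/a}(0,1)}$, the standing lower bound on $E_y$, and Gronwall. The only cosmetic difference is the order of operations (you substitute the equation before integrating by parts, the paper integrates by parts first), and your closing remark about the density argument for mild solutions is a point the paper leaves implicit behind the word ``formally.''
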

	\begin{proof}
	Let $y$ be a mild solution of \eqref{(P)}. Differentiating formally $E_y$ with respect to $t$, using \eqref{GF1} and the boundary conditions, we obtain
		\begin{equation*}
			\begin{aligned}
				\frac{dE_y(t)}{dt}&= \int_0^1\Big (
\frac{y_t(t,x)y_{tt}(t,x)}{a(x)} +y_{xx}(t,x)y_{xxt}(t,x)\Big ) dx\\
&+\beta y(t,1)y_t(t,1)+\gamma y_x(t,1)y_{tx}(t,1)\\
			&-\int_0^1\frac{f(y(t,x))y_t(t,x)}{a(x)} dx+\frac{1}{2}|k(t+\tau)|\cdot \norm{B^*y_t(s)}^2_{H}\\&
			-\frac{1}{2}|k(t)|\cdot \norm{B^*y_t(t-\tau)}^2_{H}\\
			&= \int_0^1\Big (
			\frac{y_t(t,x)y_{tt}(t,x)}{a(x)} +y_{xxxx}(t,x)y_{t}(t,x)\Big ) dx\\&-y_{xxx}(t,1)y_t(t,1)+y_{xx}(t,1)y_{tx}(t,1)\\
			&+y_t(t,1)[y_{xxx}(t,1)-y_t(t,1)]+y_{tx}(t,1)[-y_{xx}(t,1)-y_{tx}(t,1)]\\
			&-\int_0^1\frac{f(y(t,x))y_t(t,x)}{a(x)} dx+\frac{1}{2}|k(t+\tau)|\cdot \norm{B^*y_t(s)}^2_{H}\\&-\frac{1}{2}|k(t)|\cdot \norm{B^*y_t(t-\tau)}^2_{H}\\
			&= \int_0^1\Big (
			\frac{y_t(t,x)y_{tt}(t,x)}{a(x)} +y_{xxxx}(t,x)y_{t}(t,x)\Big ) dx -y_t^2(t,1)-y_{tx}^2(t,1)\\
			&-\int_0^1\frac{f(y(t,x))y_t(t,x)}{a(x)} dx+\frac{1}{2}|k(t+\tau)|\cdot \norm{B^*y_t(s)}^2_{H}\\&-\frac{1}{2}|k(t)|\cdot \norm{B^*y_t(t-\tau)}^2_{H}.
			\end{aligned}
		\end{equation*}
		Using the differential equation in \eqref{(P)}, we have
		\begin{equation*}
			\begin{aligned}
				\frac{dE_y(t)}{dt}&
				= -y_t^2(t,1)-y_{tx}^2(t,1) -k(t)\left\langle BB^*y_t(t-\tau), y_t(t)\right\rangle_{L^2_{\frac 1a}(0,1)}\\
				&+\frac{1}{2}|k(t+\tau)|\cdot \norm{B^*y_t(t)}^2_H-\frac{1}{2}|k(t)|\cdot \norm{B^*y_t(t-\tau)}^2_H\\\
				&=-y_t^2(t,1)-y_{tx}^2(t,1) -k(t)\left\langle B^*y_t(t),B^*y_t(t-\tau)\right\rangle_H\\
				&+\frac{1}{2}|k(t+\tau)|\cdot \norm{B^*y_t(t)}^2_H-\frac{1}{2}|k(t)|\cdot \norm{B^*y_t(t-\tau)}^2_H.
			\end{aligned}
		\end{equation*}
	By the Cauchy inequality, we get
	\begin{equation*}
		\begin{aligned}
			\frac{dE_y(t)}{dt}&\le \frac{1}{2}(|k(t+\tau)|+|k(t)|) \norm{B^*y_t(t)}^2_H\\
			&\le 2b^2(|k(t+\tau)|+|k(t)|)\frac{1}{4}\norm{y_t(t)}^2_{L^2_{\frac{1}{a}}(0,1)}.
		\end{aligned}
	\end{equation*}
	Using the fact that $E_y(t)\ge\frac{1}{4}\norm{y_t(t)}^2_{L^2_{\frac{1}{a}}(0,1)}$ for all $t \in [0,T)$, we get
	\begin{equation*}
		\frac{dE_y(t)}{dt}\le 2b^2(|k(t+\tau)|+|k(t)|)E_y(t)
	\end{equation*}
	and the thesis follows using the Gronwall Lemma.
	\end{proof}
		\subsection{The well posedness assumption}
	In this subsection we prove the well posedness assumption, i.e. Hypothesis \ref{ipo Mbe+WP}.2, for \eqref{(P_abstract)}. To this aim the following two propositions are crucial.
	
	\begin{Proposition}\label{th esist loc}
	Assume Hypothesis \ref{ipo k} and $a$ (WD) or (SD). Let us consider \eqref{(P_abstract)} with initial data $Y^0\in\mathcal{H}_0$ and $\psi\in \mathcal{C}([-\tau,0];\mathcal{H}_0)$. Then there exists a unique continuous local solution.
	\end{Proposition}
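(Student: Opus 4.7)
The plan is to rewrite \eqref{(P_abstract)} on a short time interval $[0,T_*]$ as a fixed point equation for $Y\in\mathcal{C}([0,T_*];\mathcal{H}_0)$ via the Duhamel formula \eqref{duhamel}, and then apply the Banach contraction principle. Since $(\mathcal{A}_{nd},D(\mathcal{A}_{nd}))$ generates the contraction $C_0$-semigroup $(S(t))_{t\ge 0}$ on $\mathcal{H}_0$, and since for $t\in[0,\tau]$ the delayed argument $t-\tau\in[-\tau,0]$ forces $\mathcal{B}Y(t-\tau)=\psi(t-\tau)$ via the initial history, formula \eqref{duhamel} reads, for $T_*\in(0,\tau]$,
\begin{equation*}
Y(t)=\Phi(Y)(t):=S(t)Y^0+\int_0^t S(t-s)k(s)\psi(s-\tau)\,ds+\int_0^t S(t-s)\mathcal{F}(Y(s))\,ds.
\end{equation*}
The two data-dependent summands are continuous in $t$ with values in $\mathcal{H}_0$: continuity of $t\mapsto S(t)Y^0$ follows from strong continuity of $(S(t))$, while continuity of the memory convolution follows from $|k|\in L^1_{\mathrm{loc}}$ (Hypothesis \ref{ipo k}), boundedness of $\psi\in\mathcal{C}([-\tau,0];\mathcal{H}_0)$, and dominated convergence.

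Fix $r>M_0:=\|Y^0\|_{\mathcal{H}_0}+\Lambda\|\psi\|_\infty$ and consider the closed ball $\overline{B}_r\subset\mathcal{C}([0,T_*];\mathcal{H}_0)$ of radius $r$. Using $\|S(t)\|_{\mathcal{L}(\mathcal{H}_0)}\le 1$, Hypothesis \ref{ipo k} (which, together with $T_*\le\tau$, yields $\int_0^{T_*}|k(s)|\,ds\le\Lambda$), and the local Lipschitz bound $\|\mathcal{F}(Y(s))\|_{\mathcal{H}_0}\le L(r)\|Y(s)\|_{\mathcal{H}_0}$ (consequence of $\mathcal{F}(0)=0$ and the local Lipschitz hypothesis on $f$ that is implicitly in force for the abstract equation to make sense), one gets
\begin{equation*}
\|\Phi(Y)\|_\infty\le M_0+L(r)\,T_*\,r,\qquad \|\Phi(Y)-\Phi(Z)\|_\infty\le L(r)\,T_*\,\|Y-Z\|_\infty,
\end{equation*}
for all $Y,Z\in\overline{B}_r$. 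Choosing $T_*\in(0,\tau]$ small enough so that $M_0+L(r)T_*r\le r$ and $L(r)T_*<1$ makes $\Phi$ a strict contraction of $\overline{B}_r$ into itself, and Banach's principle produces the unique fixed point, i.e.\ the desired continuous local mild solution.

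The only step that is more than purely mechanical is verifying the continuity of the memory convolution under the weak $L^1_{\mathrm{loc}}$ assumption on $k$; this is handled by splitting $\int_0^{t+h}-\int_0^{t}$ into a short-interval piece (controlled by absolute continuity of $\int_0^\cdot|k(s)|\,ds$) and a main piece where $\|[S(t+h-s)-S(t-s)]\psi(s-\tau)\|_{\mathcal{H}_0}\to 0$ pointwise as $h\to 0$ with a uniform integrable majorant. No extra global bound on $Y$ is needed since we only seek a local solution: the sign and growth structure of $f$ play no role here and will intervene only later, in Theorem \ref{th E}, to prolong the solution globally.
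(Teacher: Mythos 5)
Your proof is correct and follows essentially the same route as the paper: both exploit that for $t\in[0,\tau]$ the delayed term reduces to the prescribed history $\psi(t-\tau)$, turning \eqref{(P_abstract)} into an undelayed semilinear evolution problem, after which the paper simply cites the standard local existence theory (\cite{Pazy}) while you unfold that citation into the explicit Banach fixed point argument on a ball of $\mathcal{C}([0,T_*];\mathcal{H}_0)$. Your parenthetical remark is also on target: the local Lipschitz property of $f$ from Hypothesis \ref{ipo f} is implicitly needed here even though the statement of the Proposition lists only Hypothesis \ref{ipo k}.
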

\begin{proof}
	It is sufficient to observe that if $t \in [0,\tau]$, then $t-\tau\in [-\tau,0]$. Thus, \eqref{(P_abstract)} can be formulated as an undelayed problem in the interval $[0,\tau]$:
	\begin{equation*}
		\begin{cases}
			\dot{Y}(t)=\mathcal{A}_{nd}Y(t)-k(t)\psi(t-\tau)+\mathcal{F}(Y(t)),&t\in (0,\tau),\\
			Y(0)=Y^0.
		\end{cases}
	\end{equation*}
Then, from the standard theory for inhomogeneous evolution problems (see \cite[Chapter 6, Theorem 1.4]{Pazy} or  \cite{PZ}) we have that there exists a unique solution of \eqref{(P_abstract)} on $[0,\delta)$, for some  $\delta\le\tau$.
\end{proof}

	\begin{Proposition}\label{Prop E}
	Assume Hypothesis \ref{ipo f}, $a$ (WD) or (SD) and consider \eqref{(P_abstract)} with initial data $Y^0\in\mathcal{H}_0$ and $\psi\in \mathcal{C}([-\tau,0];\mathcal{H}_0)$. Take $T>0$ and let $Y$ be a non trivial solution of \eqref{(P_abstract)} defined on $[0,\delta),$ with $\delta\le T$. The following statements hold: 
	\begin{enumerate}
		\item if $h\bigl (\norm{(y^0)''}_{L^2(0,1)}\bigr )<\frac{1}{2}$, then $E_y(0)>0$;
		\item if $h\bigl (\norm{(y^0)''}_{L^2(0,1)}\bigr )<\frac{1}{2}$ and $h(2\sqrt{C(T)E_y(0)})<\frac{1}{2}$, then
		\begin{equation}\label{stima_dal_basso}
\begin{array}{l}
			\displaystyle{E_y(t)>\frac{1}{4}\norm{y_t(t)}^2_{L^2_{\frac{1}{a}}(0,1)}+\frac{1}{4}\norm{y_{xx}(t)}^2_{L^2(0,1)}+\frac{\beta}{4}y^2(t,1)+\frac{\gamma}{4}y_x^2(t,1)}
\\\displaystyle{
\hspace{5,5 cm}+\frac{1}{4}\int_{t-\tau}^t |k(s+\tau)|\cdot \norm{B^*y_t(s)}^2_{H}ds}
\end{array}
		\end{equation}
		for all $t\in [0,\delta)$, being $C(\cdot)$ the function defined in \eqref{function C}. In particular,
		\begin{equation*}
			E_y(t)>\frac{1}{4}\norm{Y(t)}^2_{\mathcal{H}_0},\,\,\,\,\,\,\,\,\,\,\forall\; t\in [0,\delta).
		\end{equation*}
	\end{enumerate}
    \end{Proposition}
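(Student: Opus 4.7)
The plan is to read both statements as consequences of the pointwise estimate on the potential term $\int_0^1 F(y)/a\,dx$ provided by Lemma \ref{Lemma 3.1}, combined in part \emph{(2)} with a continuity bootstrap anchored on Theorem \ref{th E}.

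Statement \emph{(1)} is almost immediate. I unpack the definition of $E_y(0)$,
\[
E_y(0) = \frac{1}{2}\norm{y^1}^2_{L^2_{\frac{1}{a}}(0,1)} + \frac{1}{2}\norm{(y^0)''}^2_{L^2(0,1)} + \frac{\beta}{2}(y^0(1))^2 + \frac{\gamma}{2}(y^0_x(1))^2 - \int_0^1\frac{F(y^0)}{a}\,dx + \frac{1}{2}\int_{-\tau}^0|k(s+\tau)|\,\norm{g(s)}^2_H\, ds,
\]
and then apply Lemma \ref{Lemma 3.1} together with $h\bigl(\norm{(y^0)''}_{L^2(0,1)}\bigr) < 1/2$ to obtain $\bigl|\int_0^1 F(y^0)/a\, dx\bigr| < \tfrac{1}{4}\norm{(y^0)''}^2_{L^2(0,1)}$. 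The leading quadratic contribution $\tfrac{1}{2}\norm{(y^0)''}^2_{L^2(0,1)}$ then absorbs the nonlinear term with a strictly positive surplus of $\tfrac{1}{4}\norm{(y^0)''}^2_{L^2(0,1)}$, while each remaining summand is non-negative; the non-triviality of $Y$ prevents their simultaneous vanishing, giving $E_y(0) > 0$.

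For \emph{(2)}, substituting the explicit form of $E_y(t)$ into \eqref{stima_dal_basso} shows the target inequality is equivalent to
\[
\left|\int_0^1\frac{F(y(t,x))}{a(x)}\,dx\right| < \frac{1}{4}\norm{y_{xx}(t)}^2_{L^2(0,1)} + \frac{1}{4}\norm{y_t(t)}^2_{L^2_{\frac{1}{a}}(0,1)} + \frac{\beta}{4}y^2(t,1) + \frac{\gamma}{4}y_x^2(t,1) + \frac{1}{4}\int_{t-\tau}^{t}|k(s+\tau)|\,\norm{B^*y_t(s)}^2_H\,ds,
\]
so by Lemma \ref{Lemma 3.1} it suffices to establish $h(\norm{y_{xx}(t)}_{L^2(0,1)}) < 1/2$ for every $t \in [0,\delta)$. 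I therefore set
\[
t^\ast := \sup\bigl\{\, t \in [0,\delta) : h(\norm{y_{xx}(s)}_{L^2(0,1)}) < 1/2 \text{ for every } s \in [0,t] \,\bigr\},
\]
which is strictly positive by continuity of $s \mapsto y_{xx}(s)$ in $L^2(0,1)$ (inherited from $Y \in \mathcal{C}([0,\delta);\mathcal{H}_0)$), continuity of $h$, and the assumption at $s=0$.

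Assume, for contradiction, $t^\ast < \delta$. On $[0,t^\ast)$ the threshold holds, hence \eqref{stima_dal_basso} holds there, and in particular $E_y(t) \ge \tfrac{1}{4}\norm{y_t(t)}^2_{L^2_{\frac{1}{a}}(0,1)}$; Theorem \ref{th E} then yields $E_y(t) \le C(t)E_y(0) \le C(T)E_y(0)$. Reading off only the $y_{xx}$-component of \eqref{stima_dal_basso} gives $\norm{y_{xx}(t)}_{L^2(0,1)} < 2\sqrt{C(T)E_y(0)}$ on $[0,t^\ast)$, so passing to the limit and invoking monotonicity of $h$ together with the standing hypothesis $h(2\sqrt{C(T)E_y(0)}) < 1/2$ produces $h(\norm{y_{xx}(t^\ast)}_{L^2(0,1)}) < 1/2$; continuity then propagates this strict inequality to a right-neighborhood of $t^\ast$, contradicting the maximality. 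Hence $t^\ast = \delta$ and \eqref{stima_dal_basso} holds on all of $[0,\delta)$; the final bound $E_y(t) > \tfrac{1}{4}\norm{Y(t)}^2_{\mathcal{H}_0}$ then follows by discarding the non-negative delay integral from the right-hand side and recognizing the expression of the $\mathcal{H}_0$-norm. The main obstacle is precisely the circular interdependence of the two estimates -- Lemma \ref{Lemma 3.1} converts an $H^2$-bound on $y$ into the lower energy bound \eqref{stima_dal_basso}, while Theorem \ref{th E} needs that very lower bound to give an upper bound on $E_y$, which in turn must feed back a bound on $\norm{y_{xx}}$ -- and the smallness condition $h(2\sqrt{C(T)E_y(0)}) < 1/2$ is exactly what makes the bootstrap loop contractive and closes the argument rigorously.
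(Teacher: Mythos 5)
Your proof is correct and follows essentially the same route as the paper: part (1) is the direct combination of Lemma \ref{Lemma 3.1} with the smallness of $h$ at the initial datum, and part (2) is the same continuity/maximal-time bootstrap closing the loop between Lemma \ref{Lemma 3.1}, Theorem \ref{th E} and the hypothesis $h(2\sqrt{C(T)E_y(0)})<\frac{1}{2}$. The only (immaterial) difference is that you run the sup-time argument on the scalar condition $h(\norm{y_{xx}(s)}_{L^2(0,1)})<\frac{1}{2}$, whereas the paper takes the supremum of times up to which \eqref{stima_dal_basso} itself holds and derives the contradiction from the equality forced at that time.
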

\begin{proof}
\underline{Claim $1.$} Let us consider a non trivial solution $Y$. By \eqref{stima nonlin} and the assumption $h\bigl (\norm{(y^0)''}_{L^2(0,1)}\bigr )<\frac{1}{2}$, we note that
\begin{equation*}
	\Biggl |\int_0^1\frac{F(y^0(x))}{a(x)}dx\Biggr |\le \frac{1}{2}h(\norm{(y^0)''}_{L^2(0,1)})\norm{(y^0)''}^2_{L^2(0,1)}<\frac{1}{4}\norm{(y^0)''}^2_{L^2(0,1)}.
\end{equation*}
As a consequence,
\begin{equation*}
	\begin{aligned}
		E_y(0)&=\frac{1}{2}\norm{y^1}^2_{L^2_\frac{1}{a}(0,1)}+\frac{1}{2}\norm{(y^0)''}^2_{L^2(0,1)}+\frac{\beta}{2}y^2(0,1)+\frac{\gamma}{2}y_x^2(0,1)\\
		&-\int_0^1\frac{F(y^0(x))}{a(x)} dx+\frac{1}{2}\int_{-\tau}^0 |k(s+\tau)|\cdot \norm{B^*y_t(s)}^2_{H}ds\\
		&>\frac{1}{4}\norm{y^1}^2_{L^2_\frac{1}{a}(0,1)}+\frac{1}{4}\norm{(y^0)''}^2_{L^2(0,1)}+\frac{\beta}{4}y^2(0,1)+\frac{\gamma}{4}y_x^2(0,1)\\
		&+\frac{1}{4}\int_{-\tau}^0 |k(s+\tau)|\cdot \norm{B^*y_t(s)}^2_{H}ds.
	\end{aligned}
\end{equation*}
In particular, $E_y(0)>0$.

\underline{Claim $2.$} Let us denote
\begin{equation*}
	r:=\sup\{s\in [0,\delta): \eqref{stima_dal_basso} \text{ holds } \forall\; t\in [0,s)\}
\end{equation*}
and we suppose, by contradiction, that $r<\delta$. Then, by continuity,
\begin{equation*}
	\begin{aligned}
		E_y(r)&=\frac{1}{4}\norm{y_t(r)}^2_{L^2_\frac{1}{a}(0,1)}+\frac{1}{4}\norm{y_{xx}(r)}^2_{L^2(0,1)}+\frac{\beta}{4}y^2(r,1)+\frac{\gamma}{4}y_x^2(r,1)\\
		&+\frac{1}{4}\int_{r-\tau}^r |k(s+\tau)|\cdot \norm{B^*y_t(s)}^2_{H}ds;
	\end{aligned}
\end{equation*}
in particular,
\begin{equation*}
	\frac{1}{4}\norm{y_{xx}(r)}^2_{L^2(0,1)}\le E_y(r) \quad \text{ and } \quad \frac{1}{4}\norm{y_t(r)}^2_{L^2_\frac{1}{a}(0,1)}\le E_y(r).
\end{equation*}
Hence, by Theorem \ref{th E} and using the monotonicity of $h$, we have
\begin{equation*}
	h(\norm{y_{xx}(r)}_{L^2(0,1)})\le h\Bigl (2\sqrt{E_y(r)}\Bigr )\le h\Bigl (2\sqrt{C(T)E_y(0)}\Bigr )<\frac{1}{2}.
\end{equation*}
Therefore, using the definition of $E_y$, by the previous inequality and \eqref{stima nonlin}, we conclude that
\begin{equation*}
	\begin{aligned}
		E_y(r)&=\frac{1}{2}\norm{y_t(r)}^2_{L^2_\frac{1}{a}(0,1)}+\frac{1}{2}\norm{y_{xx}(r)}^2_{L^2(0,1)}+\frac{\beta}{2}y^2(r,1)+\frac{\gamma}{2}y_x^2(r,1)\\
		&-\int_0^1\frac{F(y(r,x))}{a(x)} dx+\frac{1}{2}\int_{r-\tau}^r |k(s+\tau)|\cdot \norm{B^*y_t(s)}^2_{H}ds\\
		&>\frac{1}{4}\norm{y_t(r)}^2_{L^2_\frac{1}{a}(0,1)}+\frac{1}{4}\norm{y_{xx}(r)}^2_{L^2(0,1)}+\frac{\beta}{4}y^2(r,1)+\frac{\gamma}{4}y_x^2(r,1)\\
		&+\frac{1}{4}\int_{r-\tau}^r |k(s+\tau)|\cdot \norm{B^*y_t(s)}^2_{H}ds.
	\end{aligned}
\end{equation*}
This is not possible due to the maximality of $r$; consequently $r=\delta$  and the thesis is proved.
\end{proof}

Thanks to the previous results, we are ready to prove well-posedness and  exponential stability of solutions to \eqref{(P_abstract)} corresponding to sufficiently small initial data.  Note that the following theorem proves both well-posedness and exponential stability via an iterative argument.
Indeed, we first show that Hypothesis \ref{ipo Mbe+WP}.2 is satisfied, for small initial data, on  a finite interval $(0,T).$ Then, we apply the exponential decay estimate of Theorem \ref{Th Duhamel} to show that the solutions remain small enough. Therefore,  we can iterate the argument on successive time intervals obtaining, finally, global solutions exponentially decaying.

\begin{Theorem} \label{global} Assume Hypotheses \ref{ipo k}, \ref{ipo f} and
	\eqref{ipo Mbe} and consider \eqref{(P_abstract)} with initial data $Y^0\in\mathcal{H}_0$ and $\psi\in \mathcal{C}([-\tau,0]; \mathcal{H}_0)$. Then  \eqref{(P_abstract)} satisfies Hypothesis \ref{ipo Mbe+WP}.2 and, if the initial data are sufficiently small,  the corresponding solutions exist and decay  exponentially  in $(0, +\infty)$ according to  \eqref{exp decay}.
\end{Theorem}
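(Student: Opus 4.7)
The plan is to combine the local existence result of Proposition \ref{th esist loc}, the coercivity of the energy from Proposition \ref{Prop E}, the energy upper bound of Theorem \ref{th E}, and the abstract Duhamel-based decay of Theorem \ref{Th Duhamel}. The strategy is to first verify Hypothesis \ref{ipo Mbe+WP}.2 on a large but finite interval $[0,T]$ via an a priori energy bound, and then exploit the resulting exponential decay to iterate the scheme on successive intervals, producing a global solution that satisfies \eqref{exp decay}.

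First I would select the structural constants $T,\rho,C_\rho$ (in this order) so that the necessary smallness conditions coexist. Replacing $L(r)$ by $\sup_{0<s\le r}L(s)$ if needed, we may assume $L$ non-decreasing, and choose $C_\rho$ so small that $L(C_\rho)<(\omega-\omega')/M$. Next, fix $T$ so large that
\begin{equation*}
Me^{\alpha}\bigl(1+be^{\omega\tau}\sqrt{\Lambda}\bigr)e^{-(\omega-\omega'-ML(C_\rho))T}\le \frac{1}{2}.
\end{equation*}
Finally, pick $\rho$ so small that, whenever \eqref{ipo WP} holds, one has $2\sqrt{C(T)E_y(0)}\le C_\rho$, $h(\|(y^0)''\|_{L^2(0,1)})<1/2$ and $h(2\sqrt{C(T)E_y(0)})<1/2$, where $C(T)$ is from \eqref{function C}. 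These conditions are mutually compatible because Lemma \ref{Lemma 3.1} makes $E_y(0)$ of order $\rho^2$ and $L,h$ are continuous.

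With the constants so chosen, given data satisfying \eqref{ipo WP}, Proposition \ref{th esist loc} produces a local solution $Y$ on some $[0,\delta)$. On this interval Proposition \ref{Prop E}.2 yields $E_y(t)>\tfrac14\|Y(t)\|_{\mathcal{H}_0}^2$, while Theorem \ref{th E} yields $E_y(t)\le C(T)E_y(0)$; hence $\|Y(t)\|_{\mathcal{H}_0}\le 2\sqrt{C(T)E_y(0)}\le C_\rho$ on $[0,\delta)$. This a priori bound, together with repeated applications of Proposition \ref{th esist loc}, extends $Y$ to all of $[0,T]$, so Hypothesis \ref{ipo Mbe+WP}.2 is satisfied there with constants $\rho,C_\rho$, and Theorem \ref{Th Duhamel} delivers \eqref{exp decay} on $[0,T]$.

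For the iteration, I would restart the problem at $t=T$ with initial datum $Y(T)$ and new history $B^{*}y_t(T+\cdot)$. The decay estimate \eqref{exp decay}, combined with a Cauchy--Schwarz bound on the $\psi$-integral using Hypothesis \ref{ipo k} and $\|\psi(s)\|_{\mathcal{H}_0}\le b\|g(s)\|_{H}$, together with the choice of $T$, forces $\|Y(T)\|_{\mathcal{H}_0}\le\rho/2$. A change of variable $z=s+T-\tau$ rewrites the new history integral in \eqref{ipo WP} as $\int_{T-\tau}^{T}|k(z+\tau)|\|B^{*}y_t(z)\|_H^{2}\,dz\le 2E_y(T)\le 2C(T)E_y(0)$, which is again $<\rho^{2}/2$ by the smallness of the initial data. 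Hence the restarted data satisfy \eqref{ipo WP} with the same $\rho$ and $C_\rho$, and we may iterate indefinitely on $[nT,(n+1)T]$; concatenating the exponential estimates on successive intervals yields \eqref{exp decay} for every $t\ge 0$. The main obstacle is precisely this iterative bookkeeping: besides verifying the compatibility of the three smallness conditions, one must identify the weighted history norm of the new data with a quantity dominated by $E_y$, which is possible only thanks to the inclusion of the weighted delay term $\tfrac12\int_{t-\tau}^{t}|k(s+\tau)|\|B^{*}y_t(s)\|_H^{2}\,ds$ in the definition of $E_y$.
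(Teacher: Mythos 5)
Your overall architecture coincides with the paper's: fix the structural constants, obtain an a priori bound on $[0,\delta)$ by combining the lower bound of Proposition \ref{Prop E} with the upper bound of Theorem \ref{th E}, extend the solution to $[0,T]$, invoke Theorem \ref{Th Duhamel}, and restart at $t=T$. Up to the end of the interval $[0,T]$ your argument is essentially the paper's proof, and your bound $\norm{Y(T)}_{\mathcal{H}_0}\le\rho/2$ via \eqref{exp decay} and the H\"older estimate on the $\psi$-integral is also the paper's.

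There is, however, a genuine gap in the restart step. You bound the new history integral by
\[
\int_{T-\tau}^{T}|k(s+\tau)|\,\norm{B^{*}y_t(s)}_H^{2}\,ds\le 2E_y(T)\le 2C(T)E_y(0)
\]
and claim this is $<\rho^{2}/2$ ``by the smallness of the initial data''. This cannot work. First, \eqref{stima_dal_basso} carries the coefficient $\tfrac14$, so the correct constant is $4E_y(T)$, not $2E_y(T)$; more importantly, $C(T)=e^{2\int_0^T b^2(|k(s)|+|k(s+\tau)|)ds}\ge 1$ and is nondecreasing in $T$, while $E_y(0)$ is only known to satisfy $E_y(0)\le\rho^2$ and can be of that order when the data saturate \eqref{ipo WP}. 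Hence $4C(T)E_y(0)$ may well exceed $\rho^2$, and since the iteration requires the restarted data to satisfy \eqref{ipo WP} with the \emph{same} $\rho$, a bound that multiplies $\rho^2$ by a constant $\ge 1$ at each step cannot close; shrinking the initial data further does not help, because the defect reappears at every iteration. The underlying issue is that the energy upper bound of Theorem \ref{th E} does not decay in time, so it cannot supply smallness at $t=T$. The paper instead estimates $\norm{B^{*}y_t(s)}_H^2\le b^2\norm{Y(s)}^2_{\mathcal{H}_0}$ and applies the exponential decay \eqref{3.17'} on $[T-\tau,T]$ together with $\int_{T-\tau}^{T}|k(s+\tau)|ds\le\Lambda$ from Hypothesis \ref{ipo k}; this yields a factor $e^{-(\omega-\omega')T}$ in front of the history term, and $T$ is chosen large enough (the condition $C_T\le 1$) to absorb \emph{both} the $\norm{Y(T)}^2_{\mathcal{H}_0}$ contribution and the history contribution. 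Your choice of $T$ only controls $\norm{Y(T)}_{\mathcal{H}_0}$ and would have to be strengthened accordingly. A minor further point: the paper requires $L(C_\rho)<\frac{\omega-\omega'}{2M}$ rather than $<\frac{\omega-\omega'}{M}$, so that the decay rate in \eqref{exp decay} is bounded below by $\frac{\omega-\omega'}{2}$ uniformly along the iteration.
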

	\begin{proof}
		Let us consider a time $T>0$ sufficiently large such that
		\begin{equation*}
			C_{T}:=2M^2e^{2\alpha}(1+\Lambda e^{\omega\tau}b^2)(1+\Lambda e^{2\omega\tau} {b^2})e^{-(\omega-\omega')T}\le 1.
		\end{equation*}
	Furthermore, let $\rho >0$ be such that
	\begin{equation*}
		\rho \le \frac{1}{2\sqrt{C(T)}}h^{-1}\Biggl (\frac{1}{2}\Biggr ),
	\end{equation*}
where $C(\cdot)$ is the function introduced in \eqref{function C}, and consider initial data such that
\begin{equation*}
\norm{(y^0)''}^2_{L^2(0,1)}+{	\norm{y^1}^2_{L^2_\frac{1}{a}(0,1)}+\beta}y^2(0,1)+{\gamma}y_x^2(0,1)+\int_{-\tau}^0 |k(s+\tau)|\cdot \norm{g(s)}^2_{H}ds\le\rho^2.
\end{equation*}
Observe that the previous condition is equivalent to require
\begin{equation}\label{condizioneequivalente}
\norm{Y^0}^2_{\mathcal{H}_0}+\int_{-\tau}^0 |k(s+\tau)|\cdot \norm{g(s)}^2_{H}ds\le\rho^2.
\end{equation}
Now, by Proposition \ref{th esist loc} we know that there exists a local solution $y$ of \eqref{(P)} on a time interval $[0,\delta).$  Without loss of generality, we can assume $\delta < T$ (eventually, we can take a larger $T$). From our assumption on the initial data, on the monotonicity of $h$ and the fact that $\sqrt{C(T)}>1$, we have
\begin{equation*}
	h(\norm{(y^0)''}_{L^2(0,1)})\le h(\rho)\le h\Biggl (\frac{1}{2\sqrt{C(T)}}h^{-1}\Biggl (\frac{1}{2}\Biggr )\Biggr )<\frac{1}{2}.
\end{equation*}
Thus, by Proposition \ref{Prop E}.1, we deduce $E_y(0)>0$. Moreover, from \eqref{stima nonlin}, we obtain
\begin{equation}\label{tondino}
	\begin{aligned}
		E_y(0)&\le \frac{1}{2}\norm{y^1}^2_{L^2(0,1)}+\frac{3}{4}\norm{(y^0)''}^2_{L^2(0,1)}+\frac{\beta}{2}y^2(0,1)+\frac{\gamma}{2}y_x^2(0,1)\\
		&+\frac{1}{2}\int_{-\tau}^0 |k(s+\tau)|\cdot \norm{g(s)}^2_{H}ds\le\rho^2,
	\end{aligned}
\end{equation}
which implies
\begin{equation*}
	h\Bigl (2\sqrt{C(T)E_y(0)}\Bigr )<h\Bigl (2\sqrt{C(T)}\rho\Bigr )<h\Biggl (h^{-1}\Biggl (\frac{1}{2}\Biggr )\Biggr )=\frac{1}{2}.
\end{equation*}
Hence, we can apply Proposition \ref{Prop E} obtaining
\begin{equation}\label{15J1}
	\begin{aligned}
		E_y(t)&> \frac{1}{4}\norm{y_t(t)}^2_{L^2_\frac{1}{a}(0,1)}+\frac{1}{4}\norm{y_{xx}(t)}^2_{L^2(0,1)}+\frac{\beta}{4}y^2(t,1)+\frac{\gamma}{4}y_x^2(t,1)\\
		&+\frac{1}{4}\int_{t-\tau}^t |k(s+\tau)|\cdot \norm{B^*y_t(s)}^2_{H}ds>0,
	\end{aligned}
\end{equation} 
for all $t \in [0,\delta)$; in particular, $E_y(t)\ge\frac{1}{4}\norm{y_t(t)}^2_{L^2_{\frac{1}{a}}(0,1)}$. Thus, we can apply 
Theorem \ref{th E} obtaining
\begin{equation}\label{15J2}
E_y(t)\le C(T)E_y(0), \quad \forall \, t\in [0, \delta)
\end{equation}
(recall that $\delta < T$).
As a consequence, from \eqref{15J1}  and \eqref{15J2}, we have
\begin{equation}\label{3.16}
	\begin{aligned}
		\frac{1}{4}\norm{y_{xx}(t)}^2_{L^2(0,1)} &
		\le \frac{1}{4}\norm{y_t(t)}^2_{L^2_\frac{1}{a}(0,1)}+\frac{1}{4}\norm{y_{xx}(t)}^2_{L^2(0,1)}\\
		&\le \frac{1}{4}\norm{y_t(t)}^2_{L^2_\frac{1}{a}(0,1)}+\frac{1}{4}\norm{y_{xx}(t)}^2_{L^2(0,1)}+\frac{\beta}{4}y^2(t,1)+\frac{\gamma}{4}y_x^2(t,1)\\
		&+\frac{1}{4}\int_{t-\tau}^t |k(s+\tau)|\cdot \norm{B^*y_t(s,x)}^2_{H}ds<E_y(t)\le C(T)E_y(0),
	\end{aligned}
\end{equation}
for every $t\in [0,\delta)$. Then, we can extend the solution in $t=\delta$ and on the whole interval $[0,T]$. In particular, for $t=T$, we get
\begin{equation*}
	\begin{aligned}
		h(\norm{y_{xx}(T)}_{L^2(0,1)})&
		\le h\Bigl (2\sqrt{C(T)E_y(0)}\Bigr )<\frac{1}{2}.
	\end{aligned}
\end{equation*}
By \eqref{tondino} and \eqref{3.16} we deduce
\begin{equation*}
	\frac{1}{4}\norm{Y(t)}^2_{\mathcal{H}_0}\le E_y(t)\le C(T)E_y(0)\le C(T)\rho^2,\,\,\,\,\,\,\,\,\,\forall\; t\in [0,T],
\end{equation*}
i.e.
\begin{equation*}
	\norm{Y(t)}_{\mathcal{H}_0}\le  C_\rho,\,\,\,\,\,\,\,\,\,\forall\; t\in [0,T],
\end{equation*}
where $C_\rho:=2\sqrt{C(T)}\rho$. Now, without loss of generality, we can assume that $\rho$ is such that $L(C_\rho)<\frac{\omega-\omega'}{2M}$ (eventually choosing a smaller value of $\rho$). Recall that $L(C_\rho)$, $M$, $\omega$ and $\omega'$ are the constants considered in Hypothesis \ref{ipo f}, \eqref{stima S(t)} and \eqref{ipo Mbe}, respectively. Consequently, Hypothesis \ref{ipo Mbe+WP}.2 is satisfied in the interval $[0,T]$. Hence, Theorem \ref{Th Duhamel} gives us the following estimate:
\begin{equation}\label{exp decay1}
	\norm{Y(t)}_{\mathcal{H}_0}\le Me^\alpha \Biggl (	\norm{Y^0}_{\mathcal{H}_0}+\int_0^\tau e^{\omega s}|k(s)|\cdot \norm{\psi(s-\tau)}_{\mathcal{H}_0}ds \Biggr )e^{-\frac{\omega -\omega'}{2}t},
\end{equation}
for any $t\in [0,T]$. By Hypothesis \ref{ipo k}, \eqref{condizioneequivalente} and  the H\"older inequality, it follows
\begin{equation*}
	\begin{aligned}
	\int_0^\tau e^{\omega s}|k(s)|\cdot \norm{\psi(s-\tau)}_{\mathcal{H}_0}ds&\le e^{\omega \tau}\Biggl (	\int_0^\tau \!\!\!\!|k(s)|ds\Biggr )^{\frac{1}{2}}\!\!\!\Biggl (	\int_0^\tau\!\!\!\! |k(s)|\cdot \norm{\psi(s-\tau)}^2_{\mathcal{H}_0}ds\Biggr )^{\frac{1}{2}}\\
	&\le e^{\omega \tau}\sqrt{\Lambda} \rho b.
	\end{aligned}
\end{equation*}
Hence, coming back to \eqref{exp decay1}, we obtain
\begin{equation}\label{3.17'}
	\norm{Y(t)}^2_{\mathcal{H}_0}\le 2M^2e^{2\alpha}\rho^2(1+b^2e^{2\omega\tau}\Lambda)e^{-(\omega -\omega')t},
\end{equation}
for any $t\in [0,T]$. Moreover,
\begin{equation*}
	\int_{T-\tau}^{T}|k(s+\tau)|\cdot \norm{B^*y_t(s)}_{H}^2ds\le 2b^2M^2e^{2\alpha}\Lambda\rho^2 e^{\omega\tau}(1+\Lambda e^{2\omega\tau}b^2)e^{-(\omega -\omega')T}.
\end{equation*}
Since $T$ is chosen such that  $C_{T}\le 1$, by \eqref{3.17'} and the previous inequality, one has
\begin{equation*}
	\norm{Y(T)}^2_{\mathcal{H}_0}+\int_{T-\tau}^{T}|k(s+\tau)|\cdot \norm{B^*y_t(s)}_{H}^2ds\le C_{T}\rho^2\le\rho^2
\end{equation*}
or, equivalently,
\begin{equation*}
\begin{aligned}
	&\norm{y_t(T)}^2_{L^2_\frac{1}{a}(0,1)}+\norm{y_{xx}(T)}^2_{L^2(0,1)}+{\beta}y^2(T,1)\\&+{\gamma}y_x^2(T,1)+\int_{T-\tau}^{T}|k(s+\tau)|\cdot \norm{B^*y_t(s)}_{H}^2ds\le\rho^2.
\end{aligned}
\end{equation*}

We can apply a similar argument on the interval $[T,2T]$, obtaining a solution on the whole interval $[0,2T]$. Iterating the procedure, we obtain a unique global solution of \eqref{(P_abstract)}. Thus, the well posedness Hypothesis \ref{ipo Mbe+WP}.2 is satisfied   and the thesis follows.
	\end{proof}
\section{Delayed  beam equations with source term or integral nonlinearity}\label{Sez 4}
In this section we will apply  the abstract results of Section \ref{section} to two specific problems. To this aim, 
take as   $H$ the Hilbert space $L^2(\mathcal{P})$,  where $\mathcal{P}$ is an open subset strictly contained in $ (0,1)$ and define the bounded linear operator $B$ as
\begin{equation*}
	B:L^2(\mathcal{P})\to L^2_{\frac{1}{a}}(0,1)\,\,\,\,\,\,\,y\mapsto \tilde{y}\,\chi_\mathcal{P},
\end{equation*}
being $\tilde{y}\in L^2(0,1)$ the trivial extension of $y$ outside $\mathcal{P}$. It is easy to verify that
\begin{equation*}
	B^*(\varphi)=\varphi_{|\mathcal{P}}\,\,\,\,\,\,\,\forall\;\varphi\in (L^2_{\frac{1}{a}}(0,1))^*.
\end{equation*}

Hence, $BB^*(\varphi)=\chi_{\mathcal{P}}\varphi$ for all $\varphi\in (L^2_{\frac{1}{a}}(0,1))^*$. 

Moreover, for $f$ we  consider
two types of nonlinearities:
\begin{equation}\label{f1}
f(y(t,x))=	|y(t,x)|^qy(t,x), \end{equation}
with  $q>0$,
or
\begin{equation}\label{f2}
	f(y(t,x))=		\Bigl (\int_0^1|y(t,x)|^2dx\Bigr )^{\frac{p}{2}}y(t,x),
	\end{equation}
with $p\ge 1$. Hence, as a concrete example, we consider
\begin{equation*}
	\begin{cases}
		y_{tt}(t,x)+a(x)y_{xxxx}(t,x)+k(t)\chi_\mathcal{P}(x)y_t(t-\tau,x)=f(y(t,x)), &(t,x)\in Q,\\
		y(t,0)=0,\,\,y_x(t,0)=0, &t>0,\\
		\beta y(t,1)-y_{xxx}(t,1)+y_t(t,1)=0, &t >0,\\
		\gamma y_x(t,1)+y_{xx}(t,1)+y_{tx}(t,1)=0, &t >0,\\
		y(0,x)=y^0(x),\,\,y_t(0,x)=y^1(x),&x\in(0,1),\\
		y_t(s,x)=g(s), &s\in [-\tau,0],
	\end{cases}
\end{equation*}
where $\tau, \beta, \gamma, a, k$ are as in Section \ref{section}, $\chi_\mathcal{P}$ is the characteristic function of the set $\mathcal{P}$ and $f$ is defined as in \eqref{f1} or \eqref{f2}.
In the following we will prove that $f$ satisfies Hypothesis \ref{ipo f}.

First of all, assume \underline{$f(y):=|y|^qy, q>0$}.

Clearly, $f(0)=0$. Now, we will prove the other points of Hypothesis \ref{ipo f}. Observe that
\begin{equation}\label{stima0}
	\bigl | |\alpha|^q\alpha-|\beta|^q\beta \bigr |\le (q+1)\bigl (|\alpha|+|\beta|\bigr )^q|\alpha-\beta|,\,\,\,\,\,\,\,\forall\; \alpha,\beta\in\mathbb{R};
\end{equation}
moreover, for all $v \in H^2_{\frac 1 a, 0}(0,1)$, one has
 \begin{equation}\label{stima}
 |v(x)| \le \int_0^x\int_0^t |v''(s)|dsdt \le \int_0^x \sqrt{t}\|v''\|_{L^2(0,1)}dt \le \frac{2}{3} x^{\frac{3}{2}}\|v''\|_{L^2(0,1)},
 \end{equation}
 for all $x \in (0,1)$.
 Hence,  
\begin{equation*}
	\begin{aligned}
		\int_0^1\frac{1}{a}\bigl ||y|^qy-|z|^qz \bigr |^2\,dx&\le (q+1)^2\int_0^1\frac{1}{a}\bigl (|y|+|z|\bigr )^{2q}|y-z|^2dx\\
		&\le (q+1)^2 C_q\int_0^1\frac{1}{a}\bigl (|y|^{2q}+|z|^{2q}\bigr )|y-z|^2dx,
\end{aligned}
\end{equation*}
by \eqref{stima0}, where
\begin{equation}\label{Cq}
C_q:=
\begin{cases} 2^{2q-1}, &q\ge 1/2,\\
1, & q\in (0,1/2).
\end{cases}
\end{equation}
Then, thanks to the previous inequality, \eqref{stimanormeequi} and \eqref{stima}, one has
\begin{equation*}
\begin{aligned}
		&\int_0^1\frac{1}{a}\bigl ||y|^qy-|z|^qz \bigr |^2\,dx
\le \frac{2}{3}(q+1)^2 C_q \int_0^1 \!\!\!(\|y''\|_{L^2(0,1)}^{2q} + \|z''\|_{L^2(0,1)}^{2q})\frac{|y-z|^2}{a}dx\\
				&\le \frac{2}{3}(q+1)^2 C_q (4C_{HP}+1)\left( \left(\int_0^1|y''|^2dx\right)^q\!+\!\left(\int_0^1\!\!|z''|^2dx\right)^q \right)\int_0^1\!\!|y''-z''|^2dx,
	\end{aligned}
\end{equation*}
for all $y, z \in H^2_{\frac 1 a, 0}(0,1)$.
Now, fixing $r>0$ and taking  $y, z \in H^2_{\frac 1 a, 0}(0,1)$ such that $\|y''\|_{L^2(0,1)}, \|z''\|_{L^2(0,1)} \le r$, we obtain
\begin{equation*}
				\norm{f(y)-f(z)}_{L^2_{\frac{1}{a}}(0,1)}\le L(r)\norm{y''-z''}_{L^2(0,1)},
			\end{equation*}
being $L(r):=\sqrt{\frac{2}{3}(q+1)^2 C_q (4C_{HP}+1) r^{2q}}$.
Moreover, Hypothesis \ref{ipo f}.3 is also satisfied with $h(x):=\left(\frac{2}{3}\right)^q (4C_{HP}+1)x^q$. Indeed, by \eqref{stimanormeequi} and \eqref{stima}, one has
\[
\begin{aligned}
	\langle f(y), y\rangle_{L^2_{\frac{1}{a}}(0,1)}&=\int_0^1\frac{1}{a}|y|^{q+2}dx\le
	\left(\frac{2}{3}\right)^q\|y''\|_{L^2(0,1)}^q\int_0^1\frac{1}{a}|y|^{2}dx\\
	&\le \left(\frac{2}{3}\right)^q (4C_{HP}+1)
	\|y''\|_{L^2(0,1)}^{q+2},
	\end{aligned}
\]
for all $y \in H^2_{\frac 1 a, 0}(0,1)$.
Since $f$ satisfies Hypothesis \ref{ipo f}, one can apply the results of Section \ref{section} as soon as Hypotheses \ref{ipo k} and  \ref{ipo Mbe+WP} are satisfied.

Now, assume \underline{$f(y):=\Bigl (\int_0^1|y|^2dx\Bigr )^{\frac{p}{2}}y, \; p\ge 1$}.

Again $f(0)=0$.
Moreover, using \eqref{stimanormeequi},  
 one has 
 	\[	 \|u\|_{L^2_{\frac{1}{a}}(0, 1)}^2 \le (4C_{HP}+1) \|u''\|^2_{L^2(0,1)}, \quad \forall \; u \in H^2_{\frac{1}{a},0}(0, 1),
	\]
\begin{equation}\label{stima1}
\begin{aligned}
			&	\norm{f(y)-f(z)}_{L^2_{\frac{1}{a}}(0,1)}^2 = \int_0^1 \frac{1}{a}|\|y\|_{L^2(0,1)}^p y - \|z\|^p_{L^2(0,1)} z|^2 dx\\
		&=\int_0^1{\frac{1}{a}}\bigl (\norm{y}_{L^2(0, 1)}^{p}y-\norm{y}_{L^2(0, 1)}^{p}z+\norm{y}_{L^2(0, 1)}^{p}z-\norm{z}_{L^2(0, 1)}^{p}z \bigr )^2dx\\
		&\le 2\int_0^1{\frac{1}{a}}\norm{y}_{L^2(0, 1)}^{2p}(y-z)^2 dx+
2\int_0^1{\frac{1}{a}}(\norm{y}_{L^2(0, 1)}^{p}-\norm{z}_{L^2(0, 1)}^{p})^2|z|^2dx\\
		&\le 2\norm{y}_{L^2_{\frac{1}{a}}(0, 1)}^{2p}{\left(\max_{x \in [0,1]}a(x)\right)^p}\int_0^1\frac{1}{a}(y-z)^2 dx\\
		&+2(\norm{y}_{L^2(0, 1)}^{p}-\norm{z}_{L^2(0, 1)}^{p})^2 \int_0^1\frac{1}{a} |z|^2dx\\
		&\le 2{\left(\max_{x \in [0,1]}a(x)\right)^p}{(4C_{HP}+1)^{p+1}}\norm{y''}_{L^2(0, 1)}^{2p}\|y''-z''\|^2_{L^2(0,1)}\\
		&+2(\norm{y}_{L^2(0, 1)}^{p}-\norm{z}_{L^2(0, 1)}^{p})^2 \int_0^1\frac{1}{a} |z|^2dx,
				\end{aligned}
			\end{equation}
			for all $y, z \in H^2_{\frac 1 a, 0}(0,1)$.
			Now, consider the term $\norm{y}_{L^2(0, 1)}^{p}-\norm{z}_{L^2(0, 1)}^{p}$. By \eqref{stima0}, one has
\[
		\begin{aligned}
	&	\norm{y}_{L^2(0, 1)}^{p}-\norm{z}_{L^2(0, 1)}^{p}=	\norm{y}_{L^2(0, 1)}^{p-1}\norm{y}_{L^2(0, 1)}-	\norm{z}_{L^2(0, 1)}^{p-1}\norm{z}_{L^2(0, 1)}\\
		&\le p\bigl (\norm{y}_{L^2(0, 1)}+\norm{z}_{L^2(0, 1)} \bigr )^{p-1}|\norm{y}_{L^2(0, 1)}-\norm{z}_{L^2(0, 1)}|\\
		&\le p{C_{\frac{p}{2}}}(\norm{y}_{L^2(0, 1)}^{p-1}+\norm{z}_{L^2(0, 1)}^{p-1} \bigr )|\norm{y}_{L^2(0, 1)}-\norm{z}_{L^2(0, 1)}|\\
		&\le p{C_{\frac{p}{2}}} \left( \max_{x \in [0,1]} a(x)\right)^{\frac{p-1}{2}} \left(\norm{y}_{L^2_{\frac{1}{a}}(0, 1)}^{p-1}+\norm{z}_{L^2_{\frac{1}{a}}(0, 1)}^{p-1} \right)\norm{y-z}_{L^2(0, 1)},
		\end{aligned}	
\]
where {$C_{\frac{p}{2}}$ is defined as in \eqref{Cq}.}
Hence
\[
(\norm{y}_{L^2(0, 1)}^{p}-\norm{z}_{L^2(0, 1)}^{p})^2 \le D_p (\norm{y}_{L^2_\frac{1}{a}(0, 1)}^{2p-2}+\norm{z}_{L^2_\frac{1}{a}(0, 1)}^{2p-2} \bigr )\norm{y-z}_{L^2(0, 1)}^2,
\]
where {$D_p:= 2 p^2 {C_{\frac{p}{2}}} ^2\left( \max_{x \in [0,1]} a(x)\right)^{p-1} $},
and
\[
\begin{aligned}
&2(\norm{y}_{L^2(0, 1)}^{p}-\norm{z}_{L^2(0, 1)}^{p})^2 \int_0^1\frac{1}{a} |z|^2dx\\
& \le 2D_p\max_{x \in [0,1]}a(x)(\norm{y}_{L^2_\frac{1}{a}(0, 1)}^{2p-2}+\norm{z}_{L^2\frac{1}{a}(0, 1)}^{2p-2} \bigr )\norm{y-z}_{L^2_\frac{1}{a}(0, 1)}^2\norm{z}_{L^2_\frac{1}{a}(0, 1)}^2\\
&\le 2D_p\max_{x \in [0,1]}a(x)(4C_{HP}+1)^{p+1}(\norm{y''}_{L^2(0, 1)}^{2p-2}\!+\norm{z''}_{L^2(0, 1)}^{2p-2} \bigr )\|y''\!-\!z''\|^2_{L^2(0,1)}\|z''\|^2_{L^2(0,1)},
\end{aligned}
\]
by \eqref{stimanormeequi}.
By the previous inequality, \eqref{stima1} becomes
\[
\begin{aligned}
&	\norm{f(y)-f(z)}_{L^2_{\frac{1}{a}}(0,1)}^2 
	\le 2{\left(\max_{x \in [0,1]}a(x)\right)^p}{(4C_{HP}\!+\!1)^{p+1}}\norm{y''}_{L^2(0, 1)}^{2p}\|y''\!-\!z''\|^2_{L^2(0,1)}\\
		&+2D_p\max_{x \in [0,1]}a(x)(4C_{HP}\!+\!1)^{p+1}(\norm{y''}_{L^2(0, 1)}^{2p-2}+\norm{z''}_{L^2(0, 1)}^{2p-2} \bigr )\|y''\!-\!z''\|^2_{L^2(0,1)}\|z''\|^2_{L^2(0,1)},\end{aligned}
\]
for all $y, z \in H^2_{\frac 1 a, 0}(0,1)$. Now, fixing $r>0$ and taking  $y, z \in H^2_{\frac 1 a, 0}(0,1)$ such that $\|y''\|_{L^2(0,1)}, \|z''\|_{L^2(0,1)} \le r$, we obtain
\begin{equation*}
				\norm{f(y)-f(z)}_{L^2_{\frac{1}{a}}(0,1)}\le L(r)\norm{y''-z''}_{L^2(0,1)},
			\end{equation*}
being {$L(r):=\sqrt{2\max_{x \in [0,1]}a(x)(4C_{HP}+1)^{p+1}\left(\left(\max_{x \in [0,1]}a(x)\right)^{p-1}+ 2D_p\right)r^{2p} }$.}
Moreover, Hypothesis \ref{ipo f}.3 is also satisfied with \[h(x):=\left(\max_{x\in [0,1]}a(x)\right)^{\frac{p}{2}}(4C_{HP}+1)^{\frac{p}{2}+1}x^p.\] Indeed, using again \eqref{stimanormeequi}, one has
\[
\begin{aligned}
	\langle f(y), y\rangle_{L^2_{\frac{1}{a}}(0,1)}&=\|y\|^p_{L^2(0,1)}\int_0^1\frac{1}{a}|y|^{2}dx\\
	&\le
	\left(\max_{x\in [0,1]}a(x)\right)^{\frac{p}{2}}(4C_{HP}+1)^{\frac{p}{2}+1}\|y''\|_{L^2(0,1)}^{p+2},
			\end{aligned}\]
for all $y \in H^2_{\frac 1 a, 0}(0,1)$. As for the previous example, since $f$ satisfies Hypothesis \ref{ipo f}, one can apply the results of Section \ref{section} as soon as Hypotheses \ref{ipo k} and  \ref{ipo Mbe+WP} are satisfied.
\section{Some extensions}\label{Sez 5}
In this section we study the stability for a  non linear problem governed by a fourth order degenerate operator in divergence form or by a second order operator in divergence or in non divergence form. In every case the function $a$ is (WD) or (SD) and, as in Section \ref{sezione2}, the assumption $K<2$ is only a technical hypothesis (see \cite{ACL}, \cite{CF_ConStab} and \cite{FM}).
\subsection{The nonlinear degenerate Euler-Bernoulli equation in divergence form}
In this section we study the well posedness  and the stability for 
\begin{equation}\label{(P_4div)}
	\begin{cases}
		y_{tt}(t,x)+(ay_{xx})_{xx}(t,x)+k(t)BB^*y_t(t-\tau,x)=f(y(t,x)), &(t,x)\in Q,\\
		y(t,0)=0,&t> 0,\\
		\begin{cases}
			y_x(t,0)=0, &\text{ if } a \text{ is (WD)},\\
			(ay_{xx})(t,0)=0, &\text{ if } a \text{ is (SD)},\\ 
		\end{cases} &t> 0,\\
		\beta y(t,1)-(ay_{xx})_x(t,1)+y_t(t,1)=0, &t >0,\\
		\gamma y_x(t,1)+(ay_{xx})(t,1)+y_{tx}(t,1)=0, &t > 0,\\
		y(0,x)=y^0(x),\,\,y_t(0,x)=y^1(x),&x\in(0,1),
	\end{cases}
\end{equation}
where, as before, $Q:=(0,+\infty) \times (0,1)$, $\beta,\gamma \ge0$, $\tau >0$ is the time delay, $g$ is defined in $[-\tau,0]$ with value on a real  Hilbert space $H$ and $B: H \rightarrow L^2(0,1)$ is a bounded linear operator with adjoint $B^*$. 

As for the system in non divergence form, we consider first of all the problem without delay
 \begin{equation}\label{(P_4divund)}
 	\begin{cases}
 		y_{tt}(t,x)+(ay_{xx})_{xx}(t,x)=0, &(t,x)\in Q,\\
 		y(t,0)=0,&t>0,\\
 		\begin{cases}
 			y_x(t,0)=0, &\text{ if } a \text{ is (WD)},\\
 			(ay_{xx})(t,0)=0, &\text{ if } a \text{ is (SD)},\\ 
 		\end{cases} &t> 0,\\
 		\beta y(t,1)-(ay_{xx})_x(t,1)+y_t(t,1)=0, &t> 0,\\
 		\gamma y_x(t,1)+(ay_{xx})(t,1)+y_{tx}(t,1)=0, &t >0,\\
 		y(0,x)=y^0(x),\,\,y_t(0,x)=y^1(x),&x\in(0,1)
 	\end{cases}
 \end{equation} 
 and we introduce the Hilbert spaces needed for its study. 
 Thus, consider
 \[
 \begin{aligned}
 	V^2_a(0,1):=\{u\in H^1(0,1):& \; u' \text{ is absolutely continuous in [0,1]},\\
 	& \sqrt{a}u''\in L^2(0,1)\}
 \end{aligned}
 \]
 and 
 \[
 \begin{aligned}
 	K^2_a(0,1):&=\{u \in V^2_a(0,1): u(0)=0\}\\
 	&=\{u\in H^1(0,1): u' \text{ is absolutely continuous in [0,1]}, u(0)=0,\\
 	& \quad \;\; \sqrt{a}u''\in L^2(0,1)\},
 \end{aligned}
 \]
 if  $a$ is (WD);
 \[
 \begin{aligned}
 	V^2_a(0,1):=\{u\in H^1(0,1):& \; u' \text{ is locally absolutely continuous in } (0,1],\\
 	& \sqrt{a}u''\in L^2(0,1)\}
 \end{aligned}\]
 and
  \[
 \begin{aligned}
 	K^2_a(0,1):&=\{u \in V^2_a(0,1): u(0)=0\}\\
 	&=\{u\in H^1(0,1): u' \text{ is locally absolutely continuous in } (0,1], \\
 	& \quad \; \;u(0)=0, \sqrt{a}u''\in L^2(0,1)\},
 \end{aligned}\]
 if $a$ is (SD).

 In both cases we consider on $V^2_a(0,1)$ and $K^2_a(0,1)$ the norm
 \begin{equation*}
 	\|u\|^2_{2,a}:= \|u\|^2_{L^2(0,1)}+ \|u'\|^2_{L^2(0,1)}+ \|\sqrt{a}u''\|^2_{L^2(0,1)}, \quad \forall \; u \in V^2_a(0,1),
 \end{equation*}
 which is equivalent  to the following one
 \begin{equation*}
 	\|u \|_{2}^2:= \|u\|^2_{L^2(0,1)}+ \|\sqrt{a}u''\|^2_{L^2(0,1)}, \quad \forall \; u \in V^2_a(0,1)
 \end{equation*}
 (see \cite[Propositions 2.2 and 2.7]{CF_Wentzell}). Moreover, on $K^2_a(0,1)$ we can consider the equivalent norm 
 \[
 \|u\|_{2, \circ}^2 :=  |u'(1)|^2+\|\sqrt{a}u''\|^2_{L^2(0,1)}.
 \]

 The description of the functional setting is completed considering
\[
K_{a,0}^2(0,1):=\{ u \in K^2_a(0,1): u'(0)=0, \text{ when $a$ is (WD)}\},
\]

\[
\begin{aligned}
	\mathcal W_0 (0,1):=\{ u \in K^2_a(0,1): &\; au''\in H^2(0,1) \; \text{and} \; u'(0)=0 \text{ if $a$ is (WD), or}\\&
	\;(au'')(0)=0, \text{  if $a$ is (SD)}\}
\end{aligned}\]
and the product space 
\begin{equation*}
	\mathcal{K}_0:=K^2_{a,0}(0,1)\times L^2(0,1).
\end{equation*}
On $\mathcal{K}_0$ we consider  inner product and norm defined as:
\[
\langle (u,v),(\tilde{u},\tilde{v})\rangle_{\mathcal{K}_0}:=\int_{0}^{1}au''\tilde{u}''dx+\int_{0}^{1}v\tilde{v}\,dx+\beta u(1)\tilde{u}(1)+\gamma u'(1)\tilde{u}'(1)
\]
and 
\[
\|(u,v)\|^2_{\mathcal{K}_0}:=	\int_{0}^{1}a(u'')^2dx+\int_{0}^{1}v^2dx+\beta u^2(1)+\gamma (u')^2(1),
\]
for every $(u,v), (\tilde{u},\tilde{v})\in\mathcal{K}_0$, respectively.

Now, consider the operators $(A_d, D(A_d))$ given by $A_dy:= (ay_{xx})_{xx}$ for all 
$
y\in   D(A_d):=\mathcal W_0(0,1)$ and
$\mathcal{A}_d:D(\mathcal{A}_d)\subset\mathcal{K}_0\to \mathcal{K}_0$ defined as
\[	\mathcal{A}_d:=\begin{pmatrix}
	0 & Id \\
	-A_d & 0
\end{pmatrix},\]
with domain
\[
\begin{aligned}
	D(\mathcal{A}_d):=\{(u,v) \in D(A_d)\times  K^2_{a,0}(0,1): &\; \beta u(1)-(au'')'(1)+v(1)=0,\\ &\; \gamma u'(1)+(au'')(1)+v'(1)=0\}.
\end{aligned}
\]
Thanks to the next  Gauss Green formula
\begin{equation}\label{GF0new}
	\int_{0}^{1}(au'')''v\,dx=[(au'')'v](1) - [au''v'](1)+\int_{0}^{1}au''v''dx,
\end{equation}
for all $(u,v)\in \mathcal W_0 (0,1) \times K^2_{a,0}(0,1)$ (see \cite{CF_ConStab}),
	one can prove that  $(\mathcal{A}_d, D(\mathcal{A}_d))$ is non positive with dense domain and generates a contraction semigroup $(R(t))_{t\ge 0}$ assuming that $a$ is  (WD) or (SD).
Therefore, the following existence theorem holds.
\begin{Theorem}\label{Theorem 2.6new}
	Assume $a$ (WD) or (SD).
	If $(y^0,y^1)\in\mathcal{K}_0$, then there exists a unique mild solution
	\begin{equation*}
		y\in \mathcal{C}^1([0,+\infty);L^2(0,1))\cap \mathcal{C}([0,+\infty);K^2_{a,0}(0,1))
	\end{equation*}
	of (\ref{(P_4divund)}) which depends continuously on the initial data $(y^0,y^1)\in \mathcal{K}_0$. Moreover, if $(y^0,y^1)\in D(\mathcal{A}_d)$, then the solution $y$ is classical, in the sense that
	\begin{equation*}
		y\in \mathcal{C}^2([0,+\infty);L^2(0,1))\cap \mathcal{C}^1([0,+\infty);K^2_{a,0}(0,1))\cap \mathcal{C}([0,+\infty); D(A_d))
	\end{equation*}
	and the equation of (\ref{(P_4divund)}) holds for all $t\ge 0$.
\end{Theorem}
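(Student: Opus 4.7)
The plan is to argue that Theorem \ref{Theorem 2.6new} is a direct translation of the semigroup generation result for $(\mathcal{A}_d,D(\mathcal{A}_d))$ that has already been stated (following \cite{CF_ConStab} via the Gauss--Green formula \eqref{GF0new}). Writing $Y(t):=(y(t),y_t(t))^\top$ and $Y^0:=(y^0,y^1)^\top$, the problem \eqref{(P_4divund)} is recast as the abstract homogeneous Cauchy problem
\begin{equation*}
\dot Y(t)=\mathcal{A}_d Y(t),\qquad Y(0)=Y^0,
\end{equation*}
where the boundary conditions at $x=1$ in \eqref{(P_4divund)} are incorporated in $D(\mathcal{A}_d)$, and those at $x=0$ are built into the definition of $K^2_{a,0}(0,1)$ and $\mathcal{W}_0(0,1)$.

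Since $(\mathcal{A}_d,D(\mathcal{A}_d))$ is densely defined, dissipative and generates a contraction $C_0$-semigroup $(R(t))_{t\ge 0}$ on $\mathcal{K}_0$ (this is the content of the statement recalled just before the theorem), standard linear semigroup theory (see, e.g., \cite[Chapter~4]{Pazy}) yields for every $Y^0\in\mathcal{K}_0$ a unique mild solution $Y(t)=R(t)Y^0\in\mathcal{C}([0,+\infty);\mathcal{K}_0)$ depending continuously on $Y^0$, and for every $Y^0\in D(\mathcal{A}_d)$ a unique classical solution
\begin{equation*}
Y\in\mathcal{C}^1([0,+\infty);\mathcal{K}_0)\cap\mathcal{C}([0,+\infty);D(\mathcal{A}_d))
\end{equation*}
satisfying the abstract equation pointwise for every $t\ge 0$.

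It then remains to read off the claimed component-wise regularity for $y$. For the mild case, $Y\in\mathcal{C}([0,+\infty);\mathcal{K}_0)$ and the identity $y_t=v$ (second component of $Y$) give $y\in\mathcal{C}([0,+\infty);K^2_{a,0}(0,1))$ and $y_t\in\mathcal{C}([0,+\infty);L^2(0,1))$, whence $y\in\mathcal{C}^1([0,+\infty);L^2(0,1))\cap\mathcal{C}([0,+\infty);K^2_{a,0}(0,1))$. For the classical case, $Y\in\mathcal{C}^1([0,+\infty);\mathcal{K}_0)$ upgrades the second component to $y_t\in\mathcal{C}^1([0,+\infty);L^2(0,1))$, i.e.\ $y\in\mathcal{C}^2([0,+\infty);L^2(0,1))$, and analogously $y\in\mathcal{C}^1([0,+\infty);K^2_{a,0}(0,1))$; the inclusion $D(\mathcal{A}_d)\subset D(A_d)\times K^2_{a,0}(0,1)$ then yields $y\in\mathcal{C}([0,+\infty);D(A_d))$. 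Finally, reading the second line of $\dot Y(t)=\mathcal{A}_d Y(t)$ gives precisely $y_{tt}(t,\cdot)=-A_d y(t,\cdot)=-(ay_{xx})_{xx}(t,\cdot)$, i.e.\ the PDE in \eqref{(P_4divund)} holds for every $t\ge 0$.

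I expect no genuine obstacle here: the nontrivial analytical work (choice of the weighted functional spaces $K^2_{a,0}$, $\mathcal{W}_0$, dissipativity via \eqref{GF0new}, and the range condition for the resolvent of $\mathcal{A}_d$ in the degenerate setting) has already been carried out in \cite{CF_ConStab} and is what lies behind the statement that $(\mathcal{A}_d,D(\mathcal{A}_d))$ generates $(R(t))_{t\ge 0}$. The only point that deserves minor care is checking that the boundary conditions at $x=1$ of \eqref{(P_4divund)} are exactly those encoded in $D(\mathcal{A}_d)$, so that a classical solution of the abstract problem is a classical solution of \eqref{(P_4divund)} in the PDE sense, and that no additional compatibility is needed for mild solutions beyond $(y^0,y^1)\in\mathcal{K}_0$.
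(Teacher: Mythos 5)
Your proposal is correct and follows essentially the same route as the paper: the theorem is stated as a direct consequence of the fact that $(\mathcal{A}_d,D(\mathcal{A}_d))$ is non positive with dense domain and generates the contraction semigroup $(R(t))_{t\ge 0}$ (established via the Gauss--Green formula \eqref{GF0new} as in \cite{CF_ConStab}), with the mild/classical regularity read off from standard linear semigroup theory exactly as you describe.
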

Hence, if  $a$ is (WD) or (SD), a unique mild solution $y$  of (\ref{(P_4divund)}) exists  and we can define the energy associated to the problem \eqref{(P_4divund)} as
\begin{equation*}
		\mathcal{E}_y(t):=\frac{1}{2}\int_0^1 \Biggl (y^2_t(t,x)+a(x)y^2_{xx}(t,x) \Biggr )dx+\frac{\beta}{2}y^2(t,1)+\frac{\gamma}{2}y_x^2(t,1),\quad\,\,\,\,\,\,\forall\;t\ge 0.
\end{equation*}
In addition, if $y$ is classical, then the energy is non increasing and		\[
\frac{d\mathcal{E}_y(t)}{dt}=-y^2_t(t,1)-y^2_{tx}(t,1),\quad\,\,\,\,\,\,\,\forall\;t\ge 0.
\]
In particular, the following stability result holds.

\begin{Theorem}\label{teoremaprincipale2}\cite[Theorem 4.5]{CF_ConStab}
	Assume $a$ (WD) or (SD), $\beta,\gamma>0$ and let $y$ be a mild solution of \eqref{(P_4divund)}. Then there exists a suitable constant $T_0>0$ such that 		\begin{equation}\label{Stabilità2}
		\mathcal{E}_y(t)\le \mathcal{E}_y(0)e^{1-\frac{t}{T_0}},
	\end{equation}
	for all $t\ge T_0$. If $a$ is (WD), then \eqref{Stabilità2} holds also assuming 
 $\beta,\gamma\ge 0$.
		\end{Theorem}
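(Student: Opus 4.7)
The plan is to prove an observability-type estimate via the multiplier method and then deduce exponential decay by the standard Komornik--Zuazua integral argument. By density of $D(\mathcal{A}_d)$ in $\mathcal{K}_0$ it suffices to prove the estimate for classical solutions, for which the energy identity $\frac{d\mathcal{E}_y}{dt}=-y_t^2(t,1)-y_{tx}^2(t,1)$ holds pointwise; in particular $\mathcal{E}_y$ is non-increasing and
\[
\int_0^T\bigl[y_t^2(t,1)+y_{tx}^2(t,1)\bigr]\,dt=\mathcal{E}_y(0)-\mathcal{E}_y(T)\le\mathcal{E}_y(0).
\]

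I would then multiply the equation $y_{tt}+(ay_{xx})_{xx}=0$ by the radial multiplier $xy_x$, possibly combined with a lower-order corrector like $y$ to close the bookkeeping, integrate over $(0,T)\times(0,1)$, and apply the Gauss--Green formula \eqref{GF0new} together with integration by parts in $t$. The target is an inequality of the form
\[
\int_0^T \mathcal{E}_y(t)\,dt \le C_1\bigl(\mathcal{E}_y(0)+\mathcal{E}_y(T)\bigr)+C_2\int_0^T\bigl[y_t^2(t,1)+y_{tx}^2(t,1)\bigr]\,dt.
\]
The coercivity on the left-hand side comes from a bulk term proportional to $\int(y_t^2+(xa'+3a)y_{xx}^2)\,dx$, which is positive since $|xa'|\le Ka<2a$ under the hypothesis $K<2$. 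The boundary contributions at $x=1$ are rewritten through $\beta y(t,1)-(ay_{xx})_x(t,1)+y_t(t,1)=0$ and $\gamma y_x(t,1)+(ay_{xx})(t,1)+y_{tx}(t,1)=0$ (here $\beta,\gamma>0$ is used to absorb $y^2(t,1)$ and $y_x^2(t,1)$ into the boundary part of the energy), whereas the contributions at $x=0$ are killed either directly by $y_x(t,0)=0$ in the (WD) case or by $(ay_{xx})(t,0)=0$ in the (SD) case, after invoking the Hardy--Poincar\'e-type inequality (whose validity is exactly what $K<2$ secures). Combining with Step~1 one gets $\int_0^T\mathcal{E}_y(t)\,dt\le C\mathcal{E}_y(0)$.

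Since $\mathcal{E}_y$ is non-increasing, $T\mathcal{E}_y(T)\le\int_0^T\mathcal{E}_y(t)\,dt\le C\mathcal{E}_y(0)$, so choosing $T_0$ large enough, essentially $T_0=eC$, gives $\mathcal{E}_y(T_0)\le e^{-1}\mathcal{E}_y(0)$. Iteration on the intervals $[nT_0,(n+1)T_0]$, legitimate by the semigroup property of $R(t)$, then produces $\mathcal{E}_y(t)\le\mathcal{E}_y(0)e^{1-t/T_0}$ for all $t\ge T_0$.

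The hardest part I expect is the boundary analysis at the degenerate endpoint $x=0$. In the (WD) regime the embedding $K^2_{a,0}(0,1)\hookrightarrow H^2(0,1)$ makes every trace at $0$ meaningful and the multiplier contributions there are controllable even for $\beta=\gamma=0$, which explains the improvement of the conclusion in that case; in the (SD) regime, instead, only $\sqrt{a}\,y_{xx}\in L^2$, so the boundary flux $(ay_{xx})y_x|_{x=0}$ makes sense only through the Neumann-type condition $(ay_{xx})(t,0)=0$, and a careful limit argument with weighted cut-offs near $0$ is required; this weakness at the degenerate end must be compensated by dissipative strength at the regular end, which is exactly why $\beta,\gamma>0$ is imposed.
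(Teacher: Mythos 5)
First, a point of reference: the paper does not prove this statement at all --- Theorem \ref{teoremaprincipale2} is quoted verbatim from \cite[Theorem 4.5]{CF_ConStab}, so there is no internal proof to compare against. Your strategy (energy identity at $x=1$, a multiplier estimate of observability type, then the Komornik-style integral/iteration argument) is indeed the standard route for such boundary-stabilization results and is almost certainly the one followed in the cited reference, so the overall architecture is sound.

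However, what you have written is a plan, not a proof, and the gap sits exactly where the theorem is hard. The multiplier identity is never derived: you do not fix the multiplier (you leave open whether a corrector $y$ is needed and with which coefficient), you do not compute the boundary terms at $x=1$ arising from $xy_x$ and show how the conditions $\beta y(t,1)-(ay_{xx})_x(t,1)+y_t(t,1)=0$ and $\gamma y_x(t,1)+(ay_{xx})(t,1)+y_{tx}(t,1)=0$ let you absorb them into $\mathcal{E}_y(0)+\mathcal{E}_y(T)$ and $\int_0^T[y_t^2(t,1)+y_{tx}^2(t,1)]\,dt$, and for the degenerate endpoint in the (SD) case you explicitly defer the argument (``a careful limit argument with weighted cut-offs near $0$ is required'') rather than give it; note that in the (SD) regime $y_x(0)$ need not even be a well-defined trace, and the vanishing of the flux terms relies on the factor $x$ in the multiplier together with the precise integrability coming from $\sqrt{a}\,y_{xx}\in L^2(0,1)$, none of which is checked. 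The claimed improvement to $\beta,\gamma\ge 0$ in the (WD) case is likewise asserted from a heuristic about traces rather than established. (A small sign slip: the bulk coefficient produced by $xy_x$ against $(ay_{xx})_{xx}$ is proportional to $3a-xa'$, not $3a+xa'$; positivity still follows from $|xa'|\le Ka$ with $K<2$, so this is harmless.) The concluding Komornik iteration is fine once the observability inequality is in hand, but as it stands the central estimate $\int_0^T\mathcal{E}_y(t)\,dt\le C\,\mathcal{E}_y(0)$ is not proved, only announced.
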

Under the conditions provided in the previous theorem, the exponential decay of solutions for \eqref{(P_4divund)} is uniform. In particular, as a consequence of Theorem \ref{teoremaprincipale2}, we know that the  $\mathcal{C}_0$-semigroup $(R(t))_{t\ge 0}$ generated by $(\mathcal{A}_d, D(\mathcal A_d))$ is exponentially stable in the sense of  \eqref{stima S(t)} for $\beta, \gamma >0$ in the strongly degenerate case and for $\beta, \gamma \ge 0$ in the weakly degenerate one.

Now, as in Section \ref{section}, we consider the delayed problem \eqref{(P_4div)}  and we rewrite it in an abstract form. To this aim,
define $v(t,x)$, $Y^0(x)$, $Y(t,x)$, $\psi(s),\mathcal{B}Y(t),\mathcal{F}(Y(t))$ as in  Section  \ref{section} and, thanks to $(\mathcal A_d, D(\mathcal A_d))$,  \eqref{(P_4div)} can be rewritten as\begin{equation}\label{(P_abstract div)}
	\begin{cases}
		\dot{Y}(t)=\mathcal{A}_dY(t)-k(t)\mathcal{B}Y(t-\tau)+\mathcal{F}(Y(t)), &(t,x) \in Q,\\
		Y(0)=Y^0, &x \in (0,1),\\
		\mathcal{B}Y(s)=\psi(s), &s \in [-\tau,0].
	\end{cases}
\end{equation}
Also in this case, if $Y^0\in\mathcal{K}_0$, the Duhamel formula \eqref{duhamel} still holds substituting the semigroup $(S(t))_{t \ge0}$ with $(R(t))_{t \ge0}$, and, setting
\begin{equation}\label{b'}
b:=	\norm{B}_{\mathcal{L}(H,L^2(0,1))}=\norm{B^*}_{\mathcal{L}(L^2(0,1),H)},
\end{equation}
one has again
\begin{equation*}
	\norm{\mathcal{B}}_{{\mathcal{L}(\mathcal{K}_0)}}=b^2.
\end{equation*}

In order to deal with well posedness and stability for \eqref{(P_4div)}, we make on $k$  the same assumption as before, i.e. Hypothesis \ref{ipo k}; on the other hand, Hypotheses \ref{ipo f} and \ref{ipo Mbe+WP} become:
\begin{Assumptions}\label{ipo f div}
	Let $f:K^2_{a, 0}(0,1)\to L^2(0,1)$ be a continuous function such that
	\begin{enumerate}
	\item $f(0)=0$;
		\item for all $r>0$ there exists a constant $L(r)>0$ such that, for all $u,v\in K^2_{a, 0}(0,1)$ satisfying $\norm{\sqrt{a}u''}_{L^2(0,1)}\le r$ and $\norm{\sqrt{a}v''}_{L^2(0,1)}\le r$, one has
		\begin{equation*}
			\norm{f(u)-f(v)}_{L^2(0,1)}\le L(r)\norm{\sqrt{a}u''-\sqrt{a}v''}_{L^2(0,1)};
		\end{equation*}
		\item there exists a strictly increasing continuous function $h: \R_+ \rightarrow \R _+$ such that 
		\begin{equation}\label{condizione div}
			\langle f(u), u\rangle_{L^2(0,1)}\le h(\norm{\sqrt{a}u''}_{L^2(0,1)})\norm{\sqrt{a}u''}^2_{L^2(0,1)}
		\end{equation}
		for all $u\in K^2_{a, 0}(0,1)$.
	\end{enumerate}
\end{Assumptions}

\begin{Assumptions}\label{ipo Mbe+WP div}
	Suppose that:
	\begin{enumerate}
	\item $\beta, \gamma >0$ if $a$ is (WD) or (SD) and  $\beta, \gamma \ge 0$ if $a$ is (WD);
		\item for any $t>0$ \begin{equation}\label{ipo Mbe div}
			Mb^2e^{\omega \tau}\int_0^t|k(s+\tau)|ds\le \alpha +\omega't
		\end{equation}
		for suitable constants $\alpha\ge 0$ and $\omega'\in [0,\omega)$, where $M$, $\omega$ and $b$ are the constants in \eqref{stima S(t)}, referred to the semigroup $(R(t))_{t\ge 0}$, and \eqref{b'}, respectively;
		\item there exist $T$, $\rho >0$, $C_\rho>0$, with $L(C_\rho)<\frac{\omega - \omega'}{M}$ such that if $Y^0\in\mathcal{K}_0$ and $\psi\in\mathcal{C}([-\tau,0];\mathcal{K}_0)$ satisfy
		\begin{equation}\label{ipo WP div}
			\norm{Y^0}^2_{\mathcal{K}_0}+\int_0^\tau |k(s)|\cdot \norm{g(s-\tau)}^2_{\mathcal{K}_0}ds<\rho^2,
		\end{equation}
		then \eqref{(P_abstract div)} has a unique solution $Y\in\mathcal{C}([0, T);\mathcal{K}_0)$ satisfying $\norm{Y(t)}_{\mathcal{K}_0}\le C_\rho$ for all $t\in [0, T)$.
	\end{enumerate}
\end{Assumptions}

Also in this case, thanks to Hypothesis \ref{ipo f div}, $\mathcal{F}(0)=0$ and for any $r>0$ there exists a constant $L(r)>0$ such that
\begin{equation*}
	\norm{\mathcal{F}(Y)-\mathcal{F}(Z)}_{\mathcal{K}_0}\le L(r)\norm{Y-Z}_{\mathcal{K}_0}
\end{equation*} 
whenever $\norm{Y}_{\mathcal{K}_0}\le r$, $\norm{Z}_{\mathcal{K}_0}\le r$. In particular, 

\[
	\norm{\mathcal{F}(Y)}_{\mathcal{K}_0}\le L(r)\norm{Y}_{\mathcal{K}_0}.
\]

Thanks to the Duhamel formula for \eqref{(P_abstract div)}, we obtain the following theorem whose proof is analogous to the one of Theorem \ref{Th Duhamel}, so we omit it.

\begin{Theorem}\label{Th Duhamel div}
	Assume Hypotheses \ref{ipo f div} and \ref{ipo Mbe+WP div}, $a$ (WD) or (SD)
	and consider  the initial data $(Y^0,\psi)$ satisfying \eqref{ipo WP div}.
	 Then every solution $Y$ of \eqref{(P_abstract div)}  is such that
	 	\begin{equation}\label{exp decay div}
		\norm{Y(t)}_{\mathcal{K}_0}\le Me^\alpha \Biggl (	\norm{Y^0}_{\mathcal{K}_0}+\int_0^\tau e^{\omega s}|k(s)|\cdot \norm{\psi(s-\tau)}_{\mathcal{K}_0}ds \Biggr )e^{-(\omega -\omega'-ML(C_\rho))t},
	\end{equation}
	for any $t\in [0, T)$.
\end{Theorem}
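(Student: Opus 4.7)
The plan is to mirror the proof of Theorem \ref{Th Duhamel} line by line, replacing the non-divergence semigroup $(S(t))_{t\ge 0}$ and phase space $\mathcal{H}_0$ by $(R(t))_{t\ge 0}$ and $\mathcal{K}_0$, respectively. First I would write down the Duhamel formula associated with \eqref{(P_abstract div)}, namely
\begin{equation*}
Y(t)=R(t)Y^0+\int_0^t R(t-s)k(s)\mathcal{B}Y(s-\tau)ds+\int_0^t R(t-s)\mathcal{F}(Y(s))ds,
\end{equation*}
valid for $Y^0\in\mathcal{K}_0$. Then I would invoke the exponential stability $\norm{R(t)}_{\mathcal{L}(\mathcal{K}_0)}\le Me^{-\omega t}$, which holds by Theorem \ref{teoremaprincipale2} under Hypothesis \ref{ipo Mbe+WP div}.1, together with the identity $\norm{\mathcal{B}}_{\mathcal{L}(\mathcal{K}_0)}=b^2$ and the Lipschitz bound $\norm{\mathcal{F}(Y(s))}_{\mathcal{K}_0}\le L(C_\rho)\norm{Y(s)}_{\mathcal{K}_0}$ on $[0,T)$ guaranteed by Hypotheses \ref{ipo f div} and \ref{ipo Mbe+WP div}.3, to reduce the Duhamel identity to a scalar integral inequality for $\norm{Y(t)}_{\mathcal{K}_0}$.

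Next, for $t\ge \tau$ I would split the integral containing $\mathcal{B}Y(s-\tau)$ into the part over $[0,\tau]$, where the history condition gives $\mathcal{B}Y(s-\tau)=\psi(s-\tau)$, and the part over $[\tau,t]$, where $\norm{\mathcal{B}Y(s-\tau)}_{\mathcal{K}_0}\le b^2\norm{Y(s-\tau)}_{\mathcal{K}_0}$; for $t<\tau$ only the history portion survives. Multiplying by $e^{\omega t}$ and performing the substitution $z=s-\tau$ in the delayed term lead to
\begin{equation*}
e^{\omega t}\norm{Y(t)}_{\mathcal{K}_0}\le M_0+Mb^2e^{\omega\tau}\int_0^t e^{\omega z}|k(z+\tau)|\cdot\norm{Y(z)}_{\mathcal{K}_0}dz+ML(C_\rho)\int_0^t e^{\omega s}\norm{Y(s)}_{\mathcal{K}_0}ds,
\end{equation*}
with $M_0:=M\norm{Y^0}_{\mathcal{K}_0}+M\int_0^\tau e^{\omega s}|k(s)|\cdot\norm{\psi(s-\tau)}_{\mathcal{K}_0}ds$, together with the analogous simpler inequality without the delayed term when $t<\tau$.

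Finally, I would apply Gronwall's lemma to $t\mapsto e^{\omega t}\norm{Y(t)}_{\mathcal{K}_0}$ and absorb the delayed contribution via \eqref{ipo Mbe div}, obtaining
\begin{equation*}
\norm{Y(t)}_{\mathcal{K}_0}\le M_0\exp\!\left(Mb^2e^{\omega\tau}\int_0^t|k(s+\tau)|ds+ML(C_\rho)t-\omega t\right)\le M_0e^{\alpha}e^{-(\omega-\omega'-ML(C_\rho))t},
\end{equation*}
which is precisely \eqref{exp decay div}. The functional-analytic ingredients (contractivity and exponential decay of $R(t)$, boundedness of $\mathcal{B}$, Lipschitz transfer from $f$ to $\mathcal{F}$) are the exact analogues of those used in the non-divergence case, so no step should present a genuine obstacle; the only mild care is in ensuring exponential stability of $(R(t))_{t\ge 0}$ in $\mathcal{K}_0$, which is exactly what Hypothesis \ref{ipo Mbe+WP div}.1 enforces by distinguishing the admissible ranges of $\beta$ and $\gamma$ in the (WD) and (SD) cases.
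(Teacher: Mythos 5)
Your proposal is correct and follows exactly the route the paper intends: the paper omits this proof, stating only that it is ``analogous to the one of Theorem \ref{Th Duhamel}'', and your argument is precisely that analogy carried out, with $(S(t))_{t\ge 0}$, $\mathcal{H}_0$ replaced by $(R(t))_{t\ge 0}$, $\mathcal{K}_0$ and the same splitting at $t=\tau$, substitution $z=s-\tau$, and Gronwall step. The supporting ingredients you cite (exponential stability of $(R(t))_{t\ge 0}$ under Hypothesis \ref{ipo Mbe+WP div}.1, $\norm{\mathcal{B}}_{\mathcal{L}(\mathcal{K}_0)}=b^2$, and the Lipschitz bound on $\mathcal{F}$) are all established in the paper, so there is nothing to add.
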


Now, consider the function $F$ defined in \eqref{Fgrande}; Lemma \ref{Lemma 3.1} becomes
\begin{Lemma}
	Assume  Hypothesis \ref{ipo f div} and $a$ (WD) or (SD). Then
	\begin{equation*}
		\Biggl |\int_0^1F(y)dx\Biggr |\le \frac{1}{2}h(\norm{\sqrt{a}y''}_{L^2(0,1)})\norm{\sqrt{a}y''}^2_{L^2(0,1)},
	\end{equation*}
	for all $y\in K^2_{a, 0}(0,1)$.
\end{Lemma}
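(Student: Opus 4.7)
The plan is to imitate the argument used in Lemma \ref{Lemma 3.1}, with the weighted inner product $\langle\cdot,\cdot\rangle_{L^2_{1/a}(0,1)}$ replaced by the unweighted $\langle\cdot,\cdot\rangle_{L^2(0,1)}$ and the control norm $\|y''\|_{L^2(0,1)}$ replaced by $\|\sqrt{a}y''\|_{L^2(0,1)}$, as dictated by the divergence-form functional setting and by estimate \eqref{condizione div}. The key identity is the one-parameter representation
\[
F(y(x))=\int_0^1 f(sy(x))\,y(x)\,ds,
\]
which follows from $\frac{d}{ds}F(sy)=f(sy)y$ together with $F(0)=0$ (a consequence of $f(0)=0$ in Hypothesis \ref{ipo f div}.1).

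Fix $y\in K^2_{a,0}(0,1)$. I would first integrate the above identity in $x$ and apply Fubini's theorem to recognize a scalar product in $L^2(0,1)$:
\[
\int_0^1 F(y(x))\,dx=\int_0^1 \int_0^1 f(sy(x))\,y(x)\,dx\,ds=\int_0^1 \langle f(sy),sy\rangle_{L^2(0,1)}\,\frac{ds}{s}.
\]
Next, I would apply assumption \eqref{condizione div} with the admissible test function $u=sy\in K^2_{a,0}(0,1)$ to obtain
\[
|\langle f(sy),sy\rangle_{L^2(0,1)}|\le h\bigl(\|\sqrt{a}(sy)''\|_{L^2(0,1)}\bigr)\|\sqrt{a}(sy)''\|_{L^2(0,1)}^2=h\bigl(s\|\sqrt{a}y''\|_{L^2(0,1)}\bigr)s^2\|\sqrt{a}y''\|_{L^2(0,1)}^2.
\]

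Finally, using the monotonicity of $h$ to bound $h(s\|\sqrt{a}y''\|_{L^2(0,1)})\le h(\|\sqrt{a}y''\|_{L^2(0,1)})$ for $s\in(0,1)$, dividing by $s$ and integrating would give the factor $\int_0^1 s\,ds=\frac{1}{2}$, producing the claimed bound. No genuine obstacle is expected: the only mild subtlety is ensuring that the integrand $s\mapsto\langle f(sy),sy\rangle_{L^2(0,1)}/s$ is integrable near $s=0$, but the quadratic-in-$s$ bound just derived shows the integrand is bounded by $s\,h(\|\sqrt{a}y''\|_{L^2(0,1)})\|\sqrt{a}y''\|_{L^2(0,1)}^2$, which removes the singularity and simultaneously justifies Fubini's theorem.
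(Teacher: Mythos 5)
Your proposal follows exactly the paper's own proof: the paper writes $\int_0^1F(y)\,dx=\int_0^1\langle f(sy),sy\rangle_{L^2(0,1)}\frac{ds}{s}$ and then applies \eqref{condizione div} together with $\int_0^1 s\,ds=\frac12$, just as you do. The only differences are that you make explicit the monotonicity step $h(s\|\sqrt{a}y''\|_{L^2(0,1)})\le h(\|\sqrt{a}y''\|_{L^2(0,1)})$ and the Fubini/integrability check near $s=0$, which the paper leaves implicit; this is the same argument in slightly more detail.
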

\begin{proof}
	Fix $y\in K^2_{a, 0}(0,1)$. Proceeding as in Lemma \ref{Lemma 3.1}, one has
	\begin{equation*}
		\begin{aligned}
			\int_0^1F(y)dx&=\int_0^1\int_0^1f(sy)y\,ds\,dx=\int_0^1\langle f(sy), sy\rangle_{L^2(0,1)}\frac{ds}{s}.
		\end{aligned}
	\end{equation*}
	Thus, by \eqref{condizione div},
	\begin{equation*}
		\begin{aligned}
			\Biggl |\int_0^1F(y)dx\Biggr |&\le \int_0^1h(\norm{\sqrt{a}y''}_{L^2(0,1)})s^2\norm{\sqrt{a}y''}^2_{L^2(0,1)}\frac{ds}{s}\\
			&	\le \frac{1}{2}h(\norm{\sqrt{a}y''}_{L^2(0,1)})\norm{\sqrt{a}y''}^2_{L^2(0,1)}.
		\end{aligned}
	\end{equation*}
\end{proof}

Using the function $F$ and under the well posedness assumption \eqref{ipo WP div}, we can define the energy associated to \eqref{(P_4div)} in the following way.
\begin{Definition}
	Let $y$ be a mild solution of (\ref{(P_4div)}) and define its energy  as
	\begin{equation*}
		\begin{aligned}
			E_y(t)&:=\frac{1}{2}\int_0^1 \Biggl (y^2_t(t,x)+a(x)y^2_{xx}(t,x) \Biggr )dx+\frac{\beta}{2}y^2(t,1)+\frac{\gamma}{2}y_x^2(t,1)\\
			&-\int_0^1F(y(t,x)) dx+\frac{1}{2}\int_{t-\tau}^t |k(s+\tau)|\cdot \norm{B^*y_t(s)}^2_{H}ds,\quad\,\,\,\,\,\,\forall\;t\ge 0.
		\end{aligned}
	\end{equation*}
\end{Definition}
For the energy the following estimate holds.

\begin{Theorem}\label{th E div}
	Assume Hypothesis \ref{ipo Mbe+WP div}.2,  $a$ (WD) or (SD)	and let $y$ be a mild solution to \eqref{(P_4div)} defined on a set $[0, T).$ If $E_y(t)\ge\frac{1}{4}\norm{y_t(t)}^2_{L^2(0,1)}$ for any $t\in [0, T),$ then
	\begin{equation*}
		E_y(t)\le C(t)E_y(0)\,\,\,\,\,\,\,\,\,\,\forall\; t\in [0, T),
	\end{equation*}
where $C(\cdot)$ is the function defined in \eqref{function C}.
\end{Theorem}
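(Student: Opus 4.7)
The plan is to follow the same pattern as the proof of Theorem \ref{th E} (the non-divergence analogue), adapting every step to the divergence-form setting. I would formally differentiate $E_y(t)$ with respect to $t$ along a classical solution, producing
\[
\frac{dE_y(t)}{dt} = \int_0^1 \bigl(y_t y_{tt} + a y_{xx} y_{xxt}\bigr)\,dx + \beta y(t,1)y_t(t,1) + \gamma y_x(t,1)y_{tx}(t,1) - \int_0^1 f(y)y_t\,dx + \frac{1}{2}|k(t+\tau)|\,\|B^* y_t(t)\|_H^2 - \frac{1}{2}|k(t)|\,\|B^* y_t(t-\tau)\|_H^2 .
\]
The appropriate tool here is the Gauss--Green formula \eqref{GF0new} applied with $u=y$ and $v=y_t$; it rewrites $\int_0^1 a y_{xx} y_{xxt}\,dx$ as $\int_0^1 (ay_{xx})_{xx}\,y_t\,dx - [(ay_{xx})_x y_t](1) + [ay_{xx} y_{tx}](1)$. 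The final result then follows, as usual, via Gronwall's inequality.

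Next I would substitute the PDE in \eqref{(P_4div)}, so that $\int_0^1 y_t [y_{tt} + (ay_{xx})_{xx}]\,dx = \int_0^1 f(y)y_t\,dx - k(t)\langle B^*y_t(t), B^*y_t(t-\tau)\rangle_H$, and insert the boundary identities $(ay_{xx})_x(t,1) = \beta y(t,1) + y_t(t,1)$ and $(ay_{xx})(t,1) = -\gamma y_x(t,1) - y_{tx}(t,1)$ extracted from \eqref{(P_4div)}. After cancellation of the $\beta y y_t$, $\gamma y_x y_{tx}$, and $\int_0^1 f(y)y_t\,dx$ contributions, the boundary terms collapse to $-y_t^2(t,1) - y_{tx}^2(t,1)$, and what remains is
\[
\frac{dE_y(t)}{dt} = -y_t^2(t,1) - y_{tx}^2(t,1) - k(t)\langle B^* y_t(t), B^* y_t(t-\tau)\rangle_H + \frac{1}{2}|k(t+\tau)|\,\|B^* y_t(t)\|_H^2 - \frac{1}{2}|k(t)|\,\|B^* y_t(t-\tau)\|_H^2.
\]

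I would then dominate the cross term by the Cauchy (Young) inequality, $|k(t)\langle B^*y_t(t), B^*y_t(t-\tau)\rangle_H| \le \tfrac{|k(t)|}{2}\|B^*y_t(t)\|_H^2 + \tfrac{|k(t)|}{2}\|B^*y_t(t-\tau)\|_H^2$, so that the $|k(t)|\,\|B^*y_t(t-\tau)\|_H^2$ terms cancel and the boundary damping $-y_t^2(t,1)-y_{tx}^2(t,1)$ is discarded, yielding
\[
\frac{dE_y(t)}{dt} \le \tfrac{1}{2}(|k(t+\tau)| + |k(t)|)\,\|B^* y_t(t)\|_H^2 \le \tfrac{b^2}{2}(|k(t+\tau)| + |k(t)|)\,\|y_t(t)\|_{L^2(0,1)}^2,
\]
using \eqref{b'} in the form $\|B^* y_t\|_H \le b\,\|y_t\|_{L^2(0,1)}$. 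Invoking the standing hypothesis $E_y(t)\ge \tfrac{1}{4}\|y_t(t)\|_{L^2(0,1)}^2$, this becomes $\frac{dE_y(t)}{dt} \le 2b^2(|k(t)|+|k(t+\tau)|)\,E_y(t)$, and Gronwall's Lemma yields $E_y(t)\le C(t)E_y(0)$ with $C(t)$ given by \eqref{function C}. The estimate is first proved for classical solutions and then extended to mild solutions by the standard density argument.

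I do not anticipate a serious obstacle: the structural change with respect to Theorem \ref{th E} amounts to replacing the weight $1/a$ in the kinetic energy and in the inner product by the unweighted $L^2$ counterpart, using $a$ as a weight on $y_{xx}^2$, and employing \eqref{GF0new} in place of \eqref{GF1}. The only point that needs mild care is the precise bookkeeping of the boundary contributions produced by \eqref{GF0new}, namely $[(ay_{xx})_x y_t](1)$ and $[ay_{xx} y_{tx}](1)$, which must be matched against the Wentzell-type boundary conditions of \eqref{(P_4div)}; since these conditions are designed precisely so that the two nondissipative boundary contributions cancel, no new technical difficulty arises.
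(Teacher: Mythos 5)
Your proposal is correct and follows essentially the same route as the paper's own proof: formal differentiation of $E_y$, the Gauss--Green formula \eqref{GF0new} with $v=y_t$, substitution of the PDE and the boundary conditions to reduce the boundary contributions to $-y_t^2(t,1)-y_{tx}^2(t,1)$, the Cauchy inequality to cancel the delayed term, and Gronwall's Lemma. The bookkeeping of the boundary terms you flag as the only delicate point is exactly what the paper carries out, and it works out as you describe.
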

\begin{proof}
	Let $y$ be a mild solution of \eqref{(P_4div)}. Differentiating formally $E_y$ with respect to $t$, using \eqref{GF0new} and the boundary conditions, we obtain
	\begin{equation*}
		\begin{aligned}
			\frac{dE_y(t)}{dt}&= \int_0^1\Big (
			y_t(t,x)y_{tt}(t,x) +a(x)y_{xx}(t)y_{xxt}(t,x)\Big ) dx\\&+\beta y(t,1)y_t(t,1)+\gamma y_x(t,1)y_{tx}(t,1)\\
			&-\int_0^1f(y(t,x))y_t(t,x) dx+\frac{1}{2}|k(t+\tau)|\cdot \norm{B^*y_t(s)}^2_{H}\\&-\frac{1}{2}|k(t)|\cdot \norm{B^*y_t(t-\tau)}^2_{H}\\
			&= \int_0^1\Big (
			y_t(t,x)y_{tt}(t,x) +(ay_{xx})_{xx}(t,x)y_{t}(t,x)\Big ) dx-(ay_{xx})_x(t,1)y_t(t,1)\\
			&+a(1)y_{xx}(t,1)y_{tx}(t,1)
			+y_t(t,1)[(ay_{xx})_x(t,1)-y_t(t,1)]\\&+y_{tx}(t,1)[-a(1)y_{xx}(t,1)-y_{tx}(t,1)]\\
			&-\int_0^1f(y(t,x))y_t(t,x) dx+\frac{1}{2}|k(t+\tau)|\cdot \norm{B^*y_t(s)}^2_{H}\\&-\frac{1}{2}|k(t)|\cdot \norm{B^*y_t(t-\tau)}^2_{H}\\
			&= \int_0^1\Big (
			y_t(t,x)y_{tt}(t,x) +(ay_{xx})_{xx}(t,x)y_{t}(t,x)\Big ) dx -y_t^2(t,1)-y_{tx}^2(t,1)\\
			&-\int_0^1f(y(t,x))y_t(t,x) dx+\frac{1}{2}|k(t+\tau)|\cdot \norm{B^*y_t(s)}^2_{H}\\&-\frac{1}{2}|k(t)|\cdot \norm{B^*y_t(t-\tau)}^2_{H}.
		\end{aligned}
	\end{equation*}
	Proceeding as in Theorem \ref{th E}, we have
		\begin{equation*}
		\begin{aligned}
			\frac{dE_y(t)}{dt}&=-y_t^2(t,1)-y_{tx}^2(t,1) -k(t)\left\langle B^*y_t(t),B^*y_t(t-\tau)\right\rangle_H\\
			&+\frac{1}{2}|k(t+\tau)|\cdot \norm{B^*y_t(t)}^2_H-\frac{1}{2}|k(t)|\cdot \norm{B^*y_t(t-\tau)}^2_H\\
			& \le \frac{1}{2}(|k(t+\tau)|+|k(t)|) \norm{B^*y_t(t)}^2_H\\
			&\le 2b^2(|k(t+\tau)|+|k(t)|)\frac{1}{4}\norm{y_t(t)}^2_{L^2{(0,1)}}\\
			&\le 2b^2(|k(t+\tau)|+|k(t)|)E_y(t)
	\end{aligned}
	\end{equation*}
	and the thesis follows as in Theorem \ref{th E}.
\end{proof}

We conclude this subsection proving the well posedness assumption (i.e. Hypothesis \ref{ipo Mbe+WP div}.2) for \eqref{(P_abstract div)}. To this aim, observe that Proposition \ref{th esist loc} still holds in this context. On the other hand, the analogue of Proposition \ref{Prop E} is the following one.

\begin{Proposition}\label{Prop E div}
	Assume Hypothesis \ref{ipo k}, $a$ (WD) or (SD) and consider \eqref{(P_abstract div)} with initial data $Y^0\in\mathcal{K}_0$ and $\psi\in \mathcal{C}([-\tau,0]; \mathcal{K}_0).$ Take $T>0$ and let $Y$ be a non trivial solution of \eqref{(P_abstract div)}  defined on $[0,\delta),$ with $\delta\le T$. The following statements hold: 
	\begin{enumerate}
		\item if $h\bigl (\norm{\sqrt{a}(y^0)''}_{L^2(0,1)}\bigr )<\frac{1}{2}$, then $E_y(0)>0$;
		\item if $h\bigl (\norm{\sqrt{a}(y^0)''}_{L^2(0,1)}\bigr )<\frac{1}{2}$ and $h(2\sqrt{C(T)E_y(0)})<\frac{1}{2}$, then
		\begin{equation*}
			\begin{array}{l}
				\displaystyle{E_y(t)>\frac{1}{4}\norm{y_t(t)}^2_{L^2(0,1)}+\frac{1}{4}\norm{\sqrt{a}y_{xx}(t)}^2_{L^2(0,1)}+\frac{\beta}{4}y^2(t,1)+\frac{\gamma}{4}y_x^2(t,1)}
				\\\displaystyle{
					\hspace{5,5 cm}+\frac{1}{4}\int_{t-\tau}^t |k(s+\tau)|\cdot \norm{B^*y_t(s)}^2_{H}ds}
			\end{array}
		\end{equation*}
		for all $t\in [0,\delta)$, being $C(\cdot)$ the function defined in \eqref{function C}. In particular,
		\begin{equation*}
			E_y(t)>\frac{1}{4}\norm{Y(t)}^2_{\mathcal{K}_0},\,\,\,\,\,\,\,\,\,\,\forall\; t\in [0,\delta).
		\end{equation*}
	\end{enumerate}
\end{Proposition}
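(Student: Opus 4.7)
The plan is to transpose, step by step, the proof of Proposition \ref{Prop E} to the divergence-form functional setting. The two ingredients that will drive both claims are the divergence-form analogue of Lemma \ref{Lemma 3.1} stated just before the proposition, which controls $\left|\int_0^1 F(y)\,dx\right|$ by $\frac{1}{2}h(\|\sqrt{a}y''\|_{L^2(0,1)})\|\sqrt{a}y''\|_{L^2(0,1)}^2$, together with Theorem \ref{th E div}, which provides the upper bound $E_y(t)\le C(t)E_y(0)$ whenever $E_y(t)\ge \frac{1}{4}\|y_t(t)\|_{L^2(0,1)}^2$.

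For Claim 1, I would simply substitute the initial datum $y^0$ into the above Lemma and use the hypothesis $h(\|\sqrt{a}(y^0)''\|_{L^2(0,1)})<\frac{1}{2}$ to deduce the strict bound $\left|\int_0^1 F(y^0)\,dx\right|<\frac{1}{4}\|\sqrt{a}(y^0)''\|_{L^2(0,1)}^2$. Inserting this estimate into the definition of $E_y(0)$ and keeping the remaining non-negative contributions immediately yields the strict positivity, together with the slightly stronger lower bound that will serve as a template for the second claim.

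For Claim 2, I would argue by a maximality/continuity contradiction, exactly as in Proposition \ref{Prop E}. Define $r$ as the supremum of those $s\in[0,\delta)$ for which the desired strict lower bound on $E_y(t)$ holds on $[0,s)$; Claim 1 guarantees $r>0$. Supposing by contradiction that $r<\delta$, continuity forces the strict inequality to collapse into an equality at $t=r$. From this equality one reads in particular that $E_y(t)\ge\frac{1}{4}\|y_t(t)\|_{L^2(0,1)}^2$ on $[0,r]$, so Theorem \ref{th E div} applies and yields $E_y(r)\le C(T)E_y(0)$. This in turn bounds $\|\sqrt{a}y_{xx}(r)\|_{L^2(0,1)}\le 2\sqrt{C(T)E_y(0)}$, and monotonicity of $h$ combined with the second smallness hypothesis then gives $h(\|\sqrt{a}y_{xx}(r)\|_{L^2(0,1)})<\frac{1}{2}$. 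A second application of the Lemma to $F(y(r,\cdot))$, plugged into the definition of $E_y(r)$, produces a strict inequality that contradicts the equality at $r$. Hence $r=\delta$ and the claimed lower bound holds throughout $[0,\delta)$; the final inequality $E_y(t)>\frac{1}{4}\|Y(t)\|_{\mathcal{K}_0}^2$ is then a mere restatement in terms of the norm of $\mathcal{K}_0$.

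I do not anticipate a serious obstacle here, since the argument is mechanical once the correct replacements are made: throughout the estimates, $\|u''\|_{L^2(0,1)}$ becomes $\|\sqrt{a}u''\|_{L^2(0,1)}$ and $\|\cdot\|_{L^2_{\frac{1}{a}}(0,1)}$ becomes $\|\cdot\|_{L^2(0,1)}$. The only point that deserves a careful check is that the boundary contributions $\frac{\beta}{4}y^2(t,1)$ and $\frac{\gamma}{4}y_x^2(t,1)$ propagate correctly through the bootstrap; this has already been handled inside Theorem \ref{th E div}, where the Gauss--Green formula \eqref{GF0new} supplies the same cancellations as \eqref{GF1} did in the non-divergence case, so no fresh boundary algebra is needed.
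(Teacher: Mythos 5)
Your proposal is correct and coincides with what the paper intends: the paper omits the proof of Proposition \ref{Prop E div} precisely because it is the verbatim transposition of the proof of Proposition \ref{Prop E}, with $\|u''\|_{L^2(0,1)}$ replaced by $\|\sqrt{a}u''\|_{L^2(0,1)}$, $\|\cdot\|_{L^2_{1/a}(0,1)}$ by $\|\cdot\|_{L^2(0,1)}$, and Lemma \ref{Lemma 3.1} and Theorem \ref{th E} by their divergence-form analogues. Both your Claim 1 (strict estimate on $\int_0^1 F(y^0)\,dx$ fed into the definition of $E_y(0)$) and your Claim 2 (maximality/continuity contradiction at $r$, using Theorem \ref{th E div} and the monotonicity of $h$ to re-establish $h(\|\sqrt{a}y_{xx}(r)\|_{L^2(0,1)})<\frac{1}{2}$) follow the paper's argument exactly.
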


Thanks to the previous result, we can prove the well posedness hypothesis \eqref{ipo WP div} for  \eqref{(P_4div)}.
\begin{Theorem} Assume Hypotheses \ref{ipo k}, \ref{ipo f div} and
	\eqref{ipo Mbe div}  and consider \eqref{(P_abstract div)} with initial data $Y^0\in\mathcal{K}_0$ and $\psi\in \mathcal{C}([-\tau,0]; \mathcal{K}_0)$. Then  \eqref{(P_abstract div)} satisfies Hypothesis \ref{ipo Mbe+WP div}.2 and, if the initial data are sufficiently small, the  corresponding  solutions exist and decay exponentially in $(0, +\infty)$ according to  \eqref{exp decay div}.
\end{Theorem}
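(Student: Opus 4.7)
The plan is to reproduce the iterative bootstrap argument of Theorem \ref{global} in the divergence-form setting, since each non-divergence ingredient has been provided with its divergence counterpart: the exponential stability of $(R(t))_{t \ge 0}$, the Duhamel estimate (Theorem \ref{Th Duhamel div}), the energy dissipation (Theorem \ref{th E div}), the local existence by semigroup theory (the divergence analogue of Proposition \ref{th esist loc}), and the quadratic lower bound on $E_y$ (Proposition \ref{Prop E div}). First I would fix a horizon $T > 0$ large enough that
\[
C_T := 2M^2 e^{2\alpha}(1 + \Lambda e^{\omega\tau} b^2)(1+\Lambda e^{2\omega\tau}b^2)\, e^{-(\omega-\omega')T} \le 1,
\]
with $M, \omega$ the exponential stability constants for $(R(t))_{t \ge 0}$ and $b$ as in \eqref{b'}. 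Then I would choose $\rho > 0$ small enough that $\rho \le \tfrac{1}{2\sqrt{C(T)}}\, h^{-1}(1/2)$ and $L(C_\rho) < (\omega - \omega')/(2M)$, where $C_\rho := 2\sqrt{C(T)}\,\rho$, and I would restrict attention to initial data satisfying \eqref{ipo WP div} with this $\rho$.

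Next I would invoke local existence to get a solution on some $[0,\delta)$ with $\delta \le T$. Monotonicity of $h$ together with the smallness of the initial data gives $h(\|\sqrt a (y^0)''\|_{L^2(0,1)}) < 1/2$, so Proposition \ref{Prop E div}.1 yields $E_y(0) > 0$. The divergence version of Lemma \ref{Lemma 3.1} then bounds $E_y(0) \le \rho^2$, so that $h(2\sqrt{C(T) E_y(0)}) < 1/2$ and Proposition \ref{Prop E div}.2 applies, producing
\[
E_y(t) > \tfrac{1}{4}\|Y(t)\|^2_{\mathcal K_0} \ge \tfrac{1}{4}\|y_t(t)\|^2_{L^2(0,1)}, \qquad t \in [0,\delta).
\]
The hypothesis of Theorem \ref{th E div} is therefore satisfied, giving the a priori bound $E_y(t) \le C(T) E_y(0)$, which permits extension of the solution to all of $[0, T]$ with $\|Y(t)\|_{\mathcal K_0} \le C_\rho$. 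This verifies Hypothesis \ref{ipo Mbe+WP div}.2 on the first block.

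With Hypothesis \ref{ipo Mbe+WP div}.2 in hand on $[0, T]$, Theorem \ref{Th Duhamel div} applies and, after estimating the history integral via Hölder's inequality and Hypothesis \ref{ipo k} (yielding the bound $e^{\omega\tau}\sqrt\Lambda\,\rho\,b$ for $\int_0^\tau e^{\omega s}|k(s)|\,\|\psi(s-\tau)\|_{\mathcal K_0}\,ds$), one obtains the estimate
\[
\|Y(T)\|^2_{\mathcal K_0} + \int_{T-\tau}^T |k(s+\tau)|\,\|B^* y_t(s)\|^2_H\,ds \;\le\; C_T\,\rho^2 \;\le\; \rho^2,
\]
so the smallness condition \eqref{ipo WP div} is reproduced at time $T$. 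The argument therefore iterates on $[T, 2T], [2T, 3T], \ldots$, yielding a unique global solution of \eqref{(P_abstract div)} in $\mathcal K_0$ that decays exponentially according to \eqref{exp decay div}.

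The routine but delicate point of the plan is the matching of constants at each iteration step, which is dictated by the choice of $T$ making $C_T \le 1$; the genuinely technical step, namely the algebra that forces the lower bound $E_y(t) \ge \tfrac{1}{4}\|y_t(t)\|_{L^2(0,1)}^2$ to persist on the full interval, is encapsulated in Proposition \ref{Prop E div} and relies on the Gauss--Green formula \eqref{GF0new} together with the boundary conditions of \eqref{(P_4div)}. Once those are taken as inputs, the proof is a mechanical transcription of the proof of Theorem \ref{global}, with $\|\sqrt a \,(\cdot)''\|_{L^2(0,1)}$ and $\|\cdot\|_{L^2(0,1)}$ replacing their weighted $L^2_{1/a}$-counterparts throughout.
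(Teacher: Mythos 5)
Your proposal is correct and is essentially the argument the paper intends: the paper states this theorem without proof precisely because it is the line-by-line transcription of the proof of Theorem \ref{global} to the divergence-form setting, with $(R(t))_{t\ge 0}$, $\mathcal K_0$, $\|\sqrt{a}\,(\cdot)''\|_{L^2(0,1)}$ and the divergence analogues of Lemma \ref{Lemma 3.1}, Theorem \ref{th E div}, Proposition \ref{Prop E div} and Theorem \ref{Th Duhamel div} replacing their non-divergence counterparts, exactly as you describe. Your choice of $T$ with $C_T\le 1$, of $\rho$ satisfying both smallness conditions, the bootstrap on $[0,\delta)\to[0,T]$, the H\"older estimate of the history term, and the reproduction of the smallness condition at $t=T$ to iterate all match the paper's scheme.
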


 \subsection{The degenerate second order problems}
The technique developed in the previous sections for nonlinear problems governed by fourth order degenerate operators can also be  applied to the ones governed by second order degenerate operators. This subsection is devoted to present the results in these  last cases.
\subsubsection{The second order problem in  non divergence form}
As a first step, we will consider the equation in non divergence form
\begin{equation}\label{(P_2nd)}
		\begin{cases}
			y_{tt}(t,x)-a(x)y_{xx}(t,x)+k(t)BB^*y_t(t-\tau,x)=f(y(t,x)), &(t,x)\in Q,\\
			y(t,0)=0, &t >0,\\
			\beta y(t,1)+y_{x}(t,1)+y_t(t,1)=0, &t >0,\\
			y(0,x)=y^0(x),\,\,y_t(0,x)=y^1(x),&x\in(0,1),\\
			B^*y_t(s,x)=g(s), &s\in [-\tau,0].
		\end{cases}
	\end{equation}
With reference to the spaces $L^2_{\frac{1}{a}}(0, 1), H^1_{\frac{1}{a}}(0, 1), H^1_{\frac{1}{a},0}(0, 1)$ defined in Section \ref{sezione2}, following \cite{FM}, we consider 
the operator
\begin{equation*}
		M_{nd}u:=au''
\end{equation*}
with domain
\begin{equation*}
D(M_{nd}):=\left\{u\in \mathcal K^2_{\frac{1}{a}}(0, 1): u(0)=0 \right\},
\end{equation*}
where $\mathcal K^2_{\frac{1}{a}}(0, 1)$ is the Hilbert space
\[\mathcal K^2_{\frac{1}{a}}(0, 1):=\biggl \{u\in H^1_{\frac{1}{a}}(0, 1):au''\in L^2_{\frac{1}{a}}(0, 1) \biggr \}\]
endowed with inner product and  norm given by
\[
\langle u,v\rangle_{\mathcal K^2_{\frac{1}{a}}(0, 1)}:= \langle u,v\rangle_{L^2_{\frac{1}{a}}(0, 1)}+ \langle u',v'\rangle_{L^2(0, 1)}+\langle M_{nd}u,M_{nd}v\rangle_{L^2_{\frac{1}{a}}(0, 1)}\]
and 
\[
\|u\|_{\mathcal K^2_{\frac{1}{a}}(0, 1)}^2:= \|u\|_{L^2_{\frac{1}{a}}(0, 1)}^2+ \|u'\|^2_{L^2(0,1)} + \|\sqrt{a}u''\|^2_{L^2(0,1)},
\]
for all $u,v \in  \mathcal K^2_{\frac{1}{a}}(0, 1)$.
Moreover, we consider the Hilbert space
 \begin{equation*}
	\mathcal{M}_0:=H^1_{\frac{1}{a},0}(0,1)\times L^2_{\frac{1}{a}}(0,1),
\end{equation*}
with natural inner product and norm defined by
\begin{equation*}
	\langle (u,v),(\tilde{u},\tilde{v})\rangle_{\mathcal{M}_0}:=\int_{0}^{1}u'\tilde{u}'dx+\int_{0}^{1}\frac{v\tilde{v}}{a}dx+\beta u(1)\tilde{u}(1)
\end{equation*}
and \begin{equation*}
	\|(u,v)\|^2_{mathcal{M}_0}:=\int_{0}^{1}(u')^2dx+\int_{0}^{1}\frac{v^2}{a}dx+\beta u^2(1),
\end{equation*}
for every $(u,v), \;(\tilde{u},\tilde{v})\in\mathcal{M}_0$. Finally, we consider the matrix operator $\mathcal{M}_{nd}:D(\mathcal{M}_{nd})\subset\mathcal{M}_0\to \mathcal{M}_0$ given by
\begin{equation*}
	\mathcal{M}_{nd}:=\begin{pmatrix}
		0 & Id \\
		M_{nd} & 0
	\end{pmatrix},
\end{equation*}
where
\begin{equation*}
	\begin{aligned}
		D(\mathcal{M}_{nd}):=\{(u,v)\in D(M_{nd})\times H^1_{\frac{1}{a},0}(0,1): &\;\beta u(1)+u'(1)+v(1)=0 \}.
	\end{aligned}
\end{equation*}

Thanks to the Gauss Green formula given  in \cite[Lemma 2.1]{FM}, one can prove that, if $a$ is (WD) or (SD), then  $(\mathcal M_{nd}, D(\mathcal M_{nd}))$ is non positive with dense domain and generates a contraction semigroup  $(\mathcal T(t))_{t \ge 0}$ (see \cite[Theorem 2.1]{FM}). 
Then, considered the undelayed problem
\begin{equation}\label{(P_2undelayed)}
	\begin{cases}
		y_{tt}(t,x)-a(x)y_{xx}(t,x)=0, &(t,x)\in Q,\\
		y(t,0)=0, &t>0,\\
		\beta y(t,1)+y_{x}(t,1)+y_t(t,1)=0, &t >0,\\
		y(0,x)=y^0(x),\,\,y_t(0,x)=y^1(x),&x\in(0,1)
	\end{cases}
\end{equation}
with associated energy
\begin{equation*}
	\mathcal{E}_y(t):=\frac{1}{2}\int_0^1 \Biggl (\frac{y^2_t(t,x)}{a(x)}+y^2_{x}(t,x) \Biggr )dx+\frac{\beta}{2}y^2(t,1),\quad\,\,\,\,\,\,\forall\;t\ge 0,
\end{equation*}
where $\beta \ge 0$, one has the following well posedness and stability result.

\begin{Theorem}\label{Theorem regol3}(see Theorems 2.3 and 3.2 in \cite{FM})
	Assume $a$ (WD) or (SD).
	If $(y^0,y^1)\in\mathcal{M}_0$, then there exists a unique mild solution
	\begin{equation*}
		y\in \mathcal{C}^1([0,+\infty);L^2_{\frac{1}{a}}(0,1))\cap \mathcal{C}([0,+\infty);H^1_{\frac{1}{a},0}(0,1))
	\end{equation*}
	of \eqref{(P_2undelayed)} which depends continuously on the initial data $(y^0,y^1)\in \mathcal{M}_0$. In this case, there exists $T_0>0$ such that 		\begin{equation*}
		\mathcal{E}_y(t)\le \mathcal{E}_y(0)e^{1-\frac{t}{T_0}},
	\end{equation*}
	for all $t\ge T_0$.	
Moreover,	if $(y^0,y^1)\in D(\mathcal{M}_{nd})$, the solution $y$ is classical, in the sense that
	\begin{equation*}
		y\in \mathcal{C}^2([0,+\infty);L^2_{\frac{1}{a}}(0,1))\cap \mathcal{C}^1([0,+\infty);H^1_{\frac{1}{a},0}(0,1))\cap \mathcal{C}([0,+\infty);D(M_{nd}))
	\end{equation*}
	and the equation of \eqref{(P_2undelayed)} holds for all $t\ge 0$. 
	 \end{Theorem}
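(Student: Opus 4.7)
My plan is first to recast \eqref{(P_2undelayed)} as the abstract Cauchy problem $\dot{Y}(t)=\mathcal{M}_{nd}Y(t)$, $Y(0)=(y^0,y^1)^T$, on the Hilbert space $\mathcal{M}_0$. Since the excerpt already records that $(\mathcal{M}_{nd},D(\mathcal{M}_{nd}))$ is dissipative, densely defined and generates a contraction semigroup $(\mathcal T(t))_{t\ge 0}$, the existence of a unique mild solution in $\mathcal C^1([0,+\infty);L^2_{\frac{1}{a}}(0,1))\cap \mathcal C([0,+\infty);H^1_{\frac{1}{a},0}(0,1))$ for $Y^0\in\mathcal{M}_0$, as well as of a classical solution with the stated higher regularity for $Y^0\in D(\mathcal{M}_{nd})$, follows from the standard $C_0$-semigroup theory (e.g.\ \cite[Chapter 4]{Pazy}). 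Continuous dependence on the initial data is immediate from the contraction bound $\|\mathcal T(t)\|_{\mathcal L(\mathcal M_0)}\le 1$.

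\textbf{Energy identity.} For a classical solution, I would differentiate $\mathcal E_y(t)$, substitute the equation $y_{tt}=a y_{xx}$, and integrate by parts once in the term $\int_0^1 y_t y_{xx}dx$. The boundary term at $x=0$ vanishes because $y(t,0)=0$ forces $y_t(t,0)=0$; at $x=1$ the remaining contribution combines with $\beta y(t,1)y_t(t,1)$ and, after inserting the dissipative boundary condition $\beta y(t,1)+y_x(t,1)+y_t(t,1)=0$, produces
\begin{equation*}
\frac{d\mathcal E_y(t)}{dt}=-y_t^2(t,1)\le 0.
\end{equation*}
Hence $\mathcal E_y$ is non-increasing along classical solutions, and by density along mild solutions in the integrated sense.

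\textbf{Exponential decay.} The core step is the observability-type estimate
\begin{equation*}
\int_0^T \mathcal E_y(t)\,dt \le C\int_0^T y_t^2(t,1)\,dt
\end{equation*}
for some sufficiently large $T>0$. I would obtain it by a multiplier argument tailored to the degeneracy: testing the equation against a multiplier of the form $x y_x$, weighted by $1/a$ so as to match the natural pairing in $L^2_{\frac{1}{a}}(0,1)$, and integrating over $(0,T)\times(0,1)$ yields an identity that bounds $\int_0^T \mathcal E_y dt$ by a boundary term at $x=1$, controlled by $y_t^2(t,1)$, plus interior remainders. Those remainders, including singular contributions coming from $a(0)=0$, are absorbed using the Hardy--Poincar\'e inequality available in the range $K<2$. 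Once this observability inequality is secured, the estimate $\mathcal E_y(t)\le \mathcal E_y(0)e^{1-t/T_0}$ follows by the classical Komornik-type argument combining the dissipation relation with the inverse inequality iteratively on the intervals $[kT_0,(k+1)T_0]$.

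\textbf{Principal obstacle.} The delicate point is precisely the behaviour of the multiplier identity as $x\to 0^+$: the degeneracy of $a$ at the origin prevents the direct use of the standard non-degenerate multiplier, and one must verify that the trace $\lim_{x\to 0^+}[\,\cdot\,]$ appearing after integration by parts vanishes in both the weakly and strongly degenerate regimes. This is where the dichotomy $K\in(0,1)$ versus $K\in[1,2)$ enters and also explains the technical threshold $K<2$; the rest of the multiplier bookkeeping is routine once these limit computations are carried out with the weights and functional settings introduced in \cite{FM}.
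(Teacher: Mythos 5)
The paper does not prove this theorem at all: it is imported verbatim as a citation of Theorems 2.3 and 3.2 of \cite{FM}, so there is no ``paper's own proof'' to compare against. Your outline is nonetheless a faithful reconstruction of the strategy used in that reference: well-posedness and continuous dependence follow from the fact that $(\mathcal{M}_{nd},D(\mathcal{M}_{nd}))$ generates a contraction semigroup on $\mathcal{M}_0$ (which the paper itself records, via the Gauss--Green formula of \cite[Lemma 2.1]{FM} and Lumer--Phillips), and your energy computation is correct: differentiating $\mathcal{E}_y$, substituting $y_{tt}=ay_{xx}$, integrating by parts and using $y(t,0)=0$ together with $\beta y(t,1)+y_x(t,1)+y_t(t,1)=0$ does give $\frac{d\mathcal{E}_y}{dt}=-y_t^2(t,1)$ for classical solutions, extended to mild ones by density. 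The decay estimate in the precise form $\mathcal{E}_y(t)\le\mathcal{E}_y(0)e^{1-t/T_0}$ then follows, as you say, from an integral inequality $\int_S^{+\infty}\mathcal{E}_y\,dt\le T_0\,\mathcal{E}_y(S)$ combined with monotonicity (Komornik's lemma).

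The one place where your proposal is a plan rather than a proof is the observability inequality itself: the multiplier identity with $\tfrac{x}{a}y_x$, the vanishing of the boundary terms at $x\to 0^+$ in the (WD) and (SD) regimes, and the absorption of the interior remainders via the Hardy--Poincar\'e inequality constitute essentially all of the technical content of \cite[Theorem 3.2]{FM}, and none of it is carried out here. You correctly locate the difficulty and the role of the threshold $K<2$, so the outline is sound, but as written the stability half of the theorem rests on an unverified estimate; to make the argument self-contained you would need to actually perform the weighted multiplier computation and the limit analysis at the degeneracy point, or else do what the authors do and simply cite \cite{FM}.
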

Under the conditions provided in the previous theorem, the exponential decay of solutions for \eqref{(P_2undelayed)} is uniform; in particular, $(\mathcal T(t))_{t\ge 0}$ satisfies \eqref{stima S(t)}.
\vspace{0.3cm}

In order to treat the nonlinear problem, we assume again Hypothesis \ref{ipo k} on $k$. On the other hand, Hypotheses \ref{ipo f} and   \ref{ipo Mbe+WP}
become:
\begin{Assumptions}\label{ipo f 2ndiv}
	Let $f:H^1_{\frac 1 a, 0}(0,1)\to L^2_{\frac{1}{a}}(0,1)$ be a continuous function such that
	\begin{enumerate}
	\item $f(0)=0$;
		\item for all $r>0$ there exists a constant $L(r)>0$ such that, for all $u,v\in H^1_{\frac 1 a, 0}(0,1)$ satisfying $\norm{u'}_{L^2(0,1)}\le r$ and $\norm{v'}_{L^2(0,1)}\le r$, one has
		\begin{equation*}
			\norm{f(u)-f(v)}_{L^2_{\frac{1}{a}}(0,1)}\le L(r)\norm{u'-v'}_{L^2(0,1)};
		\end{equation*}
		\item there exists a strictly increasing continuous function $h: \R_+ \rightarrow \R_+ $ such that 
		\begin{equation*}
			\langle f(u), u\rangle_{L^2_{\frac{1}{a}}(0,1)}\le h(\norm{u'}_{L^2(0,1)})\norm{u'}^2_{L^2(0,1)},
		\end{equation*}
		for all $u\in H^1_{\frac 1 a, 0}(0,1)$.
	\end{enumerate}
\end{Assumptions}

\begin{Assumptions}\label{ipo Mbe+WP 2ndiv}
	Suppose that:
	\begin{enumerate}
		\item  \eqref{ipo Mbe} is satisfied (in this case the constants are associated to the semigroup $(\mathcal T(t))_{t \ge 0}$);
			\item there exist $T, \rho >0$ and $C_\rho>0$, with $L(C_\rho)<\frac{\omega - \omega'}{M}$ such that if $Y^0\in\mathcal{M}_0$ and $\psi\in\mathcal{C}([-\tau,0];\mathcal{M}_0)$ satisfy
		\begin{equation*}
			\norm{Y^0}^2_{\mathcal{M}_0}+\int_0^\tau |k(s)|\cdot \norm{g(s-\tau)}^2_{\mathcal{M}_0}ds<\rho^2,
		\end{equation*}
		then the abstract problem associated to \eqref{(P_2nd)}  has a unique solution $Y\in\mathcal{C}([0, T);\mathcal{M}_0)$ satisfying $\norm{Y(t)}_{\mathcal{M}_0}\le C_\rho$ for all $t\in [0, T)$.
	\end{enumerate}
\end{Assumptions}

Also in this case, the analogue of Theorem \ref{Th Duhamel} still holds. Moreover, for the function $F$ defined in \eqref{Fgrande} one has:
\begin{Lemma}
	Assume Hypothesis \ref{ipo f 2ndiv} and $a$ (WD) or (SD). Then
	\begin{equation*}
		\Biggl |\int_0^1\frac{F(y)}{a}dx\Biggr |\le \frac{1}{2}h(\norm{y'}_{L^2(0,1)})\norm{y'}^2_{L^2(0,1)},
	\end{equation*}
	for all $y\in H^1_{\frac 1 a, 0}(0,1)$.
\end{Lemma}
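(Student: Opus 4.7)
The plan is to mimic the proof of Lemma \ref{Lemma 3.1} verbatim, replacing the second derivative norm by the first derivative norm, as dictated by the second-order functional setting in Hypothesis \ref{ipo f 2ndiv}.

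First, I fix $y \in H^1_{\frac 1 a, 0}(0,1)$ and use the fundamental observation that $\frac{d}{ds}F(sy) = F'(sy)\,y = f(sy)\,y$, together with $F(0)=0$ (which follows from $f(0)=0$ in Hypothesis \ref{ipo f 2ndiv}.1). This lets me write
\begin{equation*}
\int_0^1\frac{F(y(x))}{a(x)}\,dx = \int_0^1\frac{1}{a(x)}\int_0^1 f(sy(x))\,y(x)\,ds\,dx.
\end{equation*}
Then I swap the order of integration (legitimate because $f$ is continuous and $y \in H^1_{\frac 1 a, 0}(0,1)$ so the integrand is integrable) and recognize the inner integral as an inner product in $L^2_{\frac 1 a}(0,1)$:
\begin{equation*}
\int_0^1\frac{F(y(x))}{a(x)}\,dx = \int_0^1 \langle f(sy), sy\rangle_{L^2_{\frac 1 a}(0,1)} \frac{ds}{s}.
\end{equation*}

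Next I apply Hypothesis \ref{ipo f 2ndiv}.3 to the integrand, which gives $\langle f(sy), sy\rangle_{L^2_{\frac 1 a}(0,1)} \le h(\|(sy)'\|_{L^2(0,1)})\,s^2\,\|y'\|_{L^2(0,1)}^2$. Since $h$ is strictly increasing and $\|(sy)'\|_{L^2(0,1)} = s\|y'\|_{L^2(0,1)} \le \|y'\|_{L^2(0,1)}$ for $s \in [0,1]$, I bound $h(\|(sy)'\|_{L^2(0,1)}) \le h(\|y'\|_{L^2(0,1)})$ and obtain
\begin{equation*}
\Biggl|\int_0^1 \frac{F(y(x))}{a(x)}\,dx\Biggr| \le h(\|y'\|_{L^2(0,1)})\,\|y'\|_{L^2(0,1)}^2 \int_0^1 s\,ds = \frac{1}{2}h(\|y'\|_{L^2(0,1)})\,\|y'\|_{L^2(0,1)}^2,
\end{equation*}
which is exactly the claimed estimate.

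There is no real obstacle here: the argument is a direct transcription of Lemma \ref{Lemma 3.1} with $u''$ replaced by $u'$, and no new ingredients from the (WD)/(SD) degeneracy of $a$ are needed beyond ensuring that $sy \in H^1_{\frac 1 a, 0}(0,1)$ for every $s \in [0,1]$ (which is immediate since this is a linear space). The only bookkeeping point is to justify the Fubini interchange, but continuity of $f : H^1_{\frac 1 a, 0}(0,1) \to L^2_{\frac 1 a}(0,1)$ together with compactness of $[0,1]$ makes $s \mapsto f(sy)$ continuous with values in $L^2_{\frac 1 a}(0,1)$, so all integrals converge absolutely.
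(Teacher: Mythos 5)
Your proof is correct and is essentially identical to the paper's argument: the paper omits the proof of this lemma, stating only that it is "similar to the one of Lemma \ref{Lemma 3.1}", and your transcription (writing $F(y)=\int_0^1 f(sy)\,y\,ds$, recognizing $\int_0^1\langle f(sy),sy\rangle_{L^2_{\frac 1a}(0,1)}\frac{ds}{s}$, applying Hypothesis \ref{ipo f 2ndiv}.3 and the monotonicity of $h$) is exactly that adaptation with $u''$ replaced by $u'$. The extra remarks on Fubini and on $sy\in H^1_{\frac 1a,0}(0,1)$ are harmless additions; no discrepancy with the paper's route.
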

We omit the proof since it is similar to the one of Lemma \ref{Lemma 3.1}. As before, we can give the next definition.
\begin{Definition}
	Let $y$ be a mild solution of (\ref{(P_2nd)}) and define its energy  as
	\begin{equation*}
		\begin{aligned}
			E_y(t)&:=\frac{1}{2}\int_0^1 \Biggl (\frac{y^2_t(t,x)}{a(x)}+y^2_{x}(t,x) \Biggr )dx+\frac{\beta}{2}y^2(t,1)\\
			&-\int_0^1\frac{F(y(t,x))}{a(x)} dx+\frac{1}{2}\int_{t-\tau}^t |k(s+\tau)|\cdot \norm{B^*y_t(s)}^2_{H}ds,\quad\,\,\,\,\,\,\forall\;t\ge 0.
		\end{aligned}
	\end{equation*}
\end{Definition}
Moreover, the energy satisfies the following estimate:

\begin{Theorem}\label{th E 2ndiv}
	Assume Hypothesis \ref{ipo Mbe+WP 2ndiv}.2, $a$ (WD) or (SD)	and let $y$ be a mild solution to \eqref{(P_2nd)} defined on $[0, T).$ If $E_y(t)\ge\frac{1}{4}\norm{y_t(t)}^2_{L^2_{\frac 1 a}(0,1)}$ for any $t\in [0, T),$ then
	\begin{equation*}
		E_y(t)\le C(t)E_y(0)\,\,\,\,\,\,\,\,\,\,\forall\; t\in [0, T),
	\end{equation*}
	where $C(\cdot)$ is the function defined in \eqref{function C}.
\end{Theorem}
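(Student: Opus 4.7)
The proof will follow the same template as Theorem \ref{th E} and Theorem \ref{th E div}, with the modifications needed for the second order non divergence operator. The plan is to differentiate $E_y$ formally with respect to $t$, integrate by parts once in space, exploit the boundary conditions of \eqref{(P_2nd)} to absorb all boundary terms, use the PDE to replace $y_{tt}$, and finally apply Young/Cauchy and Gronwall.

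First I would compute
\[
\frac{dE_y(t)}{dt} = \int_0^1 \frac{y_t y_{tt}}{a} dx + \int_0^1 y_x y_{tx}\, dx + \beta y(t,1) y_t(t,1) - \int_0^1 \frac{f(y) y_t}{a} dx + \frac{1}{2}|k(t+\tau)|\,\|B^*y_t(t)\|^2_H - \frac{1}{2}|k(t)|\,\|B^*y_t(t-\tau)\|^2_H.
\]
For the second integral I would integrate by parts, using the Gauss--Green formula from \cite[Lemma 2.1]{FM} together with $y_t(t,0)=0$ (which follows from $y(t,0)=0$ for all $t$), obtaining $\int_0^1 y_x y_{tx}\, dx = y_x(t,1) y_t(t,1) - \int_0^1 y_{xx} y_t\, dx$. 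Substituting $y_{tt} = a y_{xx} - k(t) BB^* y_t(t-\tau) + f(y)$ from \eqref{(P_2nd)} into the first integral, the terms $\pm \int_0^1 y_{xx} y_t\, dx$ cancel, and the $f(y)$ contributions cancel as well, leaving only the boundary contribution $y_x(t,1) y_t(t,1) + \beta y(t,1) y_t(t,1)$ plus the delay feedback terms.

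Next, the boundary condition $\beta y(t,1) + y_x(t,1) + y_t(t,1)=0$ gives $y_x(t,1) + \beta y(t,1) = -y_t(t,1)$, so the boundary contribution reduces to $-y_t^2(t,1)$. Hence
\[
\frac{dE_y(t)}{dt} = -y_t^2(t,1) - k(t)\langle B^* y_t(t), B^* y_t(t-\tau)\rangle_H + \frac{1}{2}|k(t+\tau)|\,\|B^* y_t(t)\|^2_H - \frac{1}{2}|k(t)|\,\|B^* y_t(t-\tau)\|^2_H.
\]
The Cauchy inequality applied to the cross term bounds $-k(t)\langle B^* y_t(t), B^*y_t(t-\tau)\rangle_H$ by $\tfrac12 |k(t)|(\|B^*y_t(t)\|_H^2 + \|B^*y_t(t-\tau)\|_H^2)$, which kills the last delay term and leaves
\[
\frac{dE_y(t)}{dt} \le \frac{1}{2}\bigl(|k(t)| + |k(t+\tau)|\bigr)\,\|B^* y_t(t)\|_H^2.
\]

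Finally, using $\|B^*\|_{\mathcal{L}(L^2_{\frac{1}{a}}(0,1),H)} = b$ (the analogue of \eqref{b} in this setting), I bound $\|B^* y_t(t)\|_H^2 \le b^2 \|y_t(t)\|^2_{L^2_{\frac{1}{a}}(0,1)}$, and then invoke the hypothesis $E_y(t) \ge \tfrac14 \|y_t(t)\|^2_{L^2_{\frac{1}{a}}(0,1)}$ to get
\[
\frac{dE_y(t)}{dt} \le 2 b^2 \bigl(|k(t)| + |k(t+\tau)|\bigr) E_y(t), \quad t \in [0,T).
\]
Gronwall's lemma then yields $E_y(t) \le E_y(0)\, e^{2\int_0^t b^2(|k(s)| + |k(s+\tau)|)\,ds} = C(t) E_y(0)$, as desired. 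No real obstacle is expected: the only delicate point is justifying the formal differentiation and integration by parts at the level of mild solutions, but this is handled as in Theorem \ref{th E} by a standard density/regularization argument passing through classical solutions whose existence is guaranteed by Theorem \ref{Theorem regol3}.
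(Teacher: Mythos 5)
Your proposal is correct and follows essentially the same route as the paper's own proof: formal differentiation of $E_y$, one integration by parts using $y_t(t,0)=0$, cancellation of the $y_{xx}$ and $f$ contributions via the PDE, reduction of the boundary terms to $-y_t^2(t,1)$ through the dissipative boundary condition, the Cauchy inequality to absorb the delayed term, the bound $\|B^*y_t(t)\|_H^2\le b^2\|y_t(t)\|^2_{L^2_{\frac1a}(0,1)}$ together with $E_y(t)\ge\frac14\|y_t(t)\|^2_{L^2_{\frac1a}(0,1)}$, and Gronwall. No discrepancies worth noting.
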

	\begin{proof}
	Let $y$ be a solution of \eqref{(P_2nd)}. Differentiating formally $E_y$ with respect to $t$, using the Gauss Green formula and the boundary conditions, we have
	\begin{equation*}
		\begin{aligned}
			\frac{dE_y(t)}{dt}&= \int_0^1\Big (
			\frac{y_t(t,x)y_{tt}(t,x)}{a(x)} +y_{x}(t,x)y_{xt}(t,x)\Big ) dx+\beta y(t,1)y_t(t,1)\\
			&-\int_0^1\frac{f(y(t,x))y_t(t,x)}{a(x)} dx+\frac{1}{2}|k(t+\tau)|\cdot \norm{B^*y_t(s)}^2_{H}\\&-\frac{1}{2}|k(t)|\cdot \norm{B^*y_t(t-\tau)}^2_{H}\\
			&= \int_0^1\Big (
			\frac{y_t(t,x)y_{tt}(t,x)}{a(x)} -y_{xx}(t,x)y_{t}(t,x)\Big ) dx+y_{x}(t,1)y_t(t,1)\\
			&+y_t(t,1)[-y_{x}(t,1)-y_t(t,1)]\\
			&-\int_0^1\frac{f(y(t,x))y_t(t,x)}{a(x)} dx+\frac{1}{2}|k(t+\tau)|\cdot \norm{B^*y_t(s)}^2_{H}\\&-\frac{1}{2}|k(t)|\cdot \norm{B^*y_t(t-\tau)}^2_{H}\\
			&= \int_0^1\Big (
			\frac{y_t(t,x)y_{tt}(t,x)}{a(x)} -y_{xx}(t,x)y_{t}(t,x)\Big ) dx -y_t^2(t,1)\\
			&-\int_0^1\frac{f(y(t,x))y_t(t,x)}{a(x)} dx+\frac{1}{2}|k(t+\tau)|\cdot \norm{B^*y_t(s)}^2_{H}\\&-\frac{1}{2}|k(t)|\cdot \norm{B^*y_t(t-\tau)}^2_{H}.
		\end{aligned}
	\end{equation*}
	Again one has
	\begin{equation*}
		\frac{dE_y(t)}{dt}\le 2b^2(|k(t+\tau)|+|k(t)|)E_y(t)
	\end{equation*}
	and the thesis follows using the Gronwall Lemma.
\end{proof}

It remains to prove the well posedness assumption, i.e. Hypothesis \ref{ipo Mbe+WP 2ndiv}.2, for \eqref{(P_2nd)}. Again Proposition \ref{th esist loc} holds in this context and the analogue of Proposition \ref{Prop E} is the following:

\begin{Proposition}\label{Prop E 2ndiv}
	Assume Hypothesis \ref{ipo f 2ndiv} and $a$ (WD) or (SD). Take $T>0$ and let $Y$ be a non trivial solution of the abstract problem with initial data $Y^0\in\mathcal{M}_0$ and $\psi\in \mathcal{C}([-\tau,0],\mathcal{M}_0),$ defined on $[0,\delta),$ with $\delta\le T$. The following statements hold: 
		\begin{enumerate}
		\item if $h\bigl (\norm{(y^0)'}_{L^2(0,1)}\bigr )<\frac{1}{2}$, then $E_y(0)>0$;
		\item if $h\bigl (\norm{(y^0)'}_{L^2(0,1)}\bigr )<\frac{1}{2}$ and $h( 2\sqrt{C(T)E_y(0)})<\frac{1}{2}$, then
		\begin{equation*}
			\begin{array}{l}
				\displaystyle{E_y(t)>\frac{1}{4}\norm{y_t(t)}^2_{L^2_{\frac{1}{a}}(0,1)}+\frac{1}{4}\norm{y_{x}(t)}^2_{L^2(0,1)}+\frac{\beta}{4}y^2(t,1)}
				\\\displaystyle{
					\hspace{5,5 cm}+\frac{1}{4}\int_{t-\tau}^t |k(s+\tau)|\cdot \norm{B^*y_t(s)}^2_{H}ds}
			\end{array}
		\end{equation*}
		for all $t\in [0,\delta)$, being $C(\cdot)$ the function defined in \eqref{function C}. In particular,
		\begin{equation*}
			E_y(t)>\frac{1}{4}\norm{Y(t)}^2_{\mathcal{M}_0},\,\,\,\,\,\,\,\,\,\,\forall\; t\in [0,\delta).
		\end{equation*}
	\end{enumerate}
\end{Proposition}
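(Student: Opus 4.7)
The proof will follow exactly the scheme of Proposition \ref{Prop E}, with the only difference being the replacement of the second derivative $y''$ (which was the natural quantity in the fourth-order non-divergence setting) by the first derivative $y'$ (which is the natural quantity for the second-order non-divergence setting), and the use of Theorem \ref{th E 2ndiv} in place of Theorem \ref{th E}.

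For \textbf{Claim 1}, the plan is to combine the estimate on the nonlinear term stated in the lemma preceding Definition \ref{Prop E 2ndiv} (the second-order analogue of Lemma \ref{Lemma 3.1}) with the smallness hypothesis on $h(\|(y^0)'\|_{L^2(0,1)})$. More precisely, the bound
\[
\left|\int_0^1 \frac{F(y^0(x))}{a(x)}\, dx\right| \le \frac{1}{2} h\bigl(\|(y^0)'\|_{L^2(0,1)}\bigr)\|(y^0)'\|_{L^2(0,1)}^2 < \frac{1}{4}\|(y^0)'\|_{L^2(0,1)}^2
\]
allows us to absorb a quarter of the elastic term in the definition of $E_y(0)$, which then splits as a strictly positive sum of quadratic contributions in $y^1$, $(y^0)'$, the boundary term, and the delay integral, yielding $E_y(0)>0$.

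For \textbf{Claim 2}, I will use a standard continuity/bootstrap argument. Set
\[
r:=\sup\{s\in [0,\delta)\colon \text{the desired lower bound for } E_y \text{ holds on } [0,s)\}.
\]
By Claim 1 and continuity of the solution $Y\in \mathcal{C}([0,\delta);\mathcal{M}_0)$, we have $r>0$. Supposing by contradiction that $r<\delta$, continuity forces equality in the bound at $t=r$. In particular $\tfrac{1}{4}\|y_t(r)\|_{L^2_{1/a}(0,1)}^2 \le E_y(r)$, so Theorem \ref{th E 2ndiv} applies and gives $E_y(r)\le C(T)E_y(0)$. Combining this with the inequality $\tfrac{1}{4}\|y_x(r)\|_{L^2(0,1)}^2 \le E_y(r)$ and the monotonicity of $h$, we deduce
\[
h(\|y_x(r)\|_{L^2(0,1)}) \le h\bigl(2\sqrt{E_y(r)}\bigr) \le h\bigl(2\sqrt{C(T)E_y(0)}\bigr) < \frac{1}{2}.
\]
Plugging this back into the definition of $E_y(r)$ and invoking the nonlinear estimate from the lemma produces the strict inequality
\[
E_y(r) > \frac{1}{4}\|y_t(r)\|_{L^2_{1/a}(0,1)}^2 + \frac{1}{4}\|y_x(r)\|_{L^2(0,1)}^2 + \frac{\beta}{4}y^2(r,1) + \frac{1}{4}\int_{r-\tau}^r |k(s+\tau)|\cdot\|B^*y_t(s)\|_H^2\,ds,
\]
which contradicts the equality forced by the maximality of $r$. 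Hence $r=\delta$, and the concluding inequality on $\|Y(t)\|_{\mathcal{M}_0}^2$ follows from the definition of the norm on $\mathcal{M}_0$.

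I do not anticipate any serious obstacle: the adaptation is essentially bookkeeping, with the Hilbert space $\mathcal{H}_0$ replaced by $\mathcal{M}_0$, the norm $\|y''\|_{L^2(0,1)}$ replaced by $\|y'\|_{L^2(0,1)}$, and the Gauss–Green formula \eqref{GF1} replaced by the one used in the proof of Theorem \ref{th E 2ndiv}. The slightly delicate point is to ensure that the strict inequality at $t=r$ is genuinely strict (so that continuity yields the contradiction), which is guaranteed by the strict inequality $h(2\sqrt{C(T)E_y(0)})<1/2$ combined with $E_y(r)>0$ coming from the fact that $Y$ is a non-trivial solution.
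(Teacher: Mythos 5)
Your proposal is correct and follows essentially the same route as the paper, which proves this proposition only implicitly by declaring it the analogue of Proposition \ref{Prop E}: Claim 1 via the second-order version of Lemma \ref{Lemma 3.1} together with $h\bigl(\norm{(y^0)'}_{L^2(0,1)}\bigr)<\frac{1}{2}$, and Claim 2 via the same supremum-and-contradiction argument using Theorem \ref{th E 2ndiv} and the monotonicity of $h$. The bookkeeping replacements you list ($\mathcal{H}_0\to\mathcal{M}_0$, $\norm{y''}_{L^2(0,1)}\to\norm{y'}_{L^2(0,1)}$) are exactly what the paper intends.
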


As a consequence, one has that Hypothesis \ref{ipo Mbe+WP 2ndiv}.2 for system \eqref{(P_2nd)} still holds:
\begin{Theorem}
	Assume Hypotheses \ref{ipo k}, \ref{ipo f 2ndiv} and Hypothesis \ref{ipo Mbe+WP 2ndiv}.1. Consider the abstract problem associated to \eqref{(P_2nd)}, with initial data $Y^0\in\mathcal{M}_0$ and $\psi\in \mathcal{C}([-\tau,0]; \mathcal{M}_0)$. Then the problem satisfies Hypothesis \ref{ipo Mbe+WP 2ndiv}.2 and, if the initial data are sufficiently small, the corresponding solutions exist and decay exponetially according to the following law
	\begin{equation*}		\norm{Y(t)}_{\mathcal{M}_0}\le Me^\alpha \Biggl (	\norm{Y^0}_{\mathcal{M}_0}+\int_0^\tau e^{\omega s}|k(s)|\cdot \norm{\psi(s-\tau)}_{\mathcal{M}_0}ds \Biggr )e^{-(\omega -\omega'-ML(C_\rho))t},
	\end{equation*}
	for any $t\in (0, +\infty)$.
\end{Theorem}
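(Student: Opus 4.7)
The plan is to transcribe the proof of Theorem \ref{global} to the present $\mathcal{M}_0$-setting, since all the ingredients have already been reproved in this framework: local existence (the direct analogue of Proposition \ref{th esist loc}), the energy estimate of Theorem \ref{th E 2ndiv}, the dichotomy of Proposition \ref{Prop E 2ndiv}, and the abstract Duhamel-based decay bound (the analogue of Theorem \ref{Th Duhamel}) for the semigroup $(\mathcal T(t))_{t\ge 0}$, which satisfies \eqref{stima S(t)} thanks to the exponential stability of the undelayed problem \eqref{(P_2undelayed)} given by Theorem \ref{Theorem regol3}.

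First I would fix $T>0$ large enough that
\[
C_T:=2M^2e^{2\alpha}(1+\Lambda e^{\omega\tau}b^2)(1+\Lambda e^{2\omega\tau}b^2)e^{-(\omega-\omega')T}\le 1,
\]
and choose $\rho>0$ so small that $\rho\le \frac{1}{2\sqrt{C(T)}}h^{-1}(1/2)$ and, shrinking $\rho$ further if needed, $L(C_\rho)<\frac{\omega-\omega'}{2M}$ with $C_\rho:=2\sqrt{C(T)}\rho$. For initial data satisfying $\norm{Y^0}^2_{\mathcal{M}_0}+\int_{-\tau}^{0}|k(s+\tau)|\cdot\norm{g(s)}^2_{H}\,ds\le\rho^2$, local existence produces a solution on some $[0,\delta)$ with $\delta\le T$. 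The smallness of $\norm{(y^0)'}_{L^2(0,1)}$ and Proposition \ref{Prop E 2ndiv}.1 yield $E_y(0)>0$, while the $F$-estimate (the $\mathcal{M}_0$-analogue of Lemma \ref{Lemma 3.1}) gives $E_y(0)\le \rho^2$; hence $h\bigl(2\sqrt{C(T)E_y(0)}\bigr)<1/2$, Proposition \ref{Prop E 2ndiv}.2 applies, and, combined with Theorem \ref{th E 2ndiv}, furnishes $E_y(t)\le C(T)E_y(0)$ together with $\norm{Y(t)}_{\mathcal{M}_0}\le C_\rho$ on $[0,T]$. This is exactly Hypothesis \ref{ipo Mbe+WP 2ndiv}.2 on $[0,T]$, so the Duhamel-based analogue of Theorem \ref{Th Duhamel} delivers the claimed exponential decay on that interval.

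To iterate, H\"older's inequality and Hypothesis \ref{ipo k} give $\int_0^\tau e^{\omega s}|k(s)|\cdot\norm{\psi(s-\tau)}_{\mathcal{M}_0}\,ds\le e^{\omega\tau}\sqrt{\Lambda}\,\rho\, b$, and evaluating the decay estimate at $t=T$ together with $C_T\le 1$ produces
\[
\norm{Y(T)}^2_{\mathcal{M}_0}+\int_{T-\tau}^{T}|k(s+\tau)|\cdot \norm{B^*y_t(s)}^2_H\,ds\le C_T\rho^2\le \rho^2,
\]
so the smallness condition is preserved at $t=T$. Restarting the construction on $[T,2T]$, $[2T,3T]$, \ldots\ yields a unique global solution satisfying the announced exponential bound on $(0,+\infty)$. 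The only non-routine point is verifying that, in the $\mathcal{M}_0$-norm, substituting $\|\cdot''\|_{L^2(0,1)}$ by $\|\cdot'\|_{L^2(0,1)}$ throughout does not disrupt the algebraic coefficients in the bound $E_y(0)\le\rho^2$ and in the lower bound of Proposition \ref{Prop E 2ndiv}.2; once this bookkeeping is checked, the iteration proceeds exactly as in the proof of Theorem \ref{global}.
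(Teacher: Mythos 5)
Your proposal is correct and follows exactly the route the paper intends: the paper states this theorem without proof as the direct analogue of Theorem \ref{global}, and your argument is a faithful transcription of that proof to the $\mathcal{M}_0$-setting, using the same constants $C_T$, $\rho$, $C_\rho=2\sqrt{C(T)}\rho$, the same chain Proposition \ref{Prop E 2ndiv} $\Rightarrow$ Theorem \ref{th E 2ndiv} $\Rightarrow$ Duhamel decay, and the same iteration over $[nT,(n+1)T]$. Your closing remark about checking that replacing $\|\cdot''\|_{L^2(0,1)}$ by $\|\cdot'\|_{L^2(0,1)}$ preserves the bookkeeping is the right caveat, and it indeed causes no difficulty since the energy and the $\mathcal{M}_0$-norm are built from the same quantities.
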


\subsubsection{The second order problem in divergence form}
Now, we consider the problem governed by a second order degenerate operator in divergence form. In particular,  we consider the following problem:
\begin{equation}\label{(P_2div)}
	\begin{cases}
		y_{tt}(t,x)-(ay_{x})_{x}(t,x)+k(t)BB^*y_t(t-\tau,x)=f(y(t,x)), &(t,x)\in Q,\\
		\begin{cases}
			y(t,0)=0, &\text{ if } a \text{ is (WD)},\\
			\lim_{x\to 0}(ay_{x})(t,x)=0, &\text{ if } a \text{ is (SD)},\\ 
		\end{cases} &t>0,\\
		\beta y(t,1)+y_x(t,1)+y_t(t,1)=0, &t >0,\\
		y(0,x)=y^0(x),\,\,y_t(0,x)=y^1(x),&x\in(0,1),
	\end{cases}
\end{equation}
where $\beta >0$.
Following \cite{ACL}, we consider the following space
\[
\begin{aligned}
	Q^1_a(0,1):=\{u\in L^2(0,1):& \; u \text{ is locally absolutely continuous in (0,1]},\\
	& \sqrt{a}u'\in L^2(0,1)\},
\end{aligned}
\]
with inner product 
\begin{equation*}
	\left\langle u,v\right\rangle^2_{1,a}:= \int_0^1uv\,dx+\int_0^1au'v'dx
\end{equation*} and norm
\begin{equation*}
	\|u \|_{1,a}^2:= \|u\|^2_{L^2(0,1)}+ \|\sqrt{a}u'\|^2_{L^2(0,1)},
\end{equation*}
for all $u, v\in V^1_a(0,1)$.
Next, denote by  $W^1_a(0,1)$ the space $Q^1_a(0,1)$ itself if $a$ is (SD) and, if $a$ is (WD), the closed subspace of $Q^1_a(0,1)$ consisting of all the functions $u \in Q^1_a(0,1)$ such that $u(0)=0$.
Moreover, define
\[
\begin{aligned}
	Q^2_a(0,1):=\{u\in Q^1_a(0,1):au'\in H^1(0,1)\},
\end{aligned}
\]
\begin{equation*}
	W^2_{a}(0,1):=W^1_{a}(0,1)\cap Q^2_{a}(0,1)
\end{equation*}
and
\begin{equation*}
	\mathcal{N}_0:=W^1_{a}(0,1)\times L^2(0,1).
\end{equation*}
On $Q^2_a(0,1)$ and  $\mathcal{N}_0$ consider inner products and norms given by
\[
\left\langle u,v\right\rangle^2_{2,a}:= \int_0^1uv\,dx+\int_0^1au'v'dx + \int_0^1(au')'(av')'dx,
\]
\[
	\|u \|_{2,a}^2:= \|u\|^2_{L^2(0,1)}+ \|\sqrt{a}u'\|^2_{L^2(0,1)} + \|(au')'\|^2_{L^2(0,1)},
\]
for every $u,v \in Q^2_a(0,1)$ and
\begin{equation*}
	\langle (u,v),(\tilde{u},\tilde{v})\rangle_{\mathcal{N}_0}:=\int_{0}^{1}au'\tilde{u}'dx+\int_{0}^{1}v\tilde{v}dx+\beta a(1) u(1)\tilde{u}(1),
\end{equation*}
\begin{equation*}
	\|(u,v)\|^2_{\mathcal{N}_0}:=\int_{0}^{1}(\sqrt{a}u')^2dx+\int_{0}^{1}v^2dx+\beta a(1) u^2(1)
\end{equation*}
for every $(u,v), \;(\tilde{u},\tilde{v})\in\mathcal{N}_0$.  Finally, setting 
\begin{equation*}		
D(\mathcal{M}_{d}):=\{(u,v)\in W^2_{a}(0,1)\times W^1_{a}(0,1): \beta u(1)+u'(1)+v(1)=0 \},
\end{equation*}
we can define the operator matrix
$\mathcal{M}_{d}:D(\mathcal{M}_{d})\subset\mathcal{N}_0\to \mathcal{N}_0$ given by
\begin{equation*}
	\mathcal{M}_{d}:=\begin{pmatrix}
		0 & Id \\
		M_{d} & 0
	\end{pmatrix},
\end{equation*}
where $M_d u:= (au')'$ for all $u \in W^2_{a}(0,1)$.
As proved in \cite{ACL}, one has that
$(\mathcal M_{d}, D(\mathcal M_{d}))$  generates a contraction semigroup  $(V(t))_{t \ge 0}$ in $\mathcal N_0$ and the following well posedness  and stability result holds for the undelayed problem
 \begin{equation}\label{(P_2d)}
	\begin{cases}
		y_{tt}(t,x)-(ay_{x})_{x}(t,x)=0, &(t,x)\in Q,\\
		y(t,0)=0,&t>0,\\
		\begin{cases}
			y_x(t,0)=0, &\text{ if } a \text{ is (WD)},\\
			\lim_{x\to 0}(ay_{x})(t,x)=0, &\text{ if } a \text{ is (SD)},\\ 
		\end{cases} &t>0,\\
		\beta y(t,1)+y_x(t,1)+y_t(t,1)=0, &t >0,\\
		y(0,x)=y^0(x),\,\,y_t(0,x)=y^1(x),&x\in(0,1).
	\end{cases}
\end{equation} 

\begin{Theorem}(see \cite[Corollary 4.2 and Theorem 4.5]{ACL})	Assume $a$ (WD) or (SD).
	If $(y^0,y^1)\in \mathcal N_0$, then there exists a unique mild solution
	\begin{equation*}
		y\in \mathcal{C}^1([0,+\infty);\mathcal N_0)\cap \mathcal{C}([0,+\infty);	D(\mathcal{M}_{d}))
	\end{equation*}
	of \eqref{(P_2d)} which depends continuously on the initial data $(y^0,y^1)\in \mathcal{N}_0$. If $(y^0,y^1)\in D(\mathcal{M}_{d})$, then the solution $y$ is classical, in the sense that
	\begin{equation*}
		y\in \mathcal{C}^2([0,+\infty);L^2(0,1))\cap \mathcal{C}^1([0,+\infty);W^1_{a}(0,1))\cap \mathcal{C}([0,+\infty);W^2_{a}(0,1)).
	\end{equation*}
	Moreover, if $\beta >0$ and $y$ is a mild solution of \eqref{(P_2d)}, then there exists a suitable constant $C>0$ such that 	
	the energy associated to \eqref{(P_2d)} given by
\begin{equation*}
	\mathcal{E}_y(t):=\frac{1}{2}\int_0^1 \Biggl (y^2_t(t,x)+a(x)y^2_{x}(t,x) \Biggr )dx+\frac{\beta}{2}a(1)y^2(t,1),\quad\,\,\,\,\,\,\forall\;t\ge 0,
\end{equation*}
decays exponentially, i.e. $\exists \; T_0>0$ such that
	\begin{equation*}
		\mathcal{E}_y(t)\le \mathcal{E}_y(0)e^{1-\frac{t}{T_0}},
	\end{equation*}
	for all $t\ge T_0$.
\end{Theorem}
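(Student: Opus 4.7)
The first two assertions of the theorem are purely semigroup-theoretic. As recalled just before the statement (following \cite{ACL}), $(\mathcal{M}_d,D(\mathcal{M}_d))$ generates a $C_0$-semigroup of contractions $(V(t))_{t\ge 0}$ on $\mathcal{N}_0$, so the Hille--Yosida theorem yields, for every $Y^0=(y^0,y^1)\in \mathcal N_0$, the unique mild solution $Y(t):=V(t)Y^0$, continuously depending on $Y^0$. Reading off the two components of $Y$ gives $y\in \mathcal C^1([0,+\infty);\mathcal N_0)\cap \mathcal C([0,+\infty);D(\mathcal M_d))$. When moreover $Y^0\in D(\mathcal{M}_d)$, the abstract theory upgrades $Y$ to a classical solution, i.e.\ $Y\in \mathcal C^1([0,+\infty);\mathcal N_0)\cap \mathcal C([0,+\infty);D(\mathcal M_d))$, which translates into the claimed componentwise regularity of $y$, and the PDE in \eqref{(P_2d)} holds pointwise in $t$.

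The energy estimate requires two further steps. For the dissipation identity I would differentiate $\mathcal E_y$ along a classical solution and use the degenerate Gauss--Green formula underlying the construction of $(V(t))_{t\ge 0}$ together with the boundary condition $\beta y(t,1)+y_x(t,1)+y_t(t,1)=0$ at $x=1$ and either $y(t,0)=0$ or $\lim_{x\to 0}(ay_x)(t,x)=0$ at $x=0$. After the usual cancellations this produces
\[
\frac{d\mathcal E_y(t)}{dt}=-y_t^2(t,1)\le 0.
\]
A density argument then extends the monotonicity to mild solutions.

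For the quantitative exponential decay, assuming $\beta>0$, the boundary trace at $x=1$ is genuinely dissipative and a multiplier approach applies. I would test the equation with a multiplier of the form $p(x)y_x$, with $p$ tailored to the degeneracy of $a$ at $x=0$ (a linear weight in the (WD) case, a $p$ adapted to $a$ in the (SD) case, in the spirit of \cite{ACL}), integrate over $(0,T)\times(0,1)$, and perform degenerate integrations by parts to derive an observability-type inequality
\[
T\,\mathcal E_y(T)\le C\int_0^T y_t^2(t,1)\,dt+\text{(lower-order terms absorbable by }\mathcal E_y\text{)}.
\]
Combined with the dissipation identity this gives $\mathcal E_y(T_0)\le \theta\,\mathcal E_y(0)$ for some $\theta\in(0,1)$ and $T_0$ large enough, and iterating on time intervals of length $T_0$ produces $\mathcal E_y(t)\le \mathcal E_y(0)e^{1-t/T_0}$ for $t\ge T_0$.

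The main obstacle is the control of the boundary contribution at the degenerate endpoint $x=0$ produced by the multiplier: one needs $[p(x)\,a(x)\,y_x^2]_{x=0}=0$ and an absorption of the terms $\int_0^1 (pa)'\,y_x^2\,dx$ by the energy, both of which rely precisely on the Hardy--Poincar\'e-type inequalities built on the hypothesis $K<2$ (WD or SD). This is the step that selects the admissible range of $K$ and prevents a straightforward extension to $K\ge 2$.
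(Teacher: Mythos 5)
This theorem is not proved in the paper at all: it is imported verbatim from \cite{ACL} (Corollary 4.2 and Theorem 4.5 there), so there is no internal argument to compare against. Your outline does follow the strategy of the cited source — Lumer--Phillips generation of the contraction semigroup $(V(t))_{t\ge 0}$ for well-posedness, the dissipation identity from the degenerate Gauss--Green formula and the boundary conditions, and a multiplier/observability argument for the quantitative decay — so the route is the right one. Two small corrections on the parts you did carry out. First, with the energy normalized as in the statement (note the factor $a(1)$ in the boundary term $\frac{\beta}{2}a(1)y^2(t,1)$), the dissipation identity is $\frac{d\mathcal E_y(t)}{dt}=-a(1)\,y_t^2(t,1)$, not $-y_t^2(t,1)$; the sign is unaffected but the constant matters when you later absorb the boundary trace in the observability inequality. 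Second, ``reading off the two components'' of the mild solution $Y(t)=V(t)Y^0$ with $Y^0\in\mathcal N_0$ only gives $Y\in\mathcal C([0,+\infty);\mathcal N_0)$, i.e.\ $y\in\mathcal C^1([0,+\infty);L^2(0,1))\cap\mathcal C([0,+\infty);W^1_a(0,1))$; it does not give $y\in\mathcal C([0,+\infty);D(\mathcal M_d))$ for mere mild data (that regularity requires $Y^0\in D(\mathcal M_d)$), so your justification of the first displayed regularity class does not actually deliver what is written there.

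The substantive gap is the last step. Everything before it is routine semigroup theory, but the exponential decay is exactly the hard content of \cite{ACL}, and your proposal only \emph{describes} the multiplier computation rather than performing it: the choice of the weight $p$ adapted to (WD) versus (SD), the verification that $[p\,a\,y_x^2]_{x=0}=0$ and that $\int_0^1(pa)'y_x^2\,dx$ is controlled by $\mathcal E_y$ with a constant strictly less than what the boundary term can absorb, and the treatment of the lower-order terms in the observability inequality all hinge on the Hardy--Poincar\'e machinery tied to $K<2$. You correctly identify this as ``the main obstacle,'' but identifying it is not the same as overcoming it; as written, the decay estimate $T\,\mathcal E_y(T)\le C\int_0^T y_t^2(t,1)\,dt+\dots$ is asserted, not derived. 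Since the paper deliberately outsources this to \cite{ACL}, the honest conclusion is that your plan is consistent with the known proof but incomplete as a standalone argument: the one step that actually uses the degeneracy hypotheses is left unexecuted.
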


Thus, if $\beta >0$, the solutions of \eqref{(P_2d)} decay exponentially uniformly and the semigroup $(V(t))_{t\ge 0}$ is exponentially stable, i.e. it satisfies an inequality similar to \eqref{stima S(t)}.

In order to study the delayed problem \eqref{(P_2div)}  we consider the following assumptions:
\begin{Assumptions}\label{ipo f 2div}
	Let $f:W^1_{a}(0,1)\to L^2(0,1)$ be a continuous function such that
	\begin{enumerate}
	\item  $f(0)=0$ ;
		\item for all $r>0$ there exists a constant $L(r)>0$ such that, for all $u,v\in W^1_{a}(0,1)$ satisfying $\norm{\sqrt{a}u'}_{L^2(0,1)}\le r$ and $\norm{\sqrt{a}v'}_{L^2(0,1)}\le r$, one has
		\begin{equation*}
			\norm{f(u)-f(v)}_{L^2(0,1)}\le L(r)\norm{\sqrt{a}u'-\sqrt{a}v'}_{L^2(0,1)}; 
		\end{equation*}
		\item there exists a strictly increasing continuous function $h: \R_+ \rightarrow \R_+ $ such that 
		\begin{equation*}
			\langle f(u), u\rangle_{L^2(0,1)}\le h(\norm{\sqrt{a}u'}_{L^2(0,1)})\norm{\sqrt{a}u'}^2_{L^2(0,1)},
		\end{equation*}
		for all $u\in W^1_{a}(0,1)$.
	\end{enumerate}
\end{Assumptions}

\begin{Assumptions}\label{ipo Mbe+WP 2div}
	Suppose that:
	\begin{enumerate}
		\item \eqref{ipo Mbe} is satisfied (in this case the constants are associated to the semigroup $(V(t))_{t\ge 0}$);
		\item there exist $T, \rho >0$, $C_\rho>0$, with $L(C_\rho)<\frac{\omega - \omega'}{M}$ such that if $Y^0\in\mathcal{N}_0$ and $\psi\in\mathcal{C}([-\tau,0];\mathcal{N}_0)$ satisfy
		\[			\norm{Y^0}^2_{\mathcal{N}_0}+\int_0^\tau |k(s)|\cdot \norm{g(s-\tau)}^2_{\mathcal{N}_0}ds<\rho^2,
		\]
		then the abstract system associated to \eqref{(P_2div)} has a unique solution $Y\in\mathcal{C}([0, T);\mathcal{N}_0)$ satisfying $\norm{Y(t)}_{\mathcal{N}_0}\le C_\rho$ for all $t\in [0, T)$.
	\end{enumerate}
\end{Assumptions}

Theorem \ref{Th Duhamel} still holds and the function $F$ defined in \eqref{Fgrande} satisfies the following estimate.

\begin{Lemma}
	Assume Hypothesis \ref{ipo f 2div} and $a$ (WD) or (SD). Then
	\begin{equation*}
		\Biggl |\int_0^1F(y)dx\Biggr |\le \frac{1}{2}h(\norm{\sqrt{a}y'}_{L^2(0,1)})\norm{\sqrt{a}y'}^2_{L^2(0,1)},
	\end{equation*}
	for all $y\in W^1_a(0,1)$.
\end{Lemma}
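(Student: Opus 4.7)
The plan is to mirror, almost verbatim, the argument used in Lemma \ref{Lemma 3.1}, adapting only the functional setting: the weight $\frac{1}{a}$ disappears from the outer integral, and the condition on $\langle f(u),u\rangle$ is now expressed in the unweighted $L^2(0,1)$ inner product and controlled by $\|\sqrt{a}u'\|_{L^2(0,1)}$ instead of $\|u''\|_{L^2(0,1)}$.

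First I would fix $y\in W^1_a(0,1)$ and use the key one-variable identity
\[
\frac{d}{ds}F(sy) = f(sy)\,y,
\]
which, after integrating in $s\in[0,1]$ and exchanging the order of integration via Fubini, gives
\[
\int_0^1 F(y(x))\,dx \;=\; \int_0^1 \int_0^1 f(sy(x))\,y(x)\,ds\,dx \;=\; \int_0^1 \langle f(sy), sy\rangle_{L^2(0,1)}\,\frac{ds}{s}.
\]

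Next, I would apply Hypothesis \ref{ipo f 2div}.3 with $u = sy$, noting that $\sqrt{a}(sy)' = s\sqrt{a}y'$, to bound
\[
|\langle f(sy), sy\rangle_{L^2(0,1)}| \;\le\; h\bigl(s\,\|\sqrt{a}y'\|_{L^2(0,1)}\bigr)\,s^2\,\|\sqrt{a}y'\|^2_{L^2(0,1)}.
\]
Then, since $h$ is strictly increasing and $s\in[0,1]$, one has $h(s\|\sqrt{a}y'\|_{L^2(0,1)})\le h(\|\sqrt{a}y'\|_{L^2(0,1)})$. Finally, using $\int_0^1 s\,ds = \frac{1}{2}$ yields the desired estimate.

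There is no real obstacle here, since the space $W^1_a(0,1)$ is explicitly designed so that $\sqrt{a}y'\in L^2(0,1)$ whenever $y\in W^1_a(0,1)$, and Hypothesis \ref{ipo f 2div} is tailored to this setting. The only points requiring a moment of care are the applicability of Fubini (guaranteed by continuity of $f$ between the stated spaces together with the integrability of $s\mapsto\langle f(sy),sy\rangle_{L^2(0,1)}/s$, which follows from the quadratic bound in $s$ coming from Hypothesis \ref{ipo f 2div}.3), and the fact that the scaling $s\mapsto sy$ stays in $W^1_a(0,1)$, which is immediate by linearity of the space.
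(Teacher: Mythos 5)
Your proof is correct and follows exactly the route the paper intends: the paper omits this proof as being analogous to Lemma \ref{Lemma 3.1}, and your argument is precisely that proof transposed to the unweighted $L^2(0,1)$ pairing with $\|\sqrt{a}y'\|_{L^2(0,1)}$ in place of $\|y''\|_{L^2(0,1)}$. The only (shared) subtlety is that Hypothesis \ref{ipo f 2div}.3 gives a one-sided bound on $\langle f(u),u\rangle_{L^2(0,1)}$ while the conclusion bounds an absolute value, but this is exactly how the paper itself argues in Lemma \ref{Lemma 3.1}, so your proposal matches the paper's proof.
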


Under the well posedness  Hypothesis \ref{ipo Mbe+WP 2div}.2 and using the function $F$, we can define the energy associated to \eqref{(P_2div)} in the following way:
\begin{Definition}
	Let $y$ be a mild solution of (\ref{(P_2div)}) and define its energy  as
	\begin{equation*}
		\begin{aligned}
			E_y(t)&:=\frac{1}{2}\int_0^1 \Biggl (y^2_t(t,x)+a(x)y^2_{x}(t,x) \Biggr )dx+\frac{\beta}{2}a(1)y^2(t,1)\\
			&-\int_0^1F(y(t,x)) dx+\frac{1}{2}\int_{t-\tau}^t |k(s+\tau)|\cdot \norm{B^*y_t(s)}^2_{H}ds,\quad\,\,\,\,\,\,\forall\;t\ge 0.
		\end{aligned}
	\end{equation*}
\end{Definition}
The next estimate holds.
\begin{Theorem}\label{th E 2div}
	Assume Hypothesis \ref{ipo Mbe+WP 2div}.2, $a$ (WD) or (SD)	and let $y$ be a mild solution to \eqref{(P_2div)} defined on $[0, T).$ If $E_y(t)\ge\frac{1}{4}\norm{y_t(t)}^2_{L^2(0,1)}$ for any $t\in [0, T),$ then
	\begin{equation*}
		E_y(t)\le C(t)E_y(0)\,\,\,\,\,\,\,\,\,\,\forall\; t\in [0, T),
	\end{equation*}
	where $C(\cdot)$ is the function defined in \eqref{function C}.
\end{Theorem}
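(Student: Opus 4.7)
The plan is to mirror the arguments used in Theorem \ref{th E} and Theorem \ref{th E div}, adapting them to the second-order divergence setting. I would first formally differentiate $E_y(t)$ in $t$, obtaining four kinds of contributions: the bulk terms $\int_0^1 (y_t y_{tt} + a y_x y_{tx})\,dx$, the boundary term $\beta a(1) y(t,1) y_t(t,1)$, the nonlinear term $-\int_0^1 f(y)y_t\,dx$, and the delay contribution $\frac{1}{2}|k(t+\tau)|\,\|B^*y_t(t)\|_H^2 - \frac{1}{2}|k(t)|\,\|B^*y_t(t-\tau)\|_H^2$.

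Next I would integrate by parts in $\int_0^1 a y_x y_{tx}\,dx$ using the Gauss--Green formula valid for $W^1_a(0,1)$ (see \cite{ACL}), producing $[a(1) y_x(t,1) y_t(t,1)] - \int_0^1 (ay_x)_x y_t\,dx$; the degenerate boundary condition at $x=0$ (either $y(t,0)=0$ in the (WD) case or $\lim_{x\to0}(ay_x)(t,x)=0$ in the (SD) case) kills the boundary contribution at $0$. Combining with the PDE in \eqref{(P_2div)}, the bulk $y_{tt}$ and $(ay_x)_x$ terms give rise to $-k(t)\langle BB^*y_t(t-\tau),y_t(t)\rangle_{L^2(0,1)}$ plus the nonlinear term, which cancels the $-\int_0^1 f(y)y_t\,dx$ contribution. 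The remaining boundary contributions $\beta a(1) y(t,1)y_t(t,1) + a(1) y_x(t,1) y_t(t,1)$ are rearranged using the dissipative boundary condition $\beta y(t,1)+y_x(t,1)+y_t(t,1)=0$, which yields the negative term $-a(1) y_t^2(t,1) \le 0$.

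The outcome, exactly as in the earlier analogues, is
\begin{equation*}
\frac{dE_y(t)}{dt} \le -a(1) y_t^2(t,1) - k(t)\langle B^*y_t(t), B^*y_t(t-\tau)\rangle_H + \tfrac{1}{2}|k(t+\tau)|\,\|B^*y_t(t)\|_H^2 - \tfrac{1}{2}|k(t)|\,\|B^*y_t(t-\tau)\|_H^2.
\end{equation*}
Dropping the nonpositive boundary dissipation and applying the elementary Cauchy inequality $|k(t)\langle B^*y_t(t), B^*y_t(t-\tau)\rangle_H| \le \frac{1}{2}|k(t)|\,\|B^*y_t(t)\|_H^2 + \frac{1}{2}|k(t)|\,\|B^*y_t(t-\tau)\|_H^2$ eliminates the delayed term and leaves
\begin{equation*}
\frac{dE_y(t)}{dt} \le \tfrac{1}{2}(|k(t)|+|k(t+\tau)|)\,\|B^*y_t(t)\|_H^2 \le 2b^2(|k(t)|+|k(t+\tau)|)\cdot \tfrac{1}{4}\|y_t(t)\|_{L^2(0,1)}^2,
\end{equation*}
where the bound $\|B^*\|_{\mathcal L(L^2(0,1),H)}=b$ from \eqref{b'} is used.

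Finally, the hypothesis $E_y(t)\ge \frac{1}{4}\|y_t(t)\|_{L^2(0,1)}^2$ converts this into $\frac{d}{dt}E_y(t) \le 2b^2(|k(t)|+|k(t+\tau)|)E_y(t)$, and Gronwall's lemma yields $E_y(t)\le C(t) E_y(0)$ with $C(\cdot)$ as in \eqref{function C}. The main obstacle, as in the other extensions, is making sure the boundary terms at $x=0$ truly vanish under the (WD)/(SD) dichotomy and that the correct Gauss--Green formula from \cite{ACL} applies to mild (not just classical) solutions; this is handled by a density argument, approximating $(y^0,y^1)\in\mathcal N_0$ by elements of $D(\mathcal M_d)$ so that the formal computation is rigorous on the approximants and passing to the limit in the integrated Gronwall inequality.
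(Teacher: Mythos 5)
Your proposal is correct and follows essentially the same route as the paper: formal differentiation of $E_y$, integration by parts via the Gauss--Green formula with the boundary conditions producing the dissipative term $-a(1)y_t^2(t,1)$, substitution of the PDE to cancel the nonlinear term and isolate the delay coupling, the Cauchy inequality to absorb the delayed term, and Gronwall's lemma. The closing remark on justifying the formal computation for mild solutions by density is a reasonable addition that the paper leaves implicit.
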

\begin{proof}
	Let $y$ be a solution to \eqref{(P_2div)}. Proceeding as before, one has	\begin{equation*}
		\begin{aligned}
			\frac{dE_y(t)}{dt}&= \int_0^1\Big (
			y_t(t,x)y_{tt}(t,x) +a(x)y_{x}(t,x)y_{xt}(t,x)\Big ) dx+\beta a(1)y(t,1)y_t(t,1)\\
			&-\int_0^1f(y(t,x))y_t(t,x) dx+\frac{1}{2}|k(t+\tau)|\cdot \norm{B^*y_t(s)}^2_{H}\\&-\frac{1}{2}|k(t)|\cdot \norm{B^*y_t(t-\tau)}^2_{H}\\
			&= \int_0^1\Big (
			y_t(t,x)y_{tt}(t,x) -(ay_{x})_{x}(t,x)y_{t}(t,x)\Big ) dx\\
			&+(ay_{x})(t,1)y_t(t,1)+a(1)y_t(t,1)[-y_x(t,1)-y_t(t,1)]\\
			&-\int_0^1f(y(t,x))y_t(t,x) dx+\frac{1}{2}|k(t+\tau)|\cdot \norm{B^*y_t(s)}^2_{H}\\&-\frac{1}{2}|k(t)|\cdot \norm{B^*y_t(t-\tau)}^2_{H}\\
			&= \int_0^1\Big (
			y_t(t,x)y_{tt}(t,x) -(ay_{x})_{x}(t,x)y_{t}(t,x)\Big ) dx -a(1)y_t^2(t,1)\\
			&-\int_0^1f(y(t,x))y_t(t,x) dx+\frac{1}{2}|k(t+\tau)|\cdot \norm{B^*y_t(s)}^2_{H}\\&-\frac{1}{2}|k(t)|\cdot \norm{B^*y_t(t-\tau)}^2_{H}.
		\end{aligned}
	\end{equation*}
	Since, also in this case, one can prove that
	\begin{equation*}
		\frac{dE_y(t)}{dt}\le 2b^2(|k(t+\tau)|+|k(t)|)E_y(t),
	\end{equation*}
the thesis follows as in Theorem \ref{th E}.
\end{proof}

In this case Proposition \ref{Prop E} becomes

\begin{Proposition}\label{Prop E 2div}
	Assume Hypothesis \ref{ipo f 2div} and $a$ (WD) or (SD). Take $T>0$ and let $Y$ be a non trivial solution of the abstract problem associated to \eqref{(P_2div)}, with initial data $Y^0\in\mathcal{N}_0$ and $\psi\in \mathcal{C}([-\tau,0]; \mathcal{N}_0),$ defined on $[0,\delta),$ with $\delta\le T$. The following statements hold: 
	\begin{enumerate}
		\item if $h\bigl (\norm{\sqrt{a}(y^0)'}_{L^2(0,1)}\bigr )<\frac{1}{2}$, then $E_y(0)>0$;
		\item if $h\bigl (\norm{\sqrt{a}(y^0)'}_{L^2(0,1)}\bigr )<\frac{1}{2}$ and  $h(2\sqrt{C(T)E_y(0)})<\frac{1}{2}$, then
		\begin{equation*}
			\begin{array}{l}
				\displaystyle{E_y(t)>\frac{1}{4}\norm{y_t(t)}^2_{L^2(0,1)}+\frac{1}{4}\norm{\sqrt{a}y_{x}(t)}^2_{L^2(0,1)}+\frac{\beta}{4}a(1)y^2(t,1)}
				\\\displaystyle{
					\hspace{5,5 cm}+\frac{1}{4}\int_{t-\tau}^t |k(s+\tau)|\cdot \norm{B^*y_t(s)}^2_{H}ds}
			\end{array}
		\end{equation*}
		for all $t\in [0,\delta)$, being $C(\cdot)$ the function defined in \eqref{function C}. In particular,
		\begin{equation*}
			E_y(t)>\frac{1}{4}\norm{Y(t)}^2_{\mathcal{N}_0},\,\,\,\,\,\,\,\,\,\,\forall\; t\in [0,\delta).
		\end{equation*}
	\end{enumerate}
\end{Proposition}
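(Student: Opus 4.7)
The plan is to mimic the proof of Proposition \ref{Prop E}, with the spaces adjusted to the present divergence-form setting: the controlling (elastic) norm becomes $\|\sqrt{a}y'\|_{L^2(0,1)}$, the velocity lives in $L^2(0,1)$, the boundary term carries the extra factor $a(1)$, and the nonlinear estimate is the Lemma stated just above (the divergence analogue of Lemma \ref{Lemma 3.1}). Beyond these bookkeeping changes, the structure of the argument is unchanged.

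For Claim 1, I apply the Lemma with $y=y^0$; the assumption $h(\|\sqrt{a}(y^0)'\|_{L^2(0,1)})<\frac{1}{2}$ immediately gives
\[
\Big|\int_0^1 F(y^0)\,dx\Big| < \frac{1}{4}\|\sqrt{a}(y^0)'\|^2_{L^2(0,1)}.
\]
Inserting this into the definition of $E_y(0)$ leaves a strict surplus on each remaining term, so
\[
E_y(0) > \frac{1}{4}\|y^1\|^2_{L^2(0,1)} + \frac{1}{4}\|\sqrt{a}(y^0)'\|^2_{L^2(0,1)} + \frac{\beta}{4}a(1)(y^0(1))^2 + \frac{1}{4}\int_{-\tau}^0|k(s+\tau)|\,\|g(s)\|^2_H\,ds,
\]
and non-triviality of $Y$ forces the right-hand side to be strictly positive, whence $E_y(0)>0$.

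For Claim 2, I argue by contradiction. Set
\[
r:=\sup\Big\{s\in[0,\delta) : \text{the stated strict lower bound holds for every } t\in[0,s)\Big\},
\]
and assume $r<\delta$. Continuity of $E_y$ and of the component norms forces equality in the lower bound at $t=r$, so in particular $\tfrac{1}{4}\|y_t(r)\|^2_{L^2(0,1)}\le E_y(r)$ and $\tfrac{1}{4}\|\sqrt{a}y_x(r)\|^2_{L^2(0,1)}\le E_y(r)$. The kinetic inequality lets me invoke Theorem \ref{th E 2div} on $[0,r]$, yielding $E_y(r)\le C(T)E_y(0)$. Combining this with the elastic inequality and the monotonicity of $h$, the standing hypothesis $h(2\sqrt{C(T)E_y(0)})<\tfrac{1}{2}$ gives
\[
h(\|\sqrt{a}y_x(r)\|_{L^2(0,1)})\le h\bigl(2\sqrt{E_y(r)}\bigr)\le h\bigl(2\sqrt{C(T)E_y(0)}\bigr)<\frac{1}{2}.
\]
Applying the Lemma at $t=r$ and substituting the resulting bound for $|\int_0^1 F(y(r,\cdot))\,dx|$ into the definition of $E_y(r)$ reproduces, exactly as in Claim 1, a strict inequality that contradicts the equality just obtained by continuity. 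Hence $r=\delta$ and the lower bound holds on all of $[0,\delta)$; the \emph{in particular} assertion follows by recognising the right-hand side as $\tfrac{1}{4}\|Y(t)\|^2_{\mathcal{N}_0}$.

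The only delicate point is the self-consistent use at $t=r$ of both the kinetic bound (needed to trigger Theorem \ref{th E 2div}) and the elastic bound (needed as the argument of the monotone function $h$); these are supplied simultaneously by the continuity-at-$r$ equality. Once in hand, the feedback $h(\text{elastic})<\tfrac{1}{2}\Longrightarrow |\int F|<\tfrac{1}{4}(\text{elastic})$ closes the loop and restores strict inequality, just as in the non-divergence fourth-order case.
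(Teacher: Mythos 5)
Your proposal is correct and follows exactly the route the paper intends: the paper gives no separate proof for Proposition \ref{Prop E 2div}, stating only that it is the analogue of Proposition \ref{Prop E}, and your argument is precisely that proof transported to the divergence-form second-order setting (elastic norm $\|\sqrt{a}y'\|_{L^2(0,1)}$, velocity in $L^2(0,1)$, boundary weight $a(1)$, the divergence analogue of Lemma \ref{Lemma 3.1}, and Theorem \ref{th E 2div} in place of Theorem \ref{th E}). The contradiction argument at the supremum $r$, including the simultaneous use of the kinetic bound to trigger the Gronwall estimate and the elastic bound inside $h$, matches the paper's proof of Proposition \ref{Prop E} step for step.
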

As a consequence we have the last theorem.
\begin{Theorem}
	Assume Hypotheses \ref{ipo k}, \ref{ipo f 2div} and Hypothesis \ref{ipo Mbe+WP 2div}.1. Consider the abstract problem associated to \eqref{(P_2div)}, with initial data $Y^0\in\mathcal{N}_0$ and $\psi\in \mathcal{C}([-\tau,0];\mathcal{N}_0)$. Then the problem satisfies Hypothesis \ref{ipo Mbe+WP 2div}.2 and, if the initial data are sufficiently small, the corresponding solutions exist and decay exponetially according to the following law
	\begin{equation*}		\norm{Y(t)}_{\mathcal{N}_0}\le Me^\alpha \Biggl (	\norm{Y^0}_{\mathcal{N}_0}+\int_0^\tau e^{\omega s}|k(s)|\cdot \norm{\psi(s-\tau)}_{\mathcal{N}_0}ds \Biggr )e^{-(\omega -\omega'-ML(C_\rho))t},
	\end{equation*}
	for any $t\in (0, +\infty)$.
\end{Theorem}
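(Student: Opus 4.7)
The plan is to mirror the iterative bootstrap argument used in the proof of Theorem \ref{global}, replacing every ingredient by its divergence-form second-order counterpart developed earlier in this subsection: the semigroup $(V(t))_{t\ge 0}$ and its exponential decay bound in place of $(S(t))_{t\ge 0}$, the space $\mathcal N_0$ in place of $\mathcal H_0$, the energy lower bound provided by Proposition \ref{Prop E 2div}, the energy upper bound provided by Theorem \ref{th E 2div}, and the Duhamel-based exponential estimate (the analogue of Theorem \ref{Th Duhamel}, which was stated to still hold in the current setting).

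First, I would fix $T>0$ large enough so that
\begin{equation*}
C_T := 2M^2 e^{2\alpha} (1+\Lambda e^{\omega\tau}b^2)(1 + \Lambda e^{2\omega\tau}b^2) e^{-(\omega-\omega')T} \le 1,
\end{equation*}
and then pick $\rho>0$ with $\rho \le \frac{1}{2\sqrt{C(T)}} h^{-1}(1/2)$ and, eventually shrinking it further, with $L(C_\rho)<\frac{\omega-\omega'}{2M}$, where $C_\rho := 2\sqrt{C(T)}\rho$. For initial data $(Y^0,\psi)$ satisfying $\norm{Y^0}^2_{\mathcal N_0}+\int_0^\tau |k(s)|\cdot \norm{g(s-\tau)}^2_H\,ds \le \rho^2$, local existence on some interval $[0,\delta)$, with $\delta\le T$, follows from the analogue of Proposition \ref{th esist loc}, which remains valid in $\mathcal N_0$ thanks to the contraction semigroup $(V(t))_{t\ge 0}$.

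Next, I would verify the two smallness hypotheses of Proposition \ref{Prop E 2div}. The condition $h(\norm{\sqrt a (y^0)'}_{L^2(0,1)})<1/2$ is immediate from $\rho\le \frac{1}{2\sqrt{C(T)}}h^{-1}(1/2)$ together with the monotonicity of $h$. The condition $h(2\sqrt{C(T)E_y(0)})<1/2$ follows by first bounding $E_y(0)\le \rho^2$ via the $F$-estimate of the lemma preceding the definition of the energy, and then invoking monotonicity of $h$ again. Proposition \ref{Prop E 2div} then gives $E_y(0)>0$ and the strict lower bound $E_y(t) > \frac{1}{4}\norm{Y(t)}^2_{\mathcal N_0}$ on $[0,\delta)$. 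In particular $E_y(t)\ge \frac{1}{4}\norm{y_t(t)}^2_{L^2(0,1)}$, so Theorem \ref{th E 2div} applies and yields $E_y(t)\le C(T)E_y(0)$. Combining the two bounds shows $\norm{Y(t)}_{\mathcal N_0}\le C_\rho$ on $[0,\delta)$, which both extends the solution to $[0,T]$ and verifies Hypothesis \ref{ipo Mbe+WP 2div}.2 on that interval. The Duhamel estimate (analogue of Theorem \ref{Th Duhamel}) then delivers the decay rate $e^{-\frac{\omega-\omega'}{2}t}$ on $[0,T]$; combined with H\"older and Hypothesis \ref{ipo k} to control $\int_0^\tau e^{\omega s}|k(s)|\cdot \norm{\psi(s-\tau)}_{\mathcal N_0}\,ds$ by $e^{\omega\tau}\sqrt\Lambda\,\rho b$, and with the defining choice of $C_T$, this gives
\begin{equation*}
\norm{Y(T)}^2_{\mathcal N_0}+\int_{T-\tau}^T |k(s+\tau)|\cdot \norm{B^*y_t(s)}^2_H\,ds \le C_T\rho^2\le \rho^2,
\end{equation*}
so the whole argument may be restarted on $[T,2T]$ with the state at $T$ playing the role of the new initial data. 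Iteration produces the unique global solution and the claimed exponential estimate.

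The only delicate point, as in Theorem \ref{global}, is coordinating the choice of constants: $T$ is selected first to enforce $C_T\le 1$, which is possible precisely because $(V(t))_{t\ge 0}$ decays exponentially; only afterwards is $\rho$ shrunk so that both $L(C_\rho)<\frac{\omega-\omega'}{2M}$ and the smallness conditions feeding Proposition \ref{Prop E 2div} remain valid at every iteration step. No new analytic difficulty arises beyond bookkeeping with the boundary term $\beta a(1)y^2(t,1)$ appearing in $\norm{\cdot}_{\mathcal N_0}$, since all the energy identities have already been derived in Theorem \ref{th E 2div}.
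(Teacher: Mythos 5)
Your proposal is correct and follows exactly the route the paper intends: the paper states this theorem without proof, presenting it as the direct analogue of Theorem \ref{global}, and your argument is precisely that bootstrap (local existence, the smallness checks feeding Proposition \ref{Prop E 2div}, the energy bounds from Theorem \ref{th E 2div}, the Duhamel decay estimate, and iteration on $[nT,(n+1)T]$) transplanted to $\mathcal{N}_0$ and the semigroup $(V(t))_{t\ge 0}$. The ordering of the constant choices ($T$ first, then $\rho$) matches the paper's scheme, so nothing further is needed.
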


\section{Acknowledgments}
This work was started while Alessandro Camasta was visiting the University of L'Aquila during his Ph.D. The authors thank the University of Bari Aldo Moro and the University of L'Aquila for this opportunity.

The authors are members of  {\it Gruppo Nazionale per l'Analisi Ma\-te\-matica, la Probabilit\`a e le loro Applicazioni (GNAMPA)} of the Istituto Nazionale di Alta Matematica (INdAM). They are partially supported by INdAM GNAMPA Project {\it ``Modelli differenziali per l'evoluzione del clima e i suoi impatti"} (CUP E53C22001930001). 

A. Camasta is also partially supported by the project {\it Mathematical models for interacting dynamics on networks (MAT-DYN-NET)
CA18232} and by INdAM GNAMPA Project {\it ``Analysis, control and inverse problems for evolution
equations arising in climate science"} (CUP E53C23001670001). He is also member of {\it UMI ``Modellistica Socio-Epidemiologica (MSE)''}

G. Fragnelli  is also partially supported by FFABR {\it Fondo per il finanziamento delle attivit\`a base di ricerca} 2017,   by INdAM GNAMPA Project {\it ``Analysis, control and inverse problems for evolution
equations arising in climate science"} (CUP E53C23001670001), and by the project {\it Mathematical models for interacting dynamics on networks (MAT-DYN-NET)
CA18232}. She is also a member of {\it UMI ``Modellistica Socio-Epidemiologica (MSE)''} and {\it UMI "CliMath"}.

C. Pignotti is partially supported by PRIN 2022  (2022238YY5) {\it Optimal control problems: analysis,
approximation and applications}, PRIN-PNRR 2022 (P20225SP98) {\it Some mathematical approaches to climate change and its impacts}, and by INdAM GNAMPA Project {\it ``Modelli alle derivate parziali per interazioni multiagente non 
simmetriche"}(CUP E53C23001670001). She is also a member of {\it UMI "CliMath"}.

\end{document}